\renewenvironment{proof}[1][\proofname]{%
  \par\pushQED{\qed}\normalfont
  \topsep6\p@\@plus6\p@\relax
  \trivlist
  \item[\hskip\labelsep\textbf{#1}\@addpunct{.}]% 改成 \textbf
}{%
  \popQED\endtrivlist\@endpefalse
}
\newcounter{step}
\pretocmd{\proof}{\setcounter{step}{0}}{}{}
\NewDocumentCommand{\step}{o}{%
  \refstepcounter{step}%
  \par\medskip
  \noindent\textbf{Step \thestep.}%
  \textbf{\IfValueT{#1}{~#1.}}%
  \par\medskip%
}
\def\amsartsection{\@startsection{section}{1}%
  \z@{2.5\linespacing\@plus\linespacing}{1\linespacing}%
  {\normalfont\scshape\centering}}
\def\section{\vspace*{-34pt}\vspace{\baselineskip}\let\section\amsartsection\section}
\crefname{step}{Step}{Steps}
\Crefname{step}{Step}{Steps}
\crefname{thm}{Theorem}{Theorems}
\Crefname{thm}{Theorem}{Theorems}
\crefname{prop}{Proposition}{Propositions}
\Crefname{prop}{Proposition}{Propositions}
\crefname{lem}{Lemma}{Lemmas}
\Crefname{lem}{Lemma}{Lemmas}
\crefname{cor}{Corollary}{Corollaries}
\Crefname{cor}{Corollary}{Corollaries}
\crefname{rmk}{Remark}{Remarks}
\Crefname{rmk}{Remark}{Remarks}
\crefname{defn}{Definition}{Definitions}
\Crefname{defn}{Definition}{Definitions}
\crefname{section}{Section}{Sections}
\Crefname{section}{Section}{Sections}
\crefname{subsection}{Section}{Sections}
\Crefname{subsection}{Section}{Sections}
\crefname{subsubsection}{Section}{Sections}
\Crefname{subsubsection}{Section}{Sections}
\crefname{appendix}{Appendix}{Appendices}
\Crefname{appendix}{Appendix}{Appendices}
\newcommand{\lh}{\,\rule[-0.05em]{.8em}{.03em}\rule[-.05em]{.03em}{.6em}\,}
\let\div\relax
\DeclareMathOperator{\div}{\operatorname{div}}
\DeclareMathOperator{\curl}{\operatorname{curl}}
\DeclareMathOperator{\tr}{\operatorname{tr}}
\newtheorem{thm}{Theorem}[section]
\newtheorem{lem}[thm]{Lemma}
\newtheorem{cor}[thm]{Corollary}
\newtheorem{prop}[thm]{Proposition}
\newtheorem{defn}[thm]{Definition}
\newtheorem{rmk}[thm]{Remark}
\newtheorem*{clm}{Claim}
\numberwithin{equation}{section}
\title{The quantitative behavior of \texorpdfstring{\( \alpha \)}{α}-Yang-Mills-Higgs fields on surfaces}   
\date{\today}
\keywords{\texorpdfstring{\( \alpha \)}{α}-Yang-Mills-Higgs fields; Energy-identity; no-neck property; hidden Jocobian structure; Pohozaev equality}
\subjclass[2020]{58E15\textbullet 53C43}
\author{Wanjun Ai}
\email{wanjunai@swu.edu.cn}
\address{School of Mathematics and Statistics\\
  Southwest University\\
  Chongqing\\
  400715\\
  P. R. China
}
\author{Jiayu Li}
\email{jiayuli@ustc.edu.cn}
\address{School of Mathematical Sciences\\
  University of Science and Technology of China\\
  Hefei 230026\\
  P. R. China
}
\author{Miaomiao Zhu}
\email{mizhu@sjtu.edu.cn}
\address{
  School of Mathematical Sciences\\
  Shanghai Jiao Tong University\\
  800 Dongchuan Road\\
  Shanghai 200240\\
  P. R. China
}
\begin{document}
\begin{abstract}
  We investigate the blow-up behavior of \( \alpha \)-Yang--Mills--Higgs (\( \alpha \)-YMH) fields over closed Riemannian surfaces with the target fiber \( F = S^{K-1} \subset \mathbb{R}^K \) being the round sphere, focusing on the establishment of the \( \alpha \)-energy identity and the no-neck property during the bubbling process. A central innovation is the identification of a hidden Jacobian structure through Hodge decomposition and a new conservation law. Furthermore, we derive a Pohozaev-type identity for \( \alpha \)-YMH fields, which enables refined control of the energy density. Together, these advances ensure the validity of the \( \alpha \)-energy identity as \( \alpha \to 1 \). Our analysis further sharpens the Lorentz space estimates from the \( L^{2,\infty} \) to the optimal \( L^{2,1} \) scale, ultimately yielding the no-neck property in the blow-up regime. These results provide a unified and quantitative framework for understanding singularity formation in variational gauge theories.
\end{abstract}
\maketitle
\section{Introduction}\label{sec:intro}
Let \( \Sigma \) be a Riemannian manifold with metric \( g \), \( G \) be a compact connected Lie group with Lie algebra \( \mathfrak{g} \), and \( \mathcal{P} \) be a \( G \)-principal bundle over \( \Sigma \). Let \( F \) be a Riemannian manifold admitting a \( G \)-action, and let \( \mathcal{F} = \mathcal{P} \times_G F \) be the associated fiber bundle. A generalized Higgs potential \( \mu \) is a smooth gauge-invariant function on \( \mathcal{F} \). Let \( \mathscr{S} \) and \( \mathscr{A} \) denote the space of smooth sections of \( \mathcal{F} \) and the affine space of smooth connections on \( \mathcal{P} \), respectively. The Yang--Mills--Higgs (YMH) functional is defined by
\[
  \mathcal{L}(A,\phi):=\frac{1}{2}\int_\Sigma\left(  \lvert \nabla _A\phi \rvert^2 +\lvert F_A \rvert^2+\lvert \mu(\phi) \rvert^2 \right)dv_g,\quad (A,\phi)\in \mathscr{A}\times \mathscr{S},
\]
where \( \nabla_A \) is the covariant differential induced by \( A \), and \( F_A \) is the corresponding curvature 2-form, i.e., \( F_A \in \Omega^2(\mathfrak{g}_{\mathcal{P}}) \), with \( \mathfrak{g}_{\mathcal{P}}\mathpunct{:}= \mathcal{P} \times_{\mathrm{Ad}} \mathfrak{g} \). Critical points of \( \mathcal{L} \) are referred to as \emph{YMH fields}.

 YMH theory has played a foundational role not only in particle physics and quantum field theory \cites{JaffeTaubes1980Vortices,BethuelBrezisHelein1994GinzburgLandau,deGennes1999superconductivity}, but also in geometric analysis and differential geometry. Critical points of the YMH functional encompass a broad range of important geometric objects, including Ginzburg--Landau vortices, (pseudo-)holomorphic curves, (gauged) harmonic maps, and pure Yang--Mills fields \cites{BethuelBrezisHelein1994GinzburgLandau,CieliebakGaioRieraSalamon2002Symplectic,Riera2003Hamiltonian,LinYang2003Gauged,TianYang2002Compactification}. In particular, in two dimensions, the conformal invariance of the Higgs energy term \( \lVert \nabla_A \phi \rVert_{L^2} \) makes the compactness and blow-up behavior of YMH fields central to understanding the structure of their moduli spaces \cites{RieraTian2009Compactification,Song2016Convergence}. Such compactness results are essential, for instance, in defining Hamiltonian Gromov--Witten (HGW) invariants \cite{Riera2003Hamiltonian}.

On the other hand, the existence theory for general critical points of the YMH functional over Riemannian surfaces can be approached via a min--max framework applied to the perturbed \( \alpha \)-YMH functional, inspired by the seminal work of Sacks and Uhlenbeck \cite{SacksUhlenbeck1981existence} on \( \alpha \)-harmonic maps. More precisely, consider the \( \alpha \)-Yang--Mills--Higgs (\( \alpha \)-YMH) functional defined as
\[
  \mathcal{L}_\alpha(A,\phi):=\frac{1}{2}\int_\Sigma\left( \left( 1+\lvert \nabla _A\phi \rvert^2 \right)^\alpha - 1 + \lvert F_A \rvert^2 + \lvert \mu(\phi) \rvert^2 \right)dv_g,\quad (A,\phi)\in \mathscr{A} \times \mathscr{S}.
\]
Critical points of \( \mathcal{L}_\alpha \) are called \emph{\( \alpha \)-YMH fields} and satisfy the following Euler--Lagrange equations on \( \Sigma \):
\begin{equation}\label{eq:alpha-YMH}
  \begin{cases}
    \nabla _A^*\left( \alpha(1+\lvert \nabla _A\phi \rvert^2)^{\alpha-1}\nabla _A\phi \right)+\mu(\phi)\cdot \nabla \mu(\phi)=0,\\
    D_A^*F_A+\alpha(1+\lvert \nabla _A\phi \rvert^2)^{\alpha-1}\phi^*\nabla _A\phi=0,
  \end{cases}
\end{equation}
where \( D_A \) is the covariant exterior differential and \( D_A^* \) is its adjoint. The term \( \phi^*\nabla_A\phi \) takes values in the dual space of \( \Omega^1(\mathfrak{g}_{\mathcal{P}}) \); that is, for any \( b \in \Omega^1(\mathfrak{g}_{\mathcal{P}}) \),
\[
  \langle \phi^*\nabla _A\phi,b \rangle=\langle \nabla _A\phi,b\phi \rangle.
\]
We refer to\cites{Song2011Critical,AiSongZhu2019boundary} for more details.

A key ingredient in the theory is again the compactness of critical points of the \( \alpha \)-YMH functional. Clearly, when \( \alpha=1 \), the \( \alpha \)-YMH fields reduce to YMH fields. Moreover, locally, when \( A \) is trivial and the Higgs potential \( \mu = 0 \), the \( \alpha \)-YMH functional reduces to the classical Dirichlet energy, and \eqref{eq:alpha-YMH} coincides with the equation for \( \alpha \)-harmonic maps.

Under various conditions, there is no energy loss during the bubbling convergence of \( \alpha \)-harmonic maps with uniformly bounded energy to their weak limits--i.e., the energy identity holds when blow-up occurs; see \citelist{\cite{Jost1991Two}\cite{ChenTian1999Compactification}\cite{Moore2007Energy}\cite{Moore2017Introduction}\cite{LiWang2010Weak}\cite{Lamm2010Energy}\cite{LiLiuWang2017Blowup}\cite{LiZhu2019Energy}\cite{JostLiuZhu2022Asymptotic}\cite{LiuZhu2024Asymptotic}}. In particular, when the target manifold is a round sphere, the high degree of symmetry of the target ensures that both the energy identity and the no-neck property hold \cite{LiZhu2019Energy}. 

Motivated by these developments, the compactness of \( \alpha \)-YMH fields on Riemannian surfaces has been studied. By imposing some entropy-type condition analogous to the \( \alpha \)-harmonic map setting, the energy identity of \( \alpha \)-YMH fields on closed Riemannian surfaces was established in \cite{Song2011Critical}. The free boundary value problem for \( \alpha \)-YMH fields on Riemannian surfaces has also been considered, where bubbling convergence is achieved, possibly involving the splitting off of harmonic discs with free boundary near points of energy concentration on the boundary \cite{AiSongZhu2019boundary}.

In this paper, we focus on the blow-up behavior of \( \alpha \)-Yang--Mills--Higgs fields on closed Riemannian surfaces, aiming to establish the energy identity and no-neck property under different assumptions. We consider a general sequence of \( \alpha \)-YMH fields \( \{ (A_\alpha, \phi_\alpha) \} \) with uniformly bounded \( \alpha \)-energy. As shown in \cite{Song2011Critical}, there exists a subsequence \( \alpha_n \to 1 \) and a sequence of gauge transformations \( S_n \) such that \( S_n^* A_{\alpha_n} \) converges to a limiting connection \( A_\infty \), and \( S_n^* \phi_{\alpha_n} \) converges to a limiting section \( \phi_\infty \), away from a finite blow-up set \( \mathcal{S} \subset \Sigma \). Moreover, \( (A_\infty, \phi_\infty) \) extends to a smooth YMH field on all of \( \Sigma \), and at each blow-up point \( x \in \mathcal{S} \), a nontrivial harmonic sphere may appear. It is standard in this setting to establish the following foundational result:
\begin{prop}[\cites{Song2011Critical}]\label{prop:bubbing}
  Suppose \( (A_\alpha,\phi_\alpha)\in \mathscr{A}\times \mathscr{S} \) is a sequence of smooth \( \alpha \)-YMH fields with \( \mathcal{L}_\alpha(A_\alpha,\phi_\alpha)\leq\Lambda<+\infty \). Then,
  \begin{enumerate}
    \item  \label{item:prop-small-energy-I} the set
      \[
	\mathcal{S}:=\left\{ x\in\Sigma:\lim_{r\to0}\liminf_{\alpha\to1}\int_{D_r(x)}\lvert \nabla_{A_\alpha}\phi_\alpha \rvert^2\geq\epsilon_0 \right\}
      \]
      is a set of at most finitely many points;
    \item \label{item:prop-small-energy-II} as \( \alpha\to1 \), there exists a subsequence of \( \left\{ (A_\alpha,\phi_\alpha) \right\} \) (which we will not distinguish) and a sequence of smooth gauge transformations \( \left\{ S_\alpha \right\} \), such that
      \[
	S_\alpha^*A_\alpha\to A_\infty\in C_{\mathrm{loc}}^\infty(\Sigma\setminus \mathcal{S})\cap C^0(\Sigma),\quad
	S_\alpha^*\phi_\alpha\to \phi_\infty\in C_{\mathrm{loc}}^\infty(\Sigma\setminus \mathcal{S}),
      \]
      and \( (A_\infty,u_\infty) \) extends to a smooth YMH fields on \( \Sigma \);
    \item \label{item:prop-small-energy-III}for each \( x^j\in \mathcal{S} \), there exists a sequence of points \( \left\{ x_\alpha^j \right\}\subset\Sigma \), a sequence of positive numbers \( \left\{ r_\alpha^j \right\} \) and a non-trivial harmonic spheres \( \omega^j:S^2\to F \) such that
      \begin{enumerate}
	\item \( x_\alpha^j\to x^j \) and \( r_\alpha^j\to 0 \) as \( \alpha\to1 \);
	\item for any \( i\neq j \), we have
	  \[
	    \lim_{\alpha\to1}\left( \frac{r_\alpha^i}{r_\alpha^j}+\frac{r_\alpha^j}{r_\alpha^i}+\frac{\lvert x_\alpha^i-x_\alpha^j \rvert}{r_\alpha^i+r_\alpha^j} \right)=+\infty;
	  \]
	\item there are at most finite many harmonic spheres \( \left\{ \omega^j \right\} \).
      \end{enumerate}
  \end{enumerate}
\end{prop}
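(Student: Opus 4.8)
The plan is to carry out a Sacks--Uhlenbeck-type bubbling analysis, resting on two pillars that must be made uniform as \( \alpha \to 1 \): a small-energy (\( \epsilon \)-)regularity estimate and Uhlenbeck's Coulomb gauge fixing. The first step is to establish the uniform \( \epsilon \)-regularity: there is \( \epsilon_0 > 0 \) so that whenever \( \int_{D_{2r}(x)} \lvert \nabla_{A_\alpha}\phi_\alpha \rvert^2 < \epsilon_0 \) on a small disc, one may fix a Coulomb gauge and bootstrap the elliptic system \eqref{eq:alpha-YMH} to \( C^k \) bounds on \( D_r(x) \) that are independent of \( \alpha \) near \( 1 \). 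Two features make this possible: the factor \( \alpha(1+\lvert \nabla_{A_\alpha}\phi_\alpha \rvert^2)^{\alpha-1} \to 1 \), so the leading operator is a perturbed Laplacian; and in two dimensions Yang--Mills is subcritical (the critical dimension being \( 4 \)), so the connections enjoy uniform gauge control and do not concentrate. Consequently all genuine energy concentration is carried by the conformally invariant Higgs energy, which is precisely why \( \mathcal S \) is defined through it.

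Granting this, statement~(\ref{item:prop-small-energy-I}) follows: at every \( x \notin \mathcal S \) the small-energy hypothesis holds on some disc, so only points where at least \( \epsilon_0 \) of Higgs energy concentrates can lie in \( \mathcal S \), and the uniform bound \( \mathcal L_\alpha \le \Lambda \) forces \( \# \mathcal S \le C\Lambda/\epsilon_0 < \infty \). For statement~(\ref{item:prop-small-energy-II}), the uniform estimates furnish, on each compact \( K \Subset \Sigma \setminus \mathcal S \), gauge transformations \( S_\alpha \) together with \( \alpha \)-independent \( C^k \) bounds on \( S_\alpha^* A_\alpha \) and \( S_\alpha^* \phi_\alpha \); a patching argument over an exhaustion of \( \Sigma \setminus \mathcal S \), combined with Arzel\`a--Ascoli, extracts a subsequence converging in \( C^\infty_{\mathrm{loc}}(\Sigma \setminus \mathcal S) \) to a limit \( (A_\infty, \phi_\infty) \) that solves the \( \alpha = 1 \) (YMH) equations there, with the global \( C^0(\Sigma) \) regularity coming from a continuous choice of gauge. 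Since the limit has finite energy and is smooth off finitely many points, a removable-singularity theorem---Uhlenbeck's for the connection and the Sacks--Uhlenbeck/Rivi\`ere removability for the sphere-valued Higgs field---extends \( (A_\infty, \phi_\infty) \) to a smooth YMH field on all of \( \Sigma \).

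For statement~(\ref{item:prop-small-energy-III}), at each \( x^j \in \mathcal S \) the standard blow-up applies. I would choose points \( x_\alpha^j \to x^j \) and scales \( r_\alpha^j \to 0 \) realizing the concentration, normalized so that the Higgs energy of the rescaled fields \( \tilde\phi_\alpha(y) = \phi_\alpha(x_\alpha^j + r_\alpha^j y) \) on the unit disc equals a fixed fraction of \( \epsilon_0 \). The Higgs energy is scale invariant, while the rescaled connections \( \tilde A_\alpha(y) = r_\alpha^j A_\alpha(x_\alpha^j + r_\alpha^j y) \) tend to zero, so the covariant derivative degenerates to the ordinary one and the curvature and potential terms enter the rescaled equation with positive powers of \( r_\alpha^j \); together with \( \alpha(1+\lvert \nabla\tilde\phi_\alpha \rvert^2)^{\alpha-1} \to 1 \) this forces the rescaled fields to converge (after gauge fixing and removal of singularities) to a nonconstant finite-energy harmonic map \( \omega^j: \mathbb R^2 \to F \), which compactifies to a harmonic sphere \( \omega^j : S^2 \to F \). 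The separation conditions in (b) are arranged by always selecting the largest remaining concentration scale, so distinct bubbles live at asymptotically incomparable scales or locations; and since each bubble carries at least \( \epsilon_0 \) of energy while the total is at most \( \Lambda \), only finitely many arise, giving (c). The main obstacle throughout is the first step---producing \( \epsilon \)-regularity and gauge estimates that are genuinely uniform as \( \alpha \to 1 \) and compatible with the rescaling, since it is there that the degeneracy of the \( \alpha \)-YMH operator and the gauge dependence of \( A_\alpha \) must be controlled simultaneously.
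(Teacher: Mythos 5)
Your proposal is correct and follows essentially the same route as the paper: the paper likewise reduces items (1) and (2) to the uniform Coulomb-gauge small-energy estimates (\cref{prop:small-energy-estimate}, quoted from \cite{Song2011Critical}), and obtains item (3) by rescaling at each concentration point and applying the rescaled small-energy estimates (\cref{prop:small-energy-estimates-scaled}); your observation that the curvature and potential terms enter the rescaled equation with positive powers of \( r_\alpha^j \) is exactly what the paper formalizes as the ``generalized \( \alpha \)-YMH functional'' \( \mathcal{L}_{\alpha,\epsilon_\alpha} \). The only point handled more carefully there is that the weight in the rescaled equation is \( \alpha\left(\lambda^2+\lvert\nabla_{A_\lambda}u_\lambda\rvert^2\right)^{\alpha-1} \) together with the factor \( r_\alpha^{2-2\alpha} \), which at this stage is only known to be \emph{bounded} (it follows from your energy normalization of the scales, H\"older's inequality and \( \mathcal{L}_\alpha\leq\Lambda \)), not convergent to \( 1 \) --- whence the hypothesis \( \lim_{\alpha\to1}\epsilon_\alpha^{\alpha-1}\in(\beta_0,1] \) in \cref{prop:small-energy-estimates-scaled} --- but this does not affect your conclusion that the bubble is a harmonic sphere.
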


The main goal of this work is to establish that, when the domain is a closed Riemannian surface and the target fiber \( F = S^{K-1} \subset \mathbb{R}^K \) is the round sphere, both the energy identity and the no-neck property hold for sequences of \( \alpha \)-YMH fields with uniformly bounded energy. This extends key techniques from \( \alpha \)-harmonic map theory to the gauge-invariant, curvature-coupled setting of \( \alpha \)-YMH theory.

To formulate our result, we define for any open set \( \Omega \subset \mathbb{R}^2 \) the classical and gauged energy functionals by
\[
  E(u, \Omega)    \mathpunct{:}= \frac{1}{2} \int_\Omega \lvert du \rvert^2 \, dv_g, \qquad
  E(A, u; \Omega) \mathpunct{:}= \frac{1}{2} \int_\Omega \lvert \nabla_A u \rvert^2 \, dv_g,
\]
where \( u : \Omega \to S^{K-1} \) and \( A \) is a connection on the trivial bundle over \( \Omega \).
\begin{thm}\label{thm:main}
  Suppose \( (A_\alpha, u_\alpha) \in \mathscr{A} \times \mathscr{S} \) is a sequence of smooth local \( \alpha \)-YMH fields over the unit disc \( D \), satisfying \( \mathcal{L}_\alpha(A_\alpha, u_\alpha) \leq \Lambda < +\infty \). Furthermore, assume that the fiber \( F \) of the associated bundle \( \mathcal{F} \) is \( S^{K-1}\subset \mathbb{R}^K \). Then the following properties hold:
  \begin{enumerate}
    \item \emph{energy identity}:
      \begin{equation}\label{eq:alpha-EI}
	\lim_{\alpha\to1}\mathcal{L}_\alpha(A_\alpha,u_\alpha;D)=\mathcal{L}(A_\infty,u_\infty;D)+\sum_{j}E(\omega^j;\mathbb{R}^2).
      \end{equation}
    \item \emph{no neck property}: for all \( j \),
      \begin{equation}\label{eq:NN}
	\lim_{\delta\to0}\lim_{R\to\infty}\lim_{\alpha\to1}\mathrm{osc}_{D_\delta(x^j)\setminus D_{r_\alpha^j R}(x_\alpha^j)}u_\alpha=0.
      \end{equation}
  \end{enumerate}
\end{thm}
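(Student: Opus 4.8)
The plan is to reduce the global statement to a local neck analysis around a single blow-up point and then to close the estimates using the sphere structure of the target. First I would invoke \Cref{prop:bubbing} to localize: away from the finite set \( \mathcal{S} \) the fields converge smoothly, so both \eqref{eq:alpha-EI} and \eqref{eq:NN} are really statements about the annular neck regions joining the base limit \( (A_\infty,u_\infty) \) to the bubbles \( \omega^j \). By the standard induction on the number of bubbles at a point (rescaling at the slowest bubble and subtracting it off), it suffices to treat a single bubble, so the problem becomes controlling the energy and the oscillation of \( u_\alpha \) on a neck \( A(Rr_\alpha^j,\delta):=D_\delta(x^j)\setminus D_{Rr_\alpha^j}(x_\alpha^j) \) in the iterated limit \( \alpha\to1 \), \( R\to\infty \), \( \delta\to0 \). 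Because in two dimensions the curvature energy \( \int\lvert F_{A_\alpha}\rvert^2 \) is subcritical under the bubble rescaling, it cannot concentrate; fixing a Coulomb (Uhlenbeck) gauge on the neck makes the connection coefficients small in the relevant scale-invariant norms, so the bubbles are harmonic spheres carried by the Higgs field and the first equation in \eqref{eq:alpha-YMH} becomes a perturbed \( \alpha \)-harmonic map system with gauge- and potential-dependent lower-order terms.

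The core of the argument is to uncover the conservation law promised by the sphere target. Writing \( u_\alpha=(u_\alpha^1,\dots,u_\alpha^K) \) with \( \lvert u_\alpha\rvert=1 \) and using the constraint, I would recast the Higgs equation in the antisymmetric form \( -\Delta u_\alpha^i=\sum_j \Omega^i_j\cdot\nabla u_\alpha^j+(\text{l.o.t.}) \), where the coupled potential \( \Omega^i_j=u_\alpha^i\,\nabla_{A_\alpha} u_\alpha^j-u_\alpha^j\,\nabla_{A_\alpha} u_\alpha^i \) is antisymmetric in \( i,j \). The key new ingredient is that, after absorbing the connection and the weight \( (1+\lvert\nabla_{A_\alpha}u_\alpha\rvert^2)^{\alpha-1} \), the potential \( \Omega \) is divergence-free modulo controllable terms; applying the Hodge decomposition on the annulus then exhibits the leading contributions as Jacobian determinants \( \nabla^\perp b\cdot\nabla u_\alpha \). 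This is the hidden Jacobian structure: Jacobians lie in the Hardy space, so by \( \mathcal{H}^1 \)--BMO duality and Wente-type estimates the associated potentials gain integrability and \( \nabla u_\alpha \) can be estimated in Lorentz spaces uniformly in \( \alpha \).

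With the Jacobian structure in hand I would establish quantitative neck decay. Here the Pohozaev identity for \eqref{eq:alpha-YMH} is central: testing the stress--energy tensor against the radial field on each circle yields an equality relating the radial and angular parts of \( \lvert\nabla_{A_\alpha}u_\alpha\rvert^2 \) together with explicit correction terms from the factor \( (1+\lvert\nabla_{A_\alpha}u_\alpha\rvert^2)^{\alpha-1} \), the curvature, and the potential \( \mu \). Combined with the \( \varepsilon \)-regularity theorem on a dyadic decomposition of the neck, this forces the energy on consecutive dyadic annuli to decay geometrically, so the total neck energy vanishes in the iterated limit; summing over the bubbles and using the smooth convergence off \( \mathcal{S} \) then gives the energy identity \eqref{eq:alpha-EI}. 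For the no-neck property one needs more than energy decay: I would upgrade the gradient control on the neck from \( L^{2,\infty} \) to the borderline \( L^{2,1} \) scale, again via the Jacobian and Wente estimates now fed with the decay input. Since an \( L^{2,1} \) bound on \( \nabla u_\alpha \) over the neck controls the oscillation of \( u_\alpha \) there, this yields \eqref{eq:NN}.

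The main obstacle I anticipate is twofold, and both difficulties are specific to the gauge-coupled, \( \alpha \)-perturbed setting rather than the classical harmonic map case. First, identifying the correct conservation law so that \( \Omega \) is genuinely divergence-free after incorporating the connection \( A_\alpha \) and the weight \( (1+\lvert\nabla_{A_\alpha}u_\alpha\rvert^2)^{\alpha-1} \): the naive antisymmetric potential from the sphere target acquires extra terms from the covariant derivative and from the \( \alpha \)-weight that must be shown either to cancel or to be absorbable into the Jacobian. Second, all Lorentz and Pohozaev estimates must be made uniform as \( \alpha\to1 \), since the weight tends to \( 1 \) only in a borderline sense and the \( \alpha \)-correction terms, while formally small, sit precisely at the scaling-critical threshold where the \( L^{2,1} \) improvement is delicate. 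Controlling these terms uniformly in \( \alpha \), so that the passage to the limit requires no additional entropy hypothesis, is where the bulk of the technical work will lie.
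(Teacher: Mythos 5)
Your outline reproduces the paper's overall architecture (single-bubble reduction, sphere-induced skew potential, Hodge decomposition, Wente/Lorentz estimates, a Pohozaev identity, and an \( L^{2,\infty}\to L^{2,1} \) upgrade for the oscillation), but the two load-bearing steps are left open, and one of them is set up in a way that would not close. First, the conservation law. You assert that the skew potential is ``divergence-free modulo controllable terms'' which should ``either cancel or be absorbable into the Jacobian.'' In fact it is neither: the correction neither cancels nor is a Jacobian. The paper's \cref{thm:conservation-law} (derived via Noether's theorem from invariance under the global gauge action of \( G\subset \mathrm{SO}(K) \)) shows that \( \div\bigl[f_\alpha\bigl([\nabla_A u]^i u^j-[\nabla_A u]^j u^i\bigr)\bigr] = -2\div\bigl(\tr[F_A,a]^i_j\bigr) \), an explicit curvature--connection term; see \eqref{eq:conservation-law-component}. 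Handling it requires a second Hodge component, \( f_\alpha\bigl(\nabla_{A_\alpha}u^i u^j - \nabla_{A_\alpha}u^j u^i\bigr)=\nabla^\perp G_{\alpha,1}+\nabla G_{\alpha,2} \) with \( \Delta G_{\alpha,2}=-2\div\tr[F_{A_\alpha},a_\alpha] \) as in \eqref{eq:u-nabla-u-hodege-decompo}--\eqref{eq:G-alpha-2}, and \( G_{\alpha,2} \) is estimated separately using the \( L^\infty \) control of \( a \) and \( F_A \) (available because the connection, unlike the map, enjoys small-energy regularity on the whole disc). Without identifying this exact structure the Wente scheme cannot be run, since the error is precisely at the critical scale.

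Second, and more serious, your plan never addresses the quantity \( \bar\mu:=\lim_{\alpha\to1} r_\alpha^{2-2\alpha}\geq 1 \). Even with perfect neck-energy decay, the bubble region contributes \( \bar\mu\, E(\omega^j) \) to \( \lim_{\alpha\to1}\mathcal{L}_\alpha \) (see \cref{prop:equivalence-of-EI}), so the identity \eqref{eq:alpha-EI} with coefficient \( 1 \) requires proving \( \bar\mu=1 \), i.e.\ \( (\alpha-1)\ln(1/r_\alpha)\to0 \); the Li--Wang counterexamples show exactly this can fail absent the sphere symmetry. In the paper this---not dyadic decay---is what the Pohozaev identity accomplishes. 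Indeed, your proposed route (Pohozaev \( \Rightarrow \) geometric decay of neck energy \( \Rightarrow \) energy identity) is circular in the \( \alpha \)-setting: the correction term in \eqref{eq:Pohozaev}, \( \frac{\alpha-1}{\alpha t}\int_{D_t}f_\alpha\lvert\nabla_{A_\alpha}u_\alpha\rvert^2 \), contains the full bubble energy inside \( D_t \), and integrated over the neck scales it is of size \( (\alpha-1)\ln\bigl(\delta/(r_\alpha R)\bigr)E(\omega) \), which is small precisely iff \( \bar\mu=1 \)---the very thing to be proved. The paper breaks this circle by first proving neck-energy smallness independently of Pohozaev (conservation law + Hodge + the Bethuel--H\'elein Wente lemma with \( L^{2,\infty} \) input, \cref{lem:EI-E-A-u-D}), and only then feeding that smallness into the integrated Pohozaev identity to bound its left-hand side by \( C\epsilon^2 \), which forces \( (\alpha-1)\ln(1/r_\alpha)\to 0 \), hence \( \bar\mu=1 \) and \( \lVert f_\alpha\rVert_{C^0}\to1 \) (\cref{lem:f_alpha-C0-equal-1}). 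Note finally that this convergence \( f_\alpha\to1 \) (not mere boundedness) is also what licenses the absorption step \( \lVert\nabla Q_\alpha\rVert_{L^{2,1}}\leq\frac12\lVert f_\alpha\nabla_{A_\alpha}\tilde u_\alpha\rVert_{L^{2,1}} \) in the no-neck argument, a dependence your sketch leaves implicit.
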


For general target manifolds without symmetry, there are counter examples that the energy identity and no-neck property fails for \( \alpha \)-harmonic maps \cites{LiWang2010Weak,LiWang2015counterexample}. The advantage of spherical target case is that a hidden Jacobian structure for the Euler--Lagrange equations can be recovered. This particular structure has been explored in various harmonic maps type problems, see e.g., \cites{Bethuel1992Un,Helein2002Harmonic,LinRiviere2002Energy,LiZhu2019Energy,LiLiuZhuZhu2021Energy,LiZhuZhu2023Qualitative}. 

For \( \alpha \)-harmonic maps into sphere, the hidden Jacobian structure is derived as follows \cites{Helein2002Harmonic,LiZhu2019Energy}: Recall the Euler--Lagrange equation for such kind of harmonic maps is given by 
\[
  \div(f_\alpha \nabla u)+f_\alpha\lvert \nabla u \rvert^2u=0,\quad f_\alpha \mathpunct{:}=\alpha(1+\lvert \nabla u \rvert^2)^{\alpha-1}.
\]
Since \( u\in S^{K-1} \), we have \( \lvert u \rvert^2=1 \) and \( \sum_{j}u^j\nabla  u^j=0 \), thus we can rewrite the equation of spherical \( \alpha \)-harmonic maps as 
\begin{equation}\label{eq:skew-div-form-alpha-HM}
  \div\left( f_\alpha \nabla u^i \right)=-f_\alpha \nabla u^j\nabla u^ju^i 
  =f_\alpha \left( \nabla u^iu^j-\nabla u^ju^i \right)\nabla u^j
  =\mathpunct{:}\Omega^{ij}\nabla u^j,
\end{equation}
where \( \Omega^{ij}=f_\alpha(\nabla u^iu^j-\nabla u^ju^i) \) is skew-symmetric and divergence free, i.e., \( \div\Omega^{ij}=0 \). 
Thus, we may find \( G^{ij} \in L^{\frac{2\alpha}{2\alpha-1}}(\mathbb{R}^2) \) such that \( \Omega^{ij} = \nabla^\perp G^{ij} \) in \( D \). This allows us to recover the hidden Jacobian structure
\[
  \div(f_\alpha \nabla u^i) = \Omega^{ij} \nabla u^j = \nabla^\perp G^{ij}\cdot \nabla u^j.
\]
In particular, this implies that \( \nabla^\perp G^{ij} \cdot \nabla u^j \) belongs to the Hardy space and the compensate compactness results can be applied, which is a strictly finer space than \( L^1 \)—the natural target space for \( u \in W^{1,2\alpha} \)—since \( \nabla^\perp G^{ij} \in L^{\frac{2\alpha}{2\alpha-1}} \) and \( \nabla u^j \in L^{2\alpha} \).

Similarly, the Euler--Lagrange equation for spherical \( \alpha \)-Yang--Mills--Higgs fields can also be expressed in a form analogous to \eqref{eq:skew-div-form-alpha-HM}. In fact, starting from the localized expression of \eqref{eq:alpha-YMH} in the case of spherical fibers (see \eqref{eq:alpha-YMH-local}), we obtain
\begin{equation*}
\operatorname{div}(f_\alpha \nabla_A u) + f_\alpha \lvert \nabla_A u \rvert^2 u + f_\alpha a \cdot \nabla_A u - \mu(u) \nabla \mu(u) = 0,\quad f_\alpha \mathpunct{:}=\alpha(1+\lvert \nabla _Au \rvert^2)^{\alpha-1}.
\end{equation*}
Observe that for a metric connection \( A \), the matrix \( a \) is skew-symmetric, and we have \( \sum_j \nabla_A u^j u^j = \sum_j \nabla u^j u^j = 0 \). This yields
\begin{equation*}
\operatorname{div}(f_\alpha \nabla_A u^i)
= \Omega^{ij} \nabla_A u^j - \mu(u)\nabla_i \mu(u),
\end{equation*}
where
\begin{equation*}
\Omega^{ij} = f_\alpha \left( \nabla_A u^i u^j - \nabla_A u^j u^i - a^{ij} \right)
\end{equation*}
is still skew-symmetric, though generally no longer divergence-free. This lack of divergence-free structure introduces new analytic challenges and makes the corresponding theory particularly rich and interesting.

To recover the compensation-type compactness available for spherical \( \alpha \)-harmonic maps, we establish a new conservation law for \( \alpha \)-Yang--Mills--Higgs fields. This law arises, inspired by Noether's theorem, from the invariance of the action under the global gauge transformation \( g \cdot (a, u) = (gdg^{-1} +gag^{-1}, gu) \) for \( g \in G \subset \mathrm{SO}(K) \). More precisely,
\begin{thm}[Conservation Law for \( \alpha \)-YMH Fields with fiber \( F=S^{K-1} \)]\label{thm:conservation-law}
Let \( u: B^m \to S^{K-1}\subset\mathbb{R}^K \) be a local section on unit ball \( B^m\subset \mathbb{R}^m \) and \( A = d+a_\alpha dx^\alpha \in \Omega^1(B^m, \mathfrak{so}(K)) \) be a connection, if \( (A,u) \) is a critical point of the \( \alpha \)-YMH functional 
\[
  \mathcal{L}_\alpha(A,u;B^m)=\frac{1}{2}\int_{B^m}\left(  1+\lvert \nabla _Au \rvert^2  \right)^\alpha-1+\lvert F_A \rvert^2+\lvert \mu(u) \rvert^2,
\]
then the following \emph{conservation law} holds:
\begin{equation}\label{eq:conservation-law-general-b}
\mathrm{div}\left[\left(f_\alpha [\nabla_A u]^p_\beta\, b_q^p u^q + F_{\gamma\beta,r}^q(a_{\gamma,p}^r b_q^p - b_p^r a_{\gamma,q}^p)\right)\partial_{x^\beta} \right] = 0,\quad \forall b = (b_q^p) \in \mathfrak{so}(K),
\end{equation}
where the curvature components \( F_{\alpha\beta} \in \mathfrak{so}(K) \) are given by
\[
F_{\alpha\beta} = \partial_\alpha a_\beta - \partial_\beta a_\alpha + [a_\alpha, a_\beta],
\]
and we set 
\[
[\nabla_A u]^i_\beta = \partial_\beta u^i + a_{\beta,j}^i u^j,\quad 
f_\alpha := \alpha\left(1 + |\nabla_A u|^2\right)^{\alpha - 1}.
\]
\end{thm}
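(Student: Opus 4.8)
The plan is to obtain \eqref{eq:conservation-law-general-b} as the Noether identity attached to the invariance of \( \mathcal{L}_\alpha \) under the global gauge group \( G \subset \mathrm{SO}(K) \), following the infinitesimal route \( g_t = \exp(tb) \) with \( b \in \mathfrak{so}(K) \). Differentiating the action \( g\cdot(a,u) = (g\,dg^{-1}+gag^{-1}, gu) \) at \( t=0 \) and using that \( b \) is spatially constant (so \( g_t\,dg_t^{-1}=0 \)), one reads off the infinitesimal generators
\[
  \delta u^i = b^i_q u^q, \qquad \delta a_\gamma = [b,a_\gamma].
\]
Since \( b \in \mathfrak{so}(K) \), the vector \( bu \) is tangent to \( S^{K-1} \) and \( [b,a_\gamma]\in\mathfrak{so}(K) \), so \( (\delta u,\delta a) \) is an \emph{admissible} variation through sphere-valued sections and \( \mathfrak{so}(K) \)-connections.

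First I would verify the pointwise infinitesimal invariance of the Lagrangian density. A short computation gives \( \delta[\nabla_A u]^i_\beta = b^i_q[\nabla_A u]^q_\beta \), i.e. \( \nabla_A u \) transforms tensorially, and \( \delta F_{\gamma\beta} = [b,F_{\gamma\beta}] \). Because \( b \) is skew-symmetric, contracting each covariant quantity with itself yields \( \delta|\nabla_A u|^2 = 2\langle\nabla_A u, b\,\nabla_A u\rangle = 0 \) and \( \delta|F_A|^2 = 0 \); the potential satisfies \( \delta|\mu(u)|^2 = 2\mu(u)\langle\nabla\mu(u),bu\rangle = 0 \) by the gauge-invariance of \( \mu \). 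Hence \( \delta\mathcal{L}_\alpha = 0 \) pointwise.

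To upgrade this symmetry to a pointwise conservation law I would localize: for \( \eta\in C_c^\infty(B^m) \) take the compactly supported variation \( \delta_\eta u = \eta\,\delta u \), \( \delta_\eta a = \eta\,\delta a \). Setting \( P^i_\beta := \partial\mathcal{L}_\alpha/\partial(\partial_\beta u^i) = f_\alpha[\nabla_A u]^i_\beta \) and \( Q^{i}_{\gamma j\beta} := \partial\mathcal{L}_\alpha/\partial(\partial_\beta a^i_{\gamma,j}) = F^i_{\beta\gamma,j} \), the chain rule together with the pointwise invariance \( \delta\mathcal{L}_\alpha = 0 \) isolates the derivatives of \( \eta \):
\[
  \tfrac{d}{dt}\big|_{t=0}\mathcal{L}_\alpha(A_t,u_t) = \int_{B^m}(\partial_\beta\eta)\,J^\beta, \qquad J^\beta := P^i_\beta\,\delta u^i + Q^{i}_{\gamma j\beta}\,\delta a^i_{\gamma,j}.
\]
Since \( (A,u) \) is critical and \( (\delta_\eta u,\delta_\eta a) \) is admissible and compactly supported, the left-hand side vanishes; as \( \eta \) is arbitrary this forces \( \partial_\beta J^\beta = 0 \). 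Substituting the expressions for \( P,Q,\delta u,\delta a \) reproduces the two summands in \eqref{eq:conservation-law-general-b}: the matter part \( f_\alpha[\nabla_A u]^p_\beta b^p_q u^q \) and the gauge part \( F^q_{\gamma\beta,r}(a^r_{\gamma,p}b^p_q - b^r_p a^p_{\gamma,q}) \).

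The main obstacle is the gauge (curvature) piece. Two points need care: the verification that \( \delta F_{\gamma\beta} = [b,F_{\gamma\beta}] \), in which the non-abelian terms \( [[b,a_\gamma],a_\beta]+[a_\gamma,[b,a_\beta]] \) collapse to \( [b,[a_\gamma,a_\beta]] \) only via the Jacobi identity; and the reorganization of \( Q^{i}_{\gamma j\beta}[b,a_\gamma]^i_j \) into the manifestly skew expression displayed in the theorem, which leans on the skew-symmetry of \( a \), \( F \), and \( b \) in their matrix indices. One must also confirm that the potential contribution \( \mu(u)\langle\nabla\mu(u),bu\rangle \) vanishes across the full range \( b\in\mathfrak{so}(K) \), using the spherical structure of the fiber. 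As a cross-check bypassing Noether's theorem, I would verify \( \partial_\beta J^\beta = 0 \) directly by differentiating \( J^\beta \) and substituting both Euler–Lagrange equations in \eqref{eq:alpha-YMH}: the \( u \)-equation disposes of the matter term (the normal term \( f_\alpha|\nabla_A u|^2 u \) drops because \( u^i b^i_q u^q = 0 \), the potential term by invariance), while the Yang–Mills equation, whose source is the moment-map current \( f_\alpha u^*\nabla_A u \), cancels the cross terms from the connection — this second route makes the role of the spherical constraint most transparent.
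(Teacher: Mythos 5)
Your proposal is correct and follows essentially the same route as the paper: the paper likewise derives \eqref{eq:conservation-law-general-b} as a Noether identity for the global symmetry \( g\cdot(a,u)=(gag^{-1}+g\,dg^{-1},\,gu) \), localizing the infinitesimal generator \( ([b,a],\,bu) \) by a compactly supported test function, using criticality together with the pointwise invariance of the Lagrangian to isolate the \( d\varphi \)-term and force the current to be divergence-free (\cref{prop:abstract-conservation-law}), and then expanding the momenta \( \partial L_\alpha/\partial P \) and \( \partial L_\alpha/\partial Q \) to obtain the stated component form. Your presentation merely merges the paper's two steps (abstract law, then explicit expansion) into one pass, and your direct Euler--Lagrange cross-check is a supplementary verification rather than a different method.
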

In applications to our setting of \( \alpha \)-YMH Fields with fiber \( F=S^{K-1} \), we need the following specific component form of conservation law.
\begin{cor}\label{cor:conservation-law-component}
In particular, taking \( b = E_{ij} - E_{ji} \in \mathfrak{so}(K) \) (the elementary skew-symmetric matrix with 1 in the \( (i,j) \)-position and \( -1 \) in the \( (j,i) \)-position), we obtain the component-wise identity:
\[
\mathrm{div}\left[ \left( f_\alpha \left( [\nabla_A u]^i_\beta u^j - [\nabla_A u]^j_\beta u^i \right) - 2[F_{\gamma\beta}, a_\gamma]_j^i \right)\partial_{x^\beta} \right] = 0,
\] 
or more concisely
\begin{equation}\label{eq:conservation-law-component}
  \div\left[ f_\alpha\left( [\nabla _Au]^iu^j-[\nabla _Au]^ju^i \right)+2 \mathrm{tr}[F_A,a]^i_j \right]=0,
\end{equation}
where \( \mathrm{tr}[F_A,a]=[F_{\beta\gamma},a_\gamma]dx^\beta\in\Omega^1(B^m, \mathfrak{so}(K)) \) is a globally defined Lie-algebra-valued 1-form.
\end{cor}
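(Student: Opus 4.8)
The plan is to derive the corollary purely algebraically from the general conservation law \eqref{eq:conservation-law-general-b}, by specializing to the elementary generator $b=E_{ij}-E_{ji}$, whose components are $b_q^p=\delta_i^p\delta_j^q-\delta_j^p\delta_i^q$, and then simplifying the two resulting contractions with the aid of the skew-symmetry of $a_\gamma$ and $F_{\gamma\beta}$, which holds because $A$ is a metric connection valued in $\mathfrak{so}(K)$. Throughout, all repeated indices (including the base-direction index $\gamma$) are summed.

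First I would handle the kinetic/Higgs contribution $f_\alpha[\nabla_A u]_\beta^p\,b_q^p u^q$. Contracting $b_q^p u^q=\delta_i^p u^j-\delta_j^p u^i$ against $[\nabla_A u]_\beta^p$ gives at once $f_\alpha\bigl([\nabla_A u]_\beta^i u^j-[\nabla_A u]_\beta^j u^i\bigr)$, which is exactly the first summand of the claimed component identity; no cancellation is required here.

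The substantive step is the curvature contribution $F_{\gamma\beta,r}^q\bigl(a_{\gamma,p}^r b_q^p-b_p^r a_{\gamma,q}^p\bigr)$. Substituting the components of $b$ and carrying out the $\delta$-contractions produces the four-term expression
\[
  F_{\gamma\beta,r}^j a_{\gamma,i}^r-F_{\gamma\beta,r}^i a_{\gamma,j}^r-F_{\gamma\beta,i}^q a_{\gamma,q}^j+F_{\gamma\beta,j}^q a_{\gamma,q}^i.
\]
I would then rewrite each term, using the skew-symmetries $F_{\gamma\beta,k}^l=-F_{\gamma\beta,l}^k$ and $a_{\gamma,k}^l=-a_{\gamma,l}^k$, so as to match one of the two canonical shapes $F_{\gamma\beta,k}^i a_{\gamma,j}^k$ and $a_{\gamma,k}^i F_{\gamma\beta,j}^k$ occurring in the matrix commutator $[F_{\gamma\beta},a_\gamma]_j^i=F_{\gamma\beta,k}^i a_{\gamma,j}^k-a_{\gamma,k}^i F_{\gamma\beta,j}^k$. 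A short, purely index-level bookkeeping then shows that the four terms combine into $-2[F_{\gamma\beta},a_\gamma]_j^i$, which is precisely the second summand in the first displayed identity of the corollary.

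Finally, to reach the concise form \eqref{eq:conservation-law-component} I would only reconcile the curvature ordering: since the globally defined one-form $\tr[F_A,a]$ is built from $F_{\beta\gamma}$ rather than $F_{\gamma\beta}$, the antisymmetry of the curvature two-form gives $[F_{\beta\gamma},a_\gamma]_j^i=-[F_{\gamma\beta},a_\gamma]_j^i$, so the coefficient $-2[F_{\gamma\beta},a_\gamma]_j^i$ turns into $+2\tr[F_A,a]_j^i$. I expect the only genuine care to lie in this curvature collapse: the four terms look superficially distinct and must be matched to the two commutator shapes through repeated, correctly signed applications of skew-symmetry, so the main risk is a sign slip there rather than any analytic difficulty.
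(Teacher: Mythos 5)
Your proposal is correct and is essentially the paper's own proof: substitute \( b = E_{ij}-E_{ji} \) into \eqref{eq:conservation-law-general-b}, expand the curvature contraction into the same four terms, collapse them to \( -2[F_{\gamma\beta},a_\gamma]^i_j \) via skew-symmetry, and reconcile \( [F_{\beta\gamma},a_\gamma]^i_j=-[F_{\gamma\beta},a_\gamma]^i_j \) to obtain the \( \tr[F_A,a] \) form. The only cosmetic difference is bookkeeping: the paper recognizes the four terms as \( -\bigl([F_{\gamma\beta},a_\gamma]-[F_{\gamma\beta},a_\gamma]^T\bigr)^i_j \) and then uses once that the commutator of skew-symmetric matrices is skew-symmetric, whereas you apply the skew-symmetry of \( F_{\gamma\beta} \) and \( a_\gamma \) term by term to reduce everything to the two commutator shapes — the same computation, equally valid.
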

\begin{rmk}
  When the curvature vanishes, i.e., \( F_A = 0 \), the connection \( A \) is locally gauge equivalent to the trivial connection. That is, there exists a local gauge in which the connection 1-form \( a \) vanishes. In such a gauge, the covariant derivative reduces to the ordinary derivative, \( \nabla_A u = du \), and the conservation law simplifies to the classical identity for \( \alpha \)-harmonic maps into spheres:
  
  \begin{equation}\label{eq:conservation-law-alpha-HM}
    \div\left[ f_\alpha(\nabla u^iu^j-\nabla u^ju^i) \right]=0.
  \end{equation}
\end{rmk}

By applying the conservation law, the equation for \( \alpha \)-YMH fields can be recast into a form that exhibits a Jacobian structure, accompanied by additional lower-order terms arising from the connection 1-form \( a \). A crucial advantage lies in the favorable regularity theory for \( a \): due to the availability of local Coulomb gauges and the fact that \( a \) satisfies a perturbed Yang--Mills equation, one obtains small energy regularity for \( a \) over the entire disc \( D \). In contrast, such small energy regularity does not hold globally for the map \( u \), which only admits small energy control on annular subregions \( T_{r_1, r_2} \subset D \) over the neck, provided the ratio \( r_1/r_2 \) is fixed. This observation enables us to view the \( \alpha \)-YMH equation as a system governed by an underlying Jacobian structure, perturbed by well-controlled terms involving \( a \). Consequently, we can recover the compensation-compactness property and establish the energy identity.

To establish the energy identity, we first apply small energy estimates to obtain a uniform \( C^0 \) bound on \( f_\alpha \) throughout \( D \). However, to prove both the energy identity for the \( \alpha \)-energy and the absence of necks, it is necessary to strengthen this estimate and show that the bound converges to \( 1 \). In the case of \( \alpha \)-harmonic maps, this refinement relies on a Pohozaev-type identity. For \( \alpha \)-YMH fields, we derive a Pohozaev identity for the coupled system involving both maps and connections.
\begin{prop}[Pohozaev identity for \( \alpha \)-YMH fields]\label{prop:Pohozaev}
Let \( D\subset \mathbb{R}^2 \) be the unit disk equipped with the standard Euclidean metric. Suppose \( (A_\alpha,u_\alpha) \) is a critical point of \( \mathcal{L}_{\alpha}(A,u;D) \). Then, for any \( 0<t<1 \), the following Pohozaev identity holds:
\begin{equation}\label{eq:Pohozaev}
  \begin{split}
      &\int_{\partial D_t}\left( 1+\lvert \nabla_{A_\alpha} u_\alpha \rvert^2 \right)^{\alpha-1}
      \left( \lvert \nabla _{A_\alpha;\partial_r}u_\alpha \rvert^2
      -\frac{1}{2\alpha}\lvert \nabla_{A_\alpha} u_\alpha \rvert^2 \right)d\sigma_0\\
      &\qquad=\frac{\alpha-1}{\alpha t}\int_{D_t}\left( 1+\lvert \nabla_{A_\alpha} u_\alpha \rvert^2 \right)^{\alpha-1}\lvert \nabla_{A_\alpha} u_\alpha \rvert^2 dv_g+O(t),
  \end{split}
\end{equation}
where \( d\sigma_0 \) denotes the volume form on \( \partial D_t \).
\end{prop}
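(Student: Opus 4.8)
The plan is to obtain \eqref{eq:Pohozaev} as the scaling (dilation) balance law for the coupled system, by contracting the map equation against the covariant radial field and integrating over \(D_t\). Concretely, I would start from the localized first equation in the spherical case,
\[
\div(f_\alpha\nabla_A u)+f_\alpha\lvert\nabla_A u\rvert^2 u+f_\alpha\, a\cdot\nabla_A u-\mu(u)\nabla\mu(u)=0,
\]
take the \(\mathbb{R}^K\)-inner product with \(x^\beta[\nabla_A u]_\beta\) (equivalently \(r\,\nabla_{A;\partial_r}u\)), and integrate over \(D_t\). The spherical-target structure gives the first simplification: since \(u\in S^{K-1}\) and \(a\in\mathfrak{so}(K)\) we have \(\langle u,[\nabla_A u]_\beta\rangle=\langle u,\partial_\beta u\rangle+\langle u,a_\beta u\rangle=0\), so the radial multiplier is pointwise orthogonal to \(u\) and the tension term \(f_\alpha\lvert\nabla_A u\rvert^2 u\) is annihilated and drops out entirely.

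Next I would integrate the leading term by parts. The boundary contribution is \(\int_{\partial D_t}f_\alpha\,[\nabla_A u]^i_\gamma\nu_\gamma\,x^\beta[\nabla_A u]^i_\beta\), which on \(\partial D_t\) reduces cleanly to \(t\int_{\partial D_t}f_\alpha\lvert\nabla_{A;\partial_r}u\rvert^2\). In the interior I would split \(\partial_\gamma\big(x^\beta[\nabla_A u]^i_\beta\big)=[\nabla_A u]^i_\gamma+x^\beta\partial_\gamma[\nabla_A u]^i_\beta\); symmetrizing the second factor in \(\gamma,\beta\) produces \(\tfrac12 f_\alpha\,x^\beta\partial_\beta\lvert\nabla_A u\rvert^2=\tfrac12 x^\beta\partial_\beta(1+\lvert\nabla_A u\rvert^2)^\alpha\), while its antisymmetric part is exactly a curvature/connection expression \(\sim(F_A+a)\) that I will relegate to the remainder. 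One more integration by parts of the symmetric piece yields the boundary term \(\tfrac t2\int_{\partial D_t}(1+\lvert\nabla_A u\rvert^2)^\alpha\) together with the interior anomaly \(\int_{D_t}\big[f_\alpha\lvert\nabla_A u\rvert^2-(1+\lvert\nabla_A u\rvert^2)^\alpha\big]\). Collecting everything gives the master identity
\[
t\!\int_{\partial D_t}\!f_\alpha\lvert\nabla_{A;\partial_r}u\rvert^2-\frac t2\!\int_{\partial D_t}\!(1+\lvert\nabla_A u\rvert^2)^\alpha=\int_{D_t}\!\big[f_\alpha\lvert\nabla_A u\rvert^2-(1+\lvert\nabla_A u\rvert^2)^\alpha\big]+(\text{remainder}),
\]
and dividing by \(\alpha t\) reproduces exactly the left-hand side of \eqref{eq:Pohozaev} together with the bulk factor \(\tfrac{\alpha-1}{\alpha t}\int_{D_t}(1+\lvert\nabla_A u\rvert^2)^{\alpha-1}\lvert\nabla_A u\rvert^2\); when \(\alpha=1\) the anomaly vanishes, recovering the conformal (trace-free stress-tensor) Pohozaev identity.

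It then remains to show that all remaining contributions are \(O(t)\). These are of three kinds: (i) the connection term \(f_\alpha\,a\cdot\nabla_A u\) contracted with the radial field, and the curvature commutator \(\sim F_A\) from the antisymmetric part above; (ii) the potential term \(-\mu(u)\nabla\mu(u)\cdot x^\beta[\nabla_A u]_\beta=-\tfrac12 x^\beta\partial_\beta\lvert\mu(u)\rvert^2\), which integrates to \(O(t^2)\); and (iii) the scalar discrepancies between \((1+\lvert\nabla_A u\rvert^2)^\alpha\) and \((1+\lvert\nabla_A u\rvert^2)^{\alpha-1}\lvert\nabla_A u\rvert^2\), differing by the lone factor \((1+\lvert\nabla_A u\rvert^2)^{\alpha-1}\) in both the boundary and interior integrals. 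For (ii) and (iii) I would use the uniform \(C^0\) bounds on \(f_\alpha\), \(\mu\) and \(\lvert\nabla_A u\rvert\) available from the small-energy regularity theory, together with \(\lvert D_t\rvert=\pi t^2\) and \(\lvert\partial D_t\rvert=2\pi t\); for (i) I would additionally invoke the \(C^0\) control of \(a\) and \(F_A\) (from the local Coulomb gauge and the perturbed Yang--Mills equation), and crucially the pointwise factor \(\lvert x\rvert\le t\) supplied by the radial multiplier, so that Cauchy--Schwarz against the finite energy \(\int_{D_t}\lvert\nabla_A u\rvert^2\le C\) renders each such term \(O(t)\).

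The main obstacle I anticipate is precisely this gauge-theoretic bookkeeping in the remainder: unlike the \(\alpha\)-harmonic map case, covariant derivatives do not commute, so integration by parts forces the appearance of the curvature \(F_A\) and of cross terms carrying \(a\), and one must verify that every one of them — in both the interior and the boundary integrals at \(\partial D_t\), where \(\int_{\partial D_t}\lvert\nabla_A u\rvert^2\) is not uniformly controlled — genuinely collects a power of \(t\) rather than being merely \(o(1)\). The delicate point is to arrange the contractions so that each connection- or curvature-dependent term inherits the prefactor \(t\) from the dilation multiplier and is then absorbed via the \(C^0\) bounds on \(a\), \(F_A\), and \(f_\alpha\); once this uniform control is secured, assembling the leading terms and the anomaly yields \eqref{eq:Pohozaev}.
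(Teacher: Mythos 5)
Your route is genuinely different from the paper's: the paper derives \eqref{eq:Pohozaev} from the first variation of \( \mathcal{L}_\alpha \) under domain diffeomorphisms (the identity \eqref{eq:first-variational-formula-domain-diffeomorphism}, applied to the cutoff radial field \( X=\eta(r)\,r\partial_r \) and then letting the cutoff collapse to \( \partial D_t \)), which is manifestly gauge-covariant, whereas you test the gauge-fixed map equation in \eqref{eq:alpha-YMH-local} against the dilation multiplier \( x^\beta[\nabla_A u]_\beta \). The multiplier route can be made to work, and your leading-order bookkeeping is right: the tension term dies by \( \langle u,[\nabla_A u]_\beta\rangle=0 \), the boundary term is \( t\int_{\partial D_t}f_\alpha|\nabla_{A;\partial_r}u_\alpha|^2 \), the symmetrized interior term is \( \tfrac12 x^\beta\partial_\beta(1+|\nabla_A u|^2)^\alpha \), and the scalar discrepancies are harmless (note, though, that your appeal to a uniform \( C^0 \) bound on \( |\nabla_A u_\alpha| \) is not available—it grows like \( r_\alpha^{-1} \)—but it is also not needed: \( \lVert f_\alpha\rVert_{C^0}\le\Lambda' \) from \cref{lem:f-alpha-C0}, together with \cref{rmk:a_nabla_a_L_infinity} for \( a \) and \( F_A \), suffices, exactly as in the paper's proof).

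The genuine gap is in your remainder estimate (i). After dividing the master identity by \( \alpha t \), a remainder is admissible only if it is \( O(t^2) \) \emph{before} division. Terms linear in \( \nabla_A u \) do achieve this: for the curvature commutator, \( \bigl|\int_{D_t}f_\alpha x^\beta\langle[\nabla_A u]_\gamma,F_{\gamma\beta}u_\alpha\rangle\bigr|\le Ct\int_{D_t}|\nabla_A u|\le Ct\,|D_t|^{1/2}\lVert\nabla_A u\rVert_{L^2}=O(t^2) \), and similarly for the \( \mu \)-term. But the terms \emph{quadratic} in \( \nabla_A u \) carrying a bare \( a \)—namely \( f_\alpha(a\cdot\nabla_A u)\cdot x^\beta[\nabla_A u]_\beta \) coming from the equation, and its twin \( f_\alpha x^\beta\langle a_\gamma[\nabla_A u]_\gamma,[\nabla_A u]_\beta\rangle \) coming from the antisymmetric part of your integration by parts—are only \( O\bigl(t\int_{D_t}|\nabla_A u|^2\bigr)=O(t) \) under your proposed bound, hence \( O(1) \) after division, not \( O(t) \). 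This cannot be repaired by sharper constants: the proposition is used in the blow-up regime, where energy concentrates at the origin and \( \int_{D_t}|\nabla_{A_\alpha}u_\alpha|^2 \) stays bounded away from zero, uniformly in \( \alpha \), for each fixed \( t \). The correct observation is that these two quadratic terms cancel \emph{identically}: since \( a_\gamma\in\mathfrak{so}(K) \) and \( \langle u_\alpha,[\nabla_A u]_\beta\rangle=0 \), the contraction of \( f_\alpha\,a\cdot\nabla_A u \) with the multiplier equals \( f_\alpha x^\beta\langle a_\gamma[\nabla_A u]_\gamma,[\nabla_A u]_\beta\rangle \), which is precisely the negative of the commutator contribution once both are placed on the same side of the identity, while the \( a_\beta \)-part of the commutator, \( x^\beta\langle[\nabla_A u]_\gamma,a_\beta[\nabla_A u]_\gamma\rangle \), vanishes pointwise by skew-symmetry. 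Equivalently, you can avoid ever seeing bare-\( a \) terms by writing the equation covariantly as \( \nabla_A^*(f_\alpha\nabla_A u_\alpha)+\mu\nabla\mu=0 \) and integrating by parts covariantly, so that only \( [\nabla_{A,\gamma},\nabla_{A,\beta}]u=F_{\gamma\beta}u \) survives, which is linear in nothing worse than \( F_A \) and is \( O(t^2) \) as above. With that cancellation (or the covariant rewriting) supplied, your argument closes and reproduces \eqref{eq:Pohozaev}.
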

We remark that the derivation of the Pohozaev identity is based on variations of the domain, in contrast to conservation laws, which stem from symmetries of the target manifold. For the case of \( \alpha \)-harmonic maps, we refer the reader to \cite{LiWang2010Weak}, and note that a suitable modification of this argument also yields the monotonicity formula in higher dimensions; see \cite{Price1983Monotonicity} for Yang--Mills fields and \cite{Zhang2004Compactness} for Yang--Mills--Higgs fields.

The remainder of this paper is organized as follows. In \cref{sec:Pre}, we review fundamental properties of \( \alpha \)-YMH fields and key elliptic estimates in Lorentz spaces. In \cref{sec:EL-ConservationLaw-Pohozaev}, we then derive the Euler--Lagrange equation for \( \alpha \)-YMH fields in the case where the fiber \( F \) is a sphere. This is followed by two crucial lemmas: a conservation law for \( \alpha \)-YMH fields, and a Pohozaev-type identity, which are central to our arguments for the energy identity and no-neck property, respectively. In \cref{sec:EI}, we establish the energy identity, while in \cref{sec:NN}, we prove the no-neck property. Together, these results complete the proof of \cref{thm:main}.
\section{Preliminaries}\label{sec:Pre}
In this section, we begin by recalling the small energy estimates for the coupled \( \alpha \)-Yang--Mills--Higgs system. In the first subsection, we extend these estimates to a rescaled version that is both suitable and essential for the blow-up analysis. As an application, we conclude the subsection with a sketch of the proof of \cref{prop:bubbing}. In the second subsection, we revisit several classical results concerning elliptic estimates in Lorentz spaces, which will play a key role in our later arguments.

\subsection{Small energy estimates}
We start by considering the local formulation of the \( \alpha \)-YMH equations with target fiber \( F = S^{K-1} \) in a neighborhood of a blow-up point. Assume \( \Sigma = D \subset \mathbb{R}^2 \) is the unit disc equipped with the Euclidean metric, and that both the principal bundle \( \mathcal{P} \) and the associated fiber bundle \( \mathcal{F} = \mathcal{P} \times_G F \) are trivial. Under a fixed trivialization, a section \( \phi \in \mathscr{S} \) can be identified with a map \( u : D \to F \), and a connection \( A \in \mathscr{A} \) is written as \( A = d + a \) for a \( \mathfrak{g} \)-valued 1-form \( a \). After imposing the Coulomb gauge \cite{Uhlenbeck1982Connections}, the Euler--Lagrange equations \eqref{eq:alpha-YMH} reduce locally to
\begin{equation}\label{eq:alpha-YMH-local}
  \begin{cases}
    \div(f_\alpha \nabla_A u) + f_\alpha \lvert \nabla_A u \rvert^2 u + \Phi_\alpha(A,u) = 0, \\
    \Delta a + \Psi_\alpha(A,u) = 0,
  \end{cases}
\end{equation}
where \( f_\alpha = \alpha (1 + \lvert \nabla_A u \rvert^2)^{\alpha - 1} \), and
\begin{align*}
  \Phi_\alpha(A,u) &= f_\alpha \left( a \cdot du + a \cdot a \cdot u \right) - \mu(u) \cdot \nabla \mu(u), \\
  \Psi_\alpha(A,u) &= -f_\alpha u^* \nabla_A u + \langle da, a \rangle + \langle a, [a, a] \rangle - [F_A \lh a^\sharp].
\end{align*}
We refer the reader to \cref{sec:EL-ConservationLaw-Pohozaev} for the derivation of \eqref{eq:alpha-YMH-local}, as well as the precise definitions of \( a \cdot du \) and \( a \cdot a \cdot u \).

The following small energy estimates are analogues of those established for harmonic maps by Sacks--Uhlenbeck \cite{SacksUhlenbeck1981existence}; we refer to \cite{Song2011Critical} for the corresponding results in the context of \( \alpha \)-YMH fields.

\begin{prop}[Small energy estimates; \cite{Song2011Critical}]\label{prop:small-energy-estimate}
Let \( D \subset \mathbb{R}^2 \) be the unit disc and \( D_{1/2} \) the concentric disc of radius \( 1/2 \). Suppose that \( (A,u) \) is a smooth pair satisfying \eqref{eq:alpha-YMH-local} in \( D \) and \( \lVert F_A \rVert_{L^2(D)} \leq \delta_0 \), where \( \delta_0 > 0 \) is the constant from the Coulomb gauge theorem.
\begin{enumerate}
  \item There exist constants \( \alpha_0 > 1 \) and \( \epsilon_0 > 0 \) such that, if \( \lVert \nabla_A u \rVert_{L^2(D)} < \epsilon_0 \), then for any \( 1 < p < \infty \), the following estimate holds uniformly for \( 1 \leq \alpha < \alpha_0 \):
  \[
    \lVert u - \bar{u} \rVert_{W^{2,p}(D_{1/2})} \leq C \left( \lVert \nabla_A u \rVert_{L^2(D)} + \lVert F_A \rVert_{L^2(D)} \right),
  \]
  where \( \bar{u} \) denotes the integral mean value of \( u \) over \( D \), and \( C > 0 \) depends on \( p \), \( \alpha_0 \), \( \epsilon_0 \), and \( \lVert \mu \rVert_{W^{1,\infty}} \).

  \item For any \( 1 < p < 2 \), there exists \( \alpha_p \in (1, 2) \) such that, for all \( 1 < \alpha < \alpha_p \), the following estimate holds:
  \[
    \lVert a \rVert_{W^{2,p}(D_{1/2})} \leq C \left( \lVert \nabla_A u \rVert_{L^2(D)} + \lVert F_A \rVert_{L^2(D)} \right),
  \]
  where \( C > 0 \) depends on \( p \), \( \alpha_0 \) and \( \lVert \mu \rVert_{W^{1,\infty}} \).
\end{enumerate}
\end{prop}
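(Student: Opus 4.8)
The plan is to treat \eqref{eq:alpha-YMH-local} as a coupled elliptic system and run a simultaneous bootstrap: the first equation is a quasilinear equation for \( u \) with principal coefficient \( f_\alpha \), while the second is a Poisson equation for \( a \) in the Coulomb gauge. The two smallness hypotheses, \( \lVert \nabla_A u \rVert_{L^2(D)} < \epsilon_0 \) and \( \lVert F_A \rVert_{L^2(D)} \le \delta_0 \), are used precisely to absorb the borderline-\( L^1 \) quadratic gradient terms that appear as sources in both equations and the cross terms coming from the coupling. Before starting I would record the baseline integrability and the uniformity in \( \alpha \): the bound \( \mathcal{L}_\alpha \le \Lambda \) gives \( \nabla_A u \in L^{2\alpha} \) with controlled norm, and since the base of \( f_\alpha = \alpha(1+\lvert\nabla_A u\rvert^2)^{\alpha-1} \) exceeds \( 1 \) with nonnegative exponent we have \( f_\alpha \ge \alpha \ge 1 \). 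The essential point is that for \( \alpha \) close to \( 1 \) the correction \( (1+\lvert\nabla_A u\rvert^2)^{\alpha-1}-1 = O\big((\alpha-1)\log(1+\lvert\nabla_A u\rvert^2)\big) \) must not degrade the elliptic or \( L^p \) constants; this forces \( \alpha_0 \) near \( 1 \) and keeps every constant uniform in \( \alpha \).

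For part (1), note that \( f_\alpha \) is not a priori bounded above, so rather than invoke \( L^\infty \)-ellipticity I would run a Schoen-type \( \epsilon \)-regularity. Testing the map equation against \( (u-\bar u)\eta^2 \) and using the sphere identity \( u\cdot\nabla_A u = 0 \) to cancel the top-order part of the source \( f_\alpha\lvert\nabla_A u\rvert^2 u \), one obtains a Caccioppoli/hole-filling inequality whose small constant (supplied by \( \lVert \nabla u\rVert_{L^2(D)} < \epsilon_0 \)) yields Morrey decay \( \int_{D_r}\lvert\nabla u\rvert^2 \lesssim r^{2\theta} \) for some \( \theta > 0 \). By Morrey's lemma this gives Hölder continuity of \( u \) with small oscillation, after which standard bootstrapping upgrades \( \nabla u \) to \( L^p_{\mathrm{loc}} \) for every \( p \); in particular \( f_\alpha \) becomes locally bounded and the equation is genuinely uniformly elliptic. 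Since \( \lvert u\rvert = 1 \), \( \nabla_A u = \nabla u + au \), and \( \mu(u)\nabla\mu(u) \) is bounded by \( \lVert\mu\rVert_{W^{1,\infty}} \), the full right-hand side then lies in every \( L^p \), and Calderón--Zygmund for \( \div(f_\alpha\nabla u)=g \) closes the bootstrap to \( \lVert u-\bar u\rVert_{W^{2,p}(D_{1/2})} \) for all \( 1<p<\infty \); subtracting \( \bar u \) and applying Poincaré makes the bound linear in \( \lVert\nabla_A u\rVert_{L^2}+\lVert F_A\rVert_{L^2} \).

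For part (2), in the Coulomb gauge the bound \( \lVert F_A\rVert_{L^2}\le\delta_0 \) controls \( \lVert a\rVert_{W^{1,2}} \), and I would apply \( L^p \) theory to \( \Delta a = -\Psi_\alpha \). The delicate term is \( f_\alpha u^*\nabla_A u \), bounded by \( f_\alpha\lvert\nabla_A u\rvert \le \alpha(1+\lvert\nabla_A u\rvert^2)^{\alpha-1/2} \); this lies in \( L^p \) exactly when \( (\alpha-\tfrac12)p\le\alpha \), i.e. \( p\le \tfrac{2\alpha}{2\alpha-1} \), which tends to \( 2 \) as \( \alpha\to1 \). This is the origin of both the restriction \( 1<p<2 \) and the threshold \( \alpha_p \): given \( p<2 \), choose \( \alpha_p \) so that \( \tfrac{2\alpha}{2\alpha-1}>p \) whenever \( \alpha<\alpha_p \). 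The remaining cubic and curvature-coupling terms in \( \Psi_\alpha \) are supercritical in \( a \) but carry a small factor from \( \delta_0 \) and are absorbed into the left side once \( \delta_0 \) is chosen small; Calderón--Zygmund then gives \( \lVert a\rVert_{W^{2,p}(D_{1/2})} \lesssim \lVert\nabla_A u\rVert_{L^2}+\lVert F_A\rVert_{L^2} \).

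The main obstacle is the borderline-\( L^1 \) scaling of the quadratic gradient terms \( f_\alpha\lvert\nabla_A u\rvert^2 \) and \( f_\alpha u^*\nabla_A u \) in two dimensions: plain \( L^p \) elliptic estimates gain no regularity at this scaling, so the iteration cannot even be started without structural input. The smallness constants \( \epsilon_0 \) and \( \delta_0 \) are exactly what supply the small multiplicative factor needed to absorb these terms, and the two bootstraps must be carried out together so that the coupling terms (e.g. \( a\cdot du \) in \( \Phi_\alpha \) and \( f_\alpha u^*\nabla_A u \) in \( \Psi_\alpha \)) are absorbed in tandem. The secondary difficulty, genuinely specific to the Sacks--Uhlenbeck regularization, is keeping every constant uniform as \( \alpha\to1^+ \) despite the unbounded-in-principle coefficient \( f_\alpha \); this is precisely what the thresholds \( \alpha_0 \) and \( \alpha_p \) encode.
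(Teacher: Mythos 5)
The paper does not actually prove this proposition: it is imported from \cite{Song2011Critical}, whose argument is the Sacks--Uhlenbeck main estimate \cite{SacksUhlenbeck1981existence} adapted to the coupled system, so your attempt can only be measured against that route. Against it, your part (1) has a genuine gap at its very first step. Testing the map equation against \( (u-\bar u)\eta^2 \) does \emph{not} cancel the source \( f_\alpha\lvert\nabla_A u\rvert^2 u \): since \( \lvert u\rvert=1 \) gives \( u\cdot(u-\bar u)=\tfrac12\lvert u-\bar u\rvert^2+\tfrac12\left(1-\lvert\bar u\rvert^2\right) \), the pairing produces \( \tfrac12\int f_\alpha\lvert\nabla_A u\rvert^2\lvert u-\bar u\rvert^2\eta^2 \), a quartic term that smallness of \( \int_D\lvert\nabla_A u\rvert^2 \) cannot absorb: a priori one only knows \( \lvert u-\bar u\rvert\le 2 \), and bounding it via Sobolev requires integrability of \( \nabla u \) beyond \( L^2 \), which is exactly what is to be proved. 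This is the classical obstruction that makes \( \epsilon \)-regularity nontrivial, and your outline supplies none of the known mechanisms to overcome it (Bochner formula plus Moser iteration or Schoen's scaling trick for smooth solutions; H\'elein-type compensation via the conservation law \( \div(u^i\nabla u^j-u^j\nabla u^i)=0 \); or the Sacks--Uhlenbeck perturbation argument). Moreover, the Caccioppoli/Morrey machinery you invoke is a theory for divergence-form operators with \emph{bounded} ellipticity ratio, whereas \( \div(f_\alpha\nabla\,\cdot\,) \) carries the unbounded weight \( f_\alpha \) --- precisely the difficulty you flagged at the outset and then did not circumvent. Finally, the inference that \( \nabla u\in L^p_{\mathrm{loc}} \) for all \( p \) makes ``\( f_\alpha \) locally bounded'' is false: boundedness of \( f_\alpha \) needs \( \nabla_A u\in L^\infty \), not \( \nabla_A u\in\bigcap_{p<\infty}L^p \).

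The standard repair --- and the route of the cited proof --- is to divide the \( u \)-equation by \( f_\alpha \) (legitimate since \( f_\alpha\ge\alpha\ge1 \)) and rewrite it in non-divergence form, \( \Delta u + 2(\alpha-1)\frac{(\nabla^2u\,\nabla_Au)\cdot\nabla_Au}{1+\lvert\nabla_Au\rvert^2}+\dots \) equal to a right-hand side in which every coupling term now enters with coefficient \( 1/f_\alpha\le1 \). The key structural fact, which your proposal never uses, is that this quasilinear correction is bounded by \( C(\alpha-1)\lvert\nabla^2u\rvert \) \emph{uniformly in} \( \lvert\nabla_Au\rvert \); hence for \( \alpha_0-1 \) small the operator is a small perturbation of \( \Delta \) in the sense needed for \( L^p \) theory, and no upper bound on \( f_\alpha \) is ever required. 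One then absorbs the critical quadratic term by H\"older, \( \bigl\lVert\lvert\nabla u\rvert^2\bigr\rVert_{L^p}\le\lVert\nabla u\rVert_{L^2}\lVert\nabla u\rVert_{L^{2p/(2-p)}} \), Sobolev embedding, and the smallness \( \epsilon_0 \) (all norms finite a priori because the pair is smooth), first for \( p<2 \) and then by bootstrap for all \( p \); and the estimate for \( a \) must be obtained \emph{before} the full strength of part (1), since \( W^{2,p} \) bounds on \( u \) for \( p\ge2 \) require \( a\in C^0\cap W^{1,q} \) with \( q>2 \), which \( a\in W^{1,2} \) does not give in dimension two. Your part (2), by contrast, is essentially the correct argument: identifying \( f_\alpha u^*\nabla_Au \), with \( f_\alpha\lvert\nabla_Au\rvert\le\alpha(1+\lvert\nabla_Au\rvert^2)^{\alpha-1/2} \), as the borderline term is exactly what produces the restrictions \( p<2 \) and \( \alpha<\alpha_p \); note only that your integrability count uses a bounded \( \alpha \)-energy (an \( L^{2\alpha} \) bound on \( \nabla_Au \)) that is not among the hypotheses of the proposition, though this is repairable, since \( \int_D(1+\lvert\nabla_Au\rvert^2)\le \lvert D\rvert+\epsilon_0^2 \) already gives the term in \( L^p \) for \( (\alpha-\tfrac12)p\le1 \), i.e.\ for \( p \) up to \( 2/(2\alpha-1)\to2 \), at the cost of a different threshold \( \alpha_p \).
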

\begin{rmk}
  By a bootstrap argument, the above theorem implies the following higher-order estimate
  \[
    \lVert u-\bar u \rVert_{W^{k,p}(D_{1/2})}+\lVert a \rVert_{W^{k,p}(D_{1/2})}\leq C\left( \lVert \nabla_Au \rVert_{L^2(D)}+\lVert F_A \rVert_{L^2(D)} \right),
  \]
  and 
  \[
    \sup_{D_{1/2}}\lvert \nabla _A^k(u-\bar u) \rvert+\sup_{D_{1/2}}\lvert \nabla _A^kF_A \rvert \leq C\left( \lVert \nabla_Au \rVert_{L^2(D)}+\lVert F_A \rVert_{L^2(D)} \right),
  \]
  where \( p>1 \) and \( k=2,3,\ldots \), \( C \) is a constant depending on \( k,p,\alpha_0,\epsilon_0, \lVert \mu \rVert_{W^{1,\infty}} \).
\end{rmk}
In the blow-up analysis, unlike the cases of harmonic maps or Dirac-harmonic maps, the \( \alpha \)-YMH functional is not scaling invariant. To address this, we introduce the following notions for the rescaled fields.
\begin{defn}
  Let \( (A,u) \) be a local section of \( \mathscr{A} \times \mathscr{S} \) over $D$ and \( \epsilon_\alpha\in(0,1] \), \( \lambda>0 \). The \emph{generalized \( \alpha \)-YMH functional} is defined by
  \[
    \mathcal{L}_{\alpha,\epsilon_\alpha}(A,u;D) := \frac{1}{2} \int_{D} \left( \left( \epsilon_\alpha + |\nabla_A u|^2 \right)^\alpha + \epsilon_\alpha^{\alpha-2} |F_A|^2 + \epsilon_\alpha^\alpha |\mu(u)|^2 \right) dx.
  \]
  A critical point \( (A,u) \) of \( \mathcal{L}_{\alpha,\epsilon_\alpha} \) is called a \emph{generalized \( \alpha \)-YMH field}.

  Moreover, by replacing \( \epsilon_\alpha \) with \( \lambda^2 \epsilon_\alpha \) in the definition above, we obtain the \emph{rescaled generalized \( \alpha \)-YMH functional} \( \mathcal{L}_{\alpha, \lambda^2 \epsilon_\alpha}, \) whose critical points are called \emph{rescaled generalized \( \alpha \)-YMH fields}, where \( \lambda > 0 \) is referred to as the \emph{rescaling factor}.
\end{defn}
Clearly, when \( \epsilon_\alpha = 1 \), the functional \( \mathcal{L}_{\alpha,\epsilon_\alpha} \) reduces to the standard \( \alpha \)-YMH functional over \( D \) (up to a constant). Moreover, if \( (A,u) \) is a generalized \( \alpha \)-YMH field, then under a conformal transformation of the domain \( D \), the pair \( (A,u) \) is transformed into a rescaled generalized \( \alpha \)-YMH field.

Specifically, consider the scaling map \( \rho_\lambda : x \mapsto \lambda x \), and define the rescaled fields by
\[
  a_\lambda(x) := \rho_\lambda^* a(x) = a_i(\lambda x)\lambda dx^i = \lambda a(\lambda x),\quad 
  u_\lambda(x) := \rho_\lambda^* u(x) = u(\lambda x).
\]
Then it is straightforward to verify that
\begin{align*}
  \nabla_{A_\lambda} u_\lambda(x) &= du_\lambda(x) + a_\lambda(x) \cdot u_\lambda(x) = \lambda \nabla_A u(\lambda x),\\
  F_{A_\lambda}(x) &= d a_\lambda(x) + a_\lambda(x) \wedge a_\lambda(x) = \lambda^2 F_A(\lambda x).
\end{align*}
Assuming \( (A,u) \) is a generalized \( \alpha \)-YMH field, we compute
\begin{equation}\label{eq:scaled-energy-relation}
  \begin{split}
      \mathcal{L}_{\alpha,\epsilon_\alpha}(A,u)
    &=\frac{1}{2}\int_D\left( \left( \epsilon_\alpha+\lvert \nabla _Au \rvert^2 \right)^\alpha+\epsilon_\alpha^{\alpha-2}\lvert F_A \rvert^2+\epsilon_\alpha^\alpha\lvert \mu(u) \rvert^2 \right) dy\\
    &=\frac{1}{2}\lambda^{2-2\alpha}\int_{D_{\frac{1}{\lambda}}}\left( \left( \lambda^2\epsilon_\alpha+\lvert \nabla _{A_\lambda}u_\lambda \rvert^2 \right)^\alpha+(\lambda^2\epsilon_\alpha)^{\alpha-2}\lvert F_{A_\lambda} \rvert^2+(\lambda^2\epsilon_\alpha)^\alpha\lvert \mu(u_\lambda) \rvert^2 \right) dx,
  \end{split}
\end{equation}
where we have performed the change of variables \( y = \lambda x \). It follows that \( (A_\lambda, u_\lambda) \) is a rescaled generalized \( \alpha \)-YMH field, corresponding to the rescaled parameter \( \lambda^2\epsilon_\alpha \) and with rescale factor \( \lambda \).

The Euler--Lagrange equation for generalized \( \alpha \)-YMH fields can be deduced from \eqref{eq:alpha-YMH-local}. In particular, observe that if \( (A,u) \) is a critical point of \( \mathcal{L}_{\alpha,1} \), then for any \( \lambda > 0 \), the rescaled pair \( (A_\lambda, u_\lambda) \) is a critical point of \( \mathcal{L}_{\alpha, \lambda^2} \). Thus, the Euler--Lagrange equations for \( \mathcal{L}_{\alpha, \epsilon_\alpha} \) can be deduced by substituting \( \epsilon_\alpha \) for \( \lambda^2 \) in the rescaled system.

By direct computation, starting from \eqref{eq:alpha-YMH-local}, we obtain for the rescaled pair \( (A_\lambda, u_\lambda) \):
\[
  \begin{cases}
    \div\left( f_{\alpha,\lambda}\nabla_{A_\lambda}u_\lambda \right) + f_{\alpha,\lambda} |\nabla_{A_\lambda} u_\lambda|^2 u_\lambda + f_{\alpha,\lambda}\left( a_\lambda \cdot d u_\lambda + a_\lambda \cdot a_\lambda \cdot u_\lambda \right) - \lambda^{2\alpha} \mu(u_\lambda) \cdot \nabla \mu(u_\lambda) = 0, \\[1.5ex]
    \Delta a_\lambda - \lambda^{2-2\alpha} f_{\alpha,\lambda} u_\lambda^* \nabla_{A_\lambda} u_\lambda + \langle d a_\lambda, a_\lambda \rangle + \langle a_\lambda, [a_\lambda, a_\lambda] \rangle - \left[ F_{A_\lambda} \lh a_\lambda^\sharp \right] = 0,
  \end{cases}
\]
where \( f_{\alpha,\lambda} = \alpha \left( \lambda^2 + |\nabla_{A_\lambda} u_\lambda|^2 \right)^{\alpha-1} \). In deriving the second equation, we use the following rescaling identity: for any test matrix-valued 1-form \( b \),
\[
  \langle u^* \nabla_A u, b \rangle := \langle \nabla_A u, b u \rangle = \langle \lambda^{-2} \nabla_{A_\lambda} u_\lambda, \lambda^{-1} b_\lambda u_\lambda \rangle = \lambda^{-3} \langle \nabla_{A_\lambda} u_\lambda, b_\lambda u_\lambda \rangle = \lambda^{-3} \langle u_\lambda^* \nabla_{A_\lambda} u_\lambda, b_\lambda \rangle,
\]
which implies that
\[
  u^* \nabla_A u = \lambda^{-3} u_\lambda^* \nabla_{A_\lambda} u_\lambda.
\]

Therefore, after replacing \( \lambda^2 \) by \( \epsilon_\alpha \) throughout, the Euler--Lagrange equations for the generalized \( \alpha \)-YMH fields become
\begin{equation}\label{eq:generalized-alpha-YMH-local}
  \begin{cases}
    \div \left( f_{\alpha, \epsilon_\alpha} \nabla_A u \right)
    + f_{\alpha, \epsilon_\alpha} |\nabla_A u|^2 u
    + f_{\alpha, \epsilon_\alpha} \left( a \cdot d u + a \cdot a \cdot u \right)
    - \epsilon_\alpha^{\alpha} \mu(u) \nabla \mu(u) = 0, \\[1.5ex]
    \Delta a
    - \epsilon_\alpha^{1-\alpha} f_{\alpha, \epsilon_\alpha} u^* \nabla_A u
    + \langle d a, a \rangle
    + \langle a, [a, a] \rangle
    - \left[ F_A \lh a^\sharp \right] = 0,
  \end{cases}
\end{equation}
where \( f_{\alpha, \epsilon_\alpha} = \alpha \left( \epsilon_\alpha + |\nabla_A u|^2 \right)^{\alpha-1} \).

The following proposition provides a small energy estimate for generalized \( \alpha \)-YMH fields. The proof follows closely that of \cref{prop:small-energy-estimate}; see also \cite{AiSongZhu2019boundary}*{Cor.~1}.

\begin{prop}[Rescaled small energy estimates; \cite{AiSongZhu2019boundary}]\label{prop:small-energy-estimates-scaled}
  Suppose \( (A,u) \) is a smooth generalized \( \alpha \)-YMH field on the unit disc \( D \subset \mathbb{R}^2 \) satisfying \eqref{eq:generalized-alpha-YMH-local}, with \( \lim_{\alpha\to1}\epsilon_\alpha^{\alpha-1}\in(\beta_0,1] \) for some constant \( \beta_0>0 \), and \( \|F_A\|_{L^2(D)} \leq \delta_0 \), where \( \delta_0 \) is the uniform constant from the existence theorem for the Coulomb gauge. Then there exist constants \( \epsilon_0 > 0 \) and \( \alpha_0 > 1 \) such that, for all \( 1 < \alpha < \alpha_0 \) and all \( 1< p < +\infty \), if
  \[
    \|\nabla_A u\|_{L^2(D)} \leq \epsilon_0,
  \]
  the following estimates hold for any \( k = 2, 3, \ldots \):
  \[
    \|u - \bar{u}\|_{W^{k,p}(D_{1/2})} \leq C \big( \|\nabla_A u\|_{L^2(D)} + \|F_A\|_{L^2(D)} \big),
  \]
  and
  \[
    \|a\|_{W^{k,p}(D_{1/2})} \leq C \big( \|\nabla_A u\|_{L^2(D)} + \|F_A\|_{L^2(D)} \big),
  \]
  where \( A = d + a \) locally, \( \bar{u} \) denotes the average of \( u \) over \( D \), and \( C \) is a constant depending only on \( \delta_0 \), \( \|\mu\|_{W^{1,\infty}} \), \( p \), \( k \), \( \alpha_0 \), and \( \epsilon_0 \).
\end{prop}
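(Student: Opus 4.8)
The plan is to follow the proof of the unscaled small-energy estimate (\cref{prop:small-energy-estimate}) almost verbatim, isolating the single place where the parameter \( \epsilon_\alpha \) genuinely intervenes. The scheme is a bootstrap. First I would establish \( \epsilon \)-regularity: a uniform \( L^\infty \) bound on \( \nabla_A u \) over an interior disc (equivalently a uniform \( C^0 \) bound on \( f_{\alpha,\epsilon_\alpha} \)) under the two smallness hypotheses \( \|\nabla_A u\|_{L^2(D)}\le\epsilon_0 \) and \( \|F_A\|_{L^2(D)}\le\delta_0 \). Then, treating the coefficients as bounded and the nonlinear terms as controlled sources, I would apply Calder\'on--Zygmund \( L^p \) theory to the two equations in \eqref{eq:generalized-alpha-YMH-local} and iterate up to \( W^{k,p} \) for every \( k\ge2 \). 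The only structural novelty relative to the unscaled system is the presence of the weights \( \epsilon_\alpha^{\alpha} \) and \( \epsilon_\alpha^{1-\alpha} \) multiplying the potential term and the coupling term \( f_{\alpha,\epsilon_\alpha}u^*\nabla_A u \), respectively; the hypothesis \( \lim_{\alpha\to1}\epsilon_\alpha^{\alpha-1}\in(\beta_0,1] \) is exactly what keeps these weights uniformly bounded as \( \alpha\to1 \) and \( \epsilon_\alpha\to0 \).

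So I would first record the elementary consequences of the hypothesis. Since \( \alpha>1 \), the inequality \( \epsilon_\alpha^{\alpha-1}\le1 \) forces \( \epsilon_\alpha\le1 \), whence \( \epsilon_\alpha^\alpha\le1 \) and the potential contribution \( \epsilon_\alpha^\alpha\mu(u)\nabla\mu(u) \) is controlled by \( \|\mu\|_{W^{1,\infty}} \) alone; for \( \alpha \) near \( 1 \) the hypothesis also gives \( \epsilon_\alpha^{\alpha-1}>\beta_0/2 \), hence \( \epsilon_\alpha^{1-\alpha}<2/\beta_0 \). Granting a pointwise bound \( |\nabla_A u|^2\le M \), these yield the uniform two-sided control
\[
  \tfrac12\alpha\beta_0\le\alpha\epsilon_\alpha^{\alpha-1}\le f_{\alpha,\epsilon_\alpha}=\alpha(\epsilon_\alpha+|\nabla_A u|^2)^{\alpha-1}\le\alpha(1+M)^{\alpha-1},
\]
together with, after splitting according to whether \( |\nabla_A u|^2 \) is smaller or larger than \( \epsilon_\alpha \),
\[
  \epsilon_\alpha^{1-\alpha}f_{\alpha,\epsilon_\alpha}=\alpha\Bigl(1+\tfrac{|\nabla_A u|^2}{\epsilon_\alpha}\Bigr)^{\alpha-1}\le\frac{2\alpha}{\beta_0}(1+M)^{\alpha-1}.
\]
The last bound is finite precisely because \( \epsilon_\alpha^{1-\alpha} \) is; with both coefficients pinned between positive constants, the system \eqref{eq:generalized-alpha-YMH-local} becomes coefficient-wise indistinguishable from the unscaled one.

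The heart of the matter is the \( \epsilon \)-regularity step that supplies the bound \( M \), and this is where I expect the main difficulty. I would run it as in the unscaled case: first impose the Coulomb gauge (available since \( \|F_A\|_{L^2}\le\delta_0 \)), so that \( d^*a=0 \) and the second equation reads \( \Delta a=(\text{controlled source}) \), giving \( a \) small in \( W^{2,p} \) and hence in \( L^\infty \) on an interior disc; then feed this control of \( a \) into the first equation and derive a Bochner-type differential inequality for the energy density \( |\nabla_A u|^2 \) whose quadratic right-hand side is absorbed by a Moser iteration driven by the smallness of \( \|\nabla_A u\|_{L^2(D)} \). The delicate point—and the crux of the whole proposition—is that every constant produced by this iteration must remain uniform as \( \alpha\to1 \) and \( \epsilon_\alpha\to0 \); the lower ellipticity bound \( f_{\alpha,\epsilon_\alpha}\ge\tfrac12\alpha\beta_0>0 \) and the upper bound on \( \epsilon_\alpha^{1-\alpha}f_{\alpha,\epsilon_\alpha} \) recorded above are exactly what secure this uniformity, so that no constant degenerates with the parameters. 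Since the iteration uses only the \( L^2 \) smallness of \( \nabla_A u \) and of \( F_A \), and never assumes \( M \) a priori, the argument is not circular.

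With \( |\nabla_A u|\in L^\infty \) and \( a\in L^\infty \) in hand on an interior disc, the coefficients are bounded and \eqref{eq:generalized-alpha-YMH-local} is uniformly elliptic with sources in every \( L^p \). A standard bootstrap—alternating \( L^p \) estimates for \( \Delta a \) and for \( \operatorname{div}(f_{\alpha,\epsilon_\alpha}\nabla_A u) \) on shrinking discs, using the chain and product rules together with \( \mu\in W^{1,\infty} \)—then upgrades \( u-\bar u \) and \( a \) to \( W^{k,p}(D_{1/2}) \) for all \( k\ge2 \) and \( 1<p<\infty \), with the asserted dependence of \( C \). The scaling identity \( u^*\nabla_A u=\lambda^{-3}u_\lambda^*\nabla_{A_\lambda}u_\lambda \) recorded before \eqref{eq:generalized-alpha-YMH-local} confirms that the \( \epsilon_\alpha \)-weights are the sole new feature, so the conclusion follows from the unscaled argument once the coefficient bounds of the second paragraph are in force.
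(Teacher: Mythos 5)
Your proposal is correct and takes essentially the same route as the paper, which gives no independent proof of this proposition at all: it simply remarks that the proof ``follows closely that of \cref{prop:small-energy-estimate}'' and cites \cite{AiSongZhu2019boundary}*{Cor.~1}, which is precisely your plan of rerunning the unscaled Coulomb-gauge/bootstrap argument while tracking the \( \epsilon_\alpha \)-weights. Your isolation of the role of the hypothesis \( \lim_{\alpha\to1}\epsilon_\alpha^{\alpha-1}\in(\beta_0,1] \) — a uniform lower ellipticity bound \( f_{\alpha,\epsilon_\alpha}\geq \tfrac12\alpha\beta_0 \) and a uniform bound on \( \epsilon_\alpha^{1-\alpha} \) controlling the coupling term in the connection equation (noting only the cosmetic point that \( \epsilon_\alpha\leq1 \) is part of the definition of the generalized functional rather than a consequence of the limit hypothesis) — is exactly what makes the unscaled argument carry over with constants independent of \( \alpha \) and \( \epsilon_\alpha \).
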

With the small energy estimates of \cref{prop:small-energy-estimate}, items \eqref{item:prop-small-energy-I} and \eqref{item:prop-small-energy-II} in \cref{prop:bubbing} follow directly. Near each energy concentration point \( x^j \in \mathcal{S} \), we perform the rescaling \( (A,u) \mapsto (A_\lambda, u_\lambda) \) with \( \lambda = r_\alpha^j \), and apply the rescaled small energy estimates \cref{prop:small-energy-estimates-scaled}. This yields item \eqref{item:prop-small-energy-III} in \cref{prop:bubbing}.
\subsection{Analytic lemmas in Lorentz spaces}
In this subsection, we collect several analytic results, particularly elliptic estimates in Lorentz spaces, which are essential for the neck analysis in our compactness arguments. Recall that for a measurable function \( f \) on \( \Omega \subset \mathbb{R}^m \), the distribution function and the non-increasing rearrangement are defined as
\[
  \lambda_f(s) := \left| \left\{ x \in \Omega : |f(x)| \geq s \right\} \right|, \qquad
  f^*(t) := \inf \left\{ s \geq 0 : \lambda_f(s) \leq t \right\},
\]
respectively. One easily checks that
\[
  \left| \left\{ t > 0 : f^*(t) \geq s \right\} \right| = \lambda_f(s),
\]
so the measure of \( \{t > 0 : f^*(t) \geq s\} \) equals the measure of \( \{x \in \Omega : |f(x)| \geq s\} \).

The Lorentz space \( L^{p,q} \) of measurable function $f \mathpunct{:} \Omega\to \mathbb{R}$, where \( 0 < p, q < +\infty \) and $\Omega$ is an open subset in $\mathbb{R}^m$, is defined by
\[
  L^{p,q}(\Omega) := \left\{ f : \|f\|_{L^{p,q}} := \left( \frac{q}{p} \int_0^\infty \left( t^{1/p} f^*(t) \right)^q \frac{dt}{t} \right)^{1/q} < +\infty \right\},
\]
where we normalize so that \( \|\chi_{[0,1]}\|_{L^{p,q}} = 1 \). When \( q = \infty \), the Lorentz space \( L^{p, \infty} \) is defined by
\[
  L^{p,\infty}(\Omega) := \left\{ f : \|f\|_{L^{p,\infty}} = \sup_{t > 0} t^{1/p} f^*(t) < +\infty \right\}.
\]
In general, \( \|\cdot\|_{L^{p,q}} \) does not define a norm, except when \( 1 \leq q \leq p \), in which case it is indeed a norm. For \( 1 < q' < q'' \), the following inclusions hold:
\[
  L^{p,1} \subset L^{p,q'} \subset L^{p,q''} \subset L^{p,\infty}, \qquad L^{p,p} = L^p.
\]
Moreover, if \( |\Omega| < +\infty \), then for any \( q \) and \( q' \), we have \( L^{p',q'} \subset L^{p,q} \) whenever \( p < p' \). Finally, for \( 1 < p < +\infty \) and \( 1 \leq q \leq +\infty \), the dual space of \( L^{p,q} \) is given by \( L^{\frac{p}{p-1}, \frac{q}{q-1}} \).
\begin{lem}[\cite{LiZhu2019Energy}*{Lem.~2.4}]\label{lem:div-elliptic-estimate}
  Suppose \( \vec{F} \) is a vector field supported in \( D \) and \( u \) solves
  \[
    \begin{cases}
      \Delta u = \div \vec{F}, & x \in D, \\
      u = 0, & x \in \partial D,
    \end{cases}
  \]
  then the following elliptic estimate holds:
  \[
    \|\nabla u\|_{L^{2,q}(D)} \leq C_q \|\vec{F}\|_{L^{2,q}(D)}, \quad \forall\, 1 \leq q \leq \infty.
  \]
\end{lem}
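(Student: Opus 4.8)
The plan is to realize the solution operator $T\colon \vec F \mapsto \nabla u$ as a Calderón--Zygmund-type singular integral and then pass from the classical $L^p$ bounds to the Lorentz scale by real interpolation. First I would record the kernel representation. For the Dirichlet problem $\Delta u = \div\vec F$ in $D$ with $u|_{\partial D}=0$, writing $u$ through the Dirichlet Green's function $G(x,y)=\Phi(x-y)-h(x,y)$ (with $\Phi$ the logarithmic fundamental solution and $h$ the smooth harmonic corrector) and integrating by parts against the compactly supported field $\vec F$ (the boundary term drops because $G(x,\cdot)$ vanishes on $\partial D$) gives
\[
  \nabla u(x) = -\int_D \nabla_x\nabla_y G(x,y)\,\vec F(y)\,dy.
\]
The leading kernel $\nabla_x\nabla_y\Phi(x-y)$ is homogeneous of degree $-2$ in $\mathbb{R}^2$ and satisfies the standard size and cancellation conditions, while the corrector contributes a kernel with an integrable singularity. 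Hence $T$ is of strong type $(p,p)$ for every $1<p<\infty$; this is precisely the classical $W^{1,p}$ estimate for the Laplacian with divergence-form data.

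With the strong-type $L^p$ bounds in hand, I would fix exponents $1<p_0<2<p_1<\infty$ and invoke the real (Marcinkiewicz) interpolation theorem. Choosing $\theta\in(0,1)$ by $\tfrac12=\tfrac{1-\theta}{p_0}+\tfrac{\theta}{p_1}$, the real interpolation functor identifies
\[
  (L^{p_0}(D),L^{p_1}(D))_{\theta,q}=L^{2,q}(D),\qquad 1\le q\le\infty,
\]
and since $T$ maps each endpoint space $L^{p_i}(D)$ boundedly to itself, it maps the interpolation space to itself, yielding $\|\nabla u\|_{L^{2,q}(D)}\le C_q\|\vec F\|_{L^{2,q}(D)}$. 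Because $D$ has finite measure and $\vec F$ is supported in $D$, the inclusions among Lorentz spaces recorded above cause no difficulty, and the $(\cdot)_{\theta,q}$ functor covers the endpoints $q=1$ and $q=\infty$ as well—precisely the cases lying beyond the reach of complex interpolation.

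The only genuinely delicate point is the first step: verifying that the Green's-function kernel defines an admissible Calderón--Zygmund operator up to the boundary, so that the $L^p$ bounds hold on all of $D$ rather than merely on interior subdiscs. Two routes handle this. One may check the Hörmander cancellation condition directly for $\nabla_x\nabla_y G$, the smooth corrector $h$ being harmless; alternatively, extend $\vec F$ by zero to $\mathbb{R}^2$, treat the Newtonian-potential part $\tilde u$ by the whole-space Riesz transforms, and absorb the harmonic remainder $w=u-\tilde u$—which solves $\Delta w=0$ in $D$ with boundary data $-\tilde u|_{\partial D}$—via elliptic boundary regularity. Either way the $L^p$ theory is standard, so in the end I would simply cite it and let the real interpolation argument deliver the full Lorentz-space range.
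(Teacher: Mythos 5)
The paper offers no internal proof of this lemma --- it is imported verbatim from \cite{LiZhu2019Energy}*{Lem.~2.4} --- and your argument (classical \( W^{1,p} \) bounds for the solution operator \( \vec F \mapsto \nabla u \) of the Dirichlet problem with divergence-form data, followed by the real interpolation identity \( (L^{p_0}(D),L^{p_1}(D))_{\theta,q}=L^{2,q}(D) \) with \( p_0<2<p_1 \)) is correct and is essentially the standard proof underlying the cited result; in particular, you rightly insist on the real rather than the complex method, which is exactly what makes the endpoint cases \( q=1 \) and \( q=\infty \) accessible. The one imprecision is the claim that the harmonic corrector \( h \) contributes only an ``integrable singularity'': near \( \partial D \) its mixed second derivatives are genuinely of Calder\'on--Zygmund size (for the disc they arise from the Kelvin-reflected kernel), but this does not damage the proof, since your alternative route --- extending \( \vec F \) by zero, treating the whole-plane part by Riesz transforms, and absorbing the harmonic remainder via boundary elliptic regularity --- or a direct citation of the textbook \( W^{1,p} \) theory for smooth domains settles the endpoint \( L^p \) bounds that the interpolation step actually needs.
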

\begin{rmk}
  For \( q=2 \), we have \( C_2=1 \), see \cite{LiZhu2019Energy}*{Lem.~2.3}.
\end{rmk}

The next lemma is a standard elliptic \( L^p \) estimate together with H\"older's inequality for Lorentz spaces.
\begin{lem}\label{lem:elliptic-estimate}
  Assume \( F \in L^p(D) \) for some \( p > 1 \), and \( u \) solves
  \[
    \begin{cases}
      \Delta u = F, & x \in D, \\
      u = 0, & x \in \partial D,
    \end{cases}
  \]
  then
  \[
    \|\nabla u\|_{L^{2,1}(D)} \leq C \|F\|_{L^p(D)}.
  \]
\end{lem}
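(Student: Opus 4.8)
The plan is to chain three standard ingredients: the Calderón--Zygmund estimate for the Dirichlet Laplacian, the Lorentz-refined Sobolev embedding, and the bounded-domain inclusion of Lorentz spaces recorded above. First I would reduce to the subcritical range \( 1 < p < 2 \). Since \( |D| < \infty \), Hölder's inequality gives \( L^p(D) \subset L^{p_0}(D) \) with \( \|F\|_{L^{p_0}(D)} \le C\|F\|_{L^p(D)} \) for any fixed \( p_0 \in (1,2) \) satisfying \( p_0 \le p \); hence it suffices to establish the bound for an exponent lying in \( (1,2) \), which I henceforth continue to call \( p \).

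Next, standard elliptic \( L^p \) regularity for the Dirichlet problem yields \( u \in W^{2,p}(D) \cap W^{1,p}_0(D) \) together with
\[
\|\nabla u\|_{W^{1,p}(D)} \le \|u\|_{W^{2,p}(D)} \le C\|F\|_{L^p(D)},
\]
so that each component of \( \nabla u \) belongs to \( W^{1,p}(D) \).

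The crux is the sharp Sobolev embedding in the Lorentz scale. After extending \( \nabla u \) to a compactly supported function \( f \in W^{1,p}(\mathbb{R}^2) \) of comparable norm, I would invoke the representation \( f(x) = c\int_{\mathbb{R}^2} \frac{x-y}{|x-y|^2}\cdot \nabla f(y)\,dy \), whose kernel has pointwise size \( |x|^{-1} \in L^{2,\infty}(\mathbb{R}^2) \), and apply O'Neil's convolution inequality (the Young/Hölder-type inequality for Lorentz spaces). Equivalently, the Riesz potential \( I_1 \) of order \( 1 \) in dimension \( 2 \) maps \( L^{p,q}(\mathbb{R}^2) \to L^{p^{*},q}(\mathbb{R}^2) \) boundedly with the \emph{same} secondary index \( q \) and \( \tfrac{1}{p^{*}} = \tfrac1p - \tfrac12 \); pairing \( |x|^{-1}\in L^{2,\infty} \) against \( \nabla f \in L^p = L^{p,p} \) therefore gives
\[
\|\nabla u\|_{L^{p^{*},p}(D)} \le C\|\nabla u\|_{W^{1,p}(D)} \le C\|F\|_{L^p(D)}.
\]
Since \( 1 < p < 2 \) forces \( p^{*} > 2 \), and \( |D| < \infty \), the bounded-domain inclusion \( L^{p^{*},p}(D) \subset L^{2,1}(D) \) (valid for any secondary indices once the primary index strictly decreases, here from \( p^{*} \) down to \( 2 \)) closes the argument.

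The hard part is securing the correct \emph{secondary} Lorentz index in the embedding step: the classical Sobolev theorem only produces \( \nabla u \in L^{p^{*}} = L^{p^{*},p^{*}} \), whereas the argument genuinely requires the finer target \( L^{p^{*},p} \). It is precisely the small second index \( p \)---obtained by convolving against the weak-\( L^2 \) kernel \( |x|^{-1} \) rather than through a naive \( L^p \)-Sobolev bound---that, after trading the primary index \( p^{*}>2 \) down to \( 2 \) on the finite-measure domain, can be pushed all the way to \( 1 \) and thereby reach \( L^{2,1} \).
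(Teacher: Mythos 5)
Your proof is correct and is precisely the argument the paper has in mind: the paper states this lemma without proof, describing it only as ``a standard elliptic \( L^p \) estimate together with H\"older's inequality for Lorentz spaces'', and your chain --- reduction to \( 1<p<2 \) on the bounded domain, the Calder\'on--Zygmund estimate \( \|u\|_{W^{2,p}}\le C\|F\|_{L^p} \), the Lorentz-sharp Sobolev embedding \( W^{1,p}\hookrightarrow L^{p^*,p} \) obtained from the Riesz-potential representation and O'Neil's convolution inequality, and finally the finite-measure inclusion \( L^{p^*,p}(D)\subset L^{2,1}(D) \) (the fact recorded in the paper's Lorentz-space preliminaries) --- fills in exactly those details. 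Your closing remark correctly identifies the only delicate point, namely that the secondary index must be tracked through the embedding, since the naive conclusion \( \nabla u\in L^{p^*}=L^{p^*,p^*} \) would still suffice here only because the primary index strictly drops from \( p^*>2 \) to \( 2 \), which is the mechanism that pushes the secondary index down to \( 1 \).
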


The following lemma is a classical result in harmonic analysis. It asserts that for the Poisson equation with a right-hand side of Jacobian structure, one gains additional regularity---a phenomenon known as compensation.
\begin{lem}[\citelist{\cite{Wente1969Existence}\cite{CoifmanLionsMeyerSemmes1993Compensated}\cite{Helein2002Harmonic}*{Sect.~3.2 and 3.3}}]\label{lem:Wente-L21}
  Suppose \( f,g \in W^{1,2}(\mathbb{R}^2) \), and \( u \in L^1(\mathbb{R}^2) \) solves
  \[
    \Delta u = \nabla f \cdot \nabla^\perp g,
  \]
  then \( \nabla f \cdot \nabla^\perp g \) belongs to the Hardy space \( \mathcal{H}^1(\mathbb{R}^2) \), and
  \[
    \|\nabla u\|_{L^{2,1}} \leq C \|\nabla f \cdot \nabla^\perp g\|_{\mathcal{H}^1} \leq C \|\nabla f\|_{L^2} \|\nabla g\|_{L^2}.
  \]
\end{lem}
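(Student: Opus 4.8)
The plan is to prove the estimate by decoupling it into two independent harmonic-analytic facts, read from the two inequalities in the statement from right to left: first, that the compensated structure of the right-hand side forces it into the Hardy space $\mathcal{H}^1(\mathbb{R}^2)$ with the claimed bilinear bound; and second, that inverting the Laplacian on $\mathcal{H}^1$ data gains a full derivative into the sharp Lorentz scale $L^{2,1}$, strictly finer than the $L^{2,\infty}$ one would get from $L^1$ data. Throughout I write $\nabla^\perp g = (-\partial_2 g, \partial_1 g)$, so that $\nabla f \cdot \nabla^\perp g = \partial_2 f\,\partial_1 g - \partial_1 f\,\partial_2 g$ is, up to sign, the Jacobian determinant of the pair $(f,g)$; note in particular that since $\div \nabla^\perp g = 0$ one has the pure-divergence form $\nabla f\cdot\nabla^\perp g = \div(f\,\nabla^\perp g)$.

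For the first step I would invoke the div--curl structure underlying the Coifman--Lions--Meyer--Semmes theory. Setting $E := \nabla f$ and $B := \nabla^\perp g$, both lie in $L^2(\mathbb{R}^2;\mathbb{R}^2)$ and satisfy $\curl E = 0$ and $\div B = 0$ identically. The div--curl lemma in Hardy space then yields $E\cdot B \in \mathcal{H}^1(\mathbb{R}^2)$ together with
\[
  \|\nabla f\cdot\nabla^\perp g\|_{\mathcal{H}^1} = \|E\cdot B\|_{\mathcal{H}^1}\le C\,\|E\|_{L^2}\|B\|_{L^2} = C\,\|\nabla f\|_{L^2}\|\nabla g\|_{L^2},
\]
which is the second inequality. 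The mechanism I would record is that, using the divergence form above, one tests $\nabla f\cdot\nabla^\perp g$ against a normalized mollifier and integrates by parts to transfer the derivative onto the smooth test function; the grand maximal function characterization of $\mathcal{H}^1$ then converts the resulting cancellation into the Hardy-space bound. One also records that every $\mathcal{H}^1$ function has vanishing integral, a fact used implicitly below.

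For the second step I would argue at the operator level: the map $\nabla\Delta^{-1}$ is bounded from $\mathcal{H}^1(\mathbb{R}^2)$ into $L^{2,1}(\mathbb{R}^2)$. Writing $h := \nabla f\cdot\nabla^\perp g$ and factoring on the Fourier side, $\partial_j \Delta^{-1} = R_j I_1$, where $R_j$ is the $j$-th Riesz transform and $I_1$ the Riesz potential of order one. Since Riesz transforms are bounded on $\mathcal{H}^1(\mathbb{R}^2)$ and the endpoint Sobolev embedding for Hardy spaces reads $I_1:\mathcal{H}^1(\mathbb{R}^2)\to L^{2,1}(\mathbb{R}^2)$, one obtains
\[
  \|\partial_j u\|_{L^{2,1}} = \|I_1 R_j h\|_{L^{2,1}} \le C\,\|R_j h\|_{\mathcal{H}^1}\le C\,\|h\|_{\mathcal{H}^1},
\]
which is the first inequality; combining with the first step completes the proof. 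Here $u$ is normalized as the Newtonian potential $E_0 * h$ with $E_0(x) = \tfrac{1}{2\pi}\log|x|$, the solution singled out by the $L^1$ convention of the hypothesis, so that $\nabla u = (\nabla E_0)*h$. The embedding $I_1:\mathcal{H}^1\to L^{2,1}$ I would justify through the atomic decomposition of $\mathcal{H}^1$: for a single atom the vanishing-mean condition upgrades the weak-type Hardy--Littlewood--Sobolev bound $I_1:L^1\to L^{2,\infty}$ into a uniform $L^{2,1}$ estimate, and since $L^{2,1}$ is a genuine normed space these bounds sum over the decomposition.

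The main obstacle is that \emph{both} gains are genuine compensation phenomena and cannot be extracted from elementary elliptic theory. The crude inputs $h\in L^1$ and $\nabla E_0\in L^{2,\infty}(\mathbb{R}^2)$ give, via convolution with an $L^1$ kernel, only $\nabla u\in L^{2,\infty}$, which is strictly weaker than the asserted $L^{2,1}$; likewise, neither the $L^p$ estimate of \cref{lem:elliptic-estimate} nor the $L^{2,q}$ estimate of \cref{lem:div-elliptic-estimate} applied to $\vec F = f\,\nabla^\perp g$ suffices, because $f\in W^{1,2}(\mathbb{R}^2)$ need not be bounded and $\|f\,\nabla^\perp g\|_{L^2}$ is not controlled by $\|\nabla f\|_{L^2}\|\nabla g\|_{L^2}$. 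The entire strength of the statement lies in replacing $L^1$ by the smaller space $\mathcal{H}^1$ (first step) and in exploiting atomic cancellation to replace $L^{2,\infty}$ by $L^{2,1}$ (second step), both of which rest on the maximal-function characterization and atomic decomposition of Hardy space rather than on Calderón--Zygmund $L^p$ theory. The only routine technical point is the normalization of $u$ as the Newtonian potential, which is precisely where the hypothesis $u\in L^1(\mathbb{R}^2)$ enters.
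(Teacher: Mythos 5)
This lemma is stated in the paper as a classical result, with citations to Wente, Coifman--Lions--Meyer--Semmes, and H\'elein, and no proof is given in the text; your argument is a correct reconstruction of exactly the standard proof behind those citations. Specifically, your two steps — the div--curl theorem applied to the curl-free field \( \nabla f \) and the divergence-free field \( \nabla^\perp g \) to get the \( \mathcal{H}^1 \) bilinear bound, followed by the factorization \( \partial_j\Delta^{-1}=R_jI_1 \) with the atomic-decomposition proof that \( I_1:\mathcal{H}^1(\mathbb{R}^2)\to L^{2,1}(\mathbb{R}^2) \) — are precisely the content of the cited Sections 3.2 and 3.3 of H\'elein's book, including the correct identification of where the hypothesis \( u\in L^1 \) enters (uniqueness of the solution, via Liouville, so that \( \nabla u=(\nabla E_0)*h \)).
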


We also record the following variant, due to F. Bethuel, which is particularly useful in the presence of Lorentz space regularity.
\begin{lem}[\citelist{\cite{Helein2002Harmonic}*{Thm.~3.4.5}\cite{LiLiuZhuZhu2021Energy}*{Lem.~2.4}}]\label{lem:Wente-Bethuel-Helein}
  Suppose \( f,g \in W_0^{1,2}(D) \) and \( \nabla f \in L^{2,\infty}(D) \). If \( u \in W_0^{1,2}(D) \) solves
  \[
    \Delta u = \nabla f \cdot \nabla^\perp g,
  \]
  then
  \[
    \|\nabla u\|_{L^2(D)} \leq C \|\nabla f\|_{L^{2,\infty}(D)} \|\nabla g\|_{L^2(D)}.
  \]
\end{lem}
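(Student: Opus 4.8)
The plan is to argue by duality, reducing the estimate to a single scalar pairing via Hodge decomposition, and then to route the compensation from Wente's inequality (\cref{lem:Wente-L21}) so that the ``bad'' factor $\nabla f \in L^{2,\infty}$ is paired against a \emph{gained} $L^{2,1}$ factor rather than against an $L^2$ factor. Since $u \in W_0^{1,2}(D)$, I would start from
\[
  \|\nabla u\|_{L^2(D)} = \sup\left\{ \int_D \nabla u \cdot \vec{X}\,dx : \vec{X} \in C_c^\infty(D;\mathbb{R}^2),\ \|\vec{X}\|_{L^2(D)} \le 1 \right\},
\]
and for each such $\vec X$ perform the Hodge decomposition $\vec X = \nabla\alpha + \nabla^\perp\beta$, where $\alpha \in W_0^{1,2}(D)$ solves $\Delta\alpha = \operatorname{div}\vec X$. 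Testing this equation against $\alpha$ gives $\|\nabla\alpha\|_{L^2} \le \|\vec X\|_{L^2} \le 1$. Because $\nabla^\perp\beta = \vec X - \nabla\alpha$ is divergence-free and $u$ vanishes on $\partial D$, the contribution $\int_D \nabla u \cdot \nabla^\perp\beta$ vanishes, and integrating by parts in the surviving term yields
\[
  \int_D \nabla u \cdot \vec X\,dx = \int_D \nabla u \cdot \nabla\alpha\,dx = -\int_D \alpha\, \Delta u\,dx = -\int_D \alpha\,\nabla f \cdot \nabla^\perp g\,dx.
\]

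It remains to bound $\left| \int_D \alpha\,\nabla f \cdot \nabla^\perp g\,dx \right|$ by $C\|\nabla f\|_{L^{2,\infty}}\|\nabla g\|_{L^2}\|\nabla\alpha\|_{L^2}$. Here is the crux. I would introduce the Wente auxiliary function $h$ solving $\Delta h = \nabla\alpha\cdot\nabla^\perp g$ (extending $\alpha,g$ by zero to $\mathbb{R}^2$ if convenient). Since $\alpha,g \in W^{1,2}$, \cref{lem:Wente-L21} applies to the pair $(\alpha,g)$ --- both of which live only in $L^2$ at the gradient level --- and produces the gain $\nabla h \in L^{2,1}$ with $\|\nabla h\|_{L^{2,1}} \le C\|\nabla\alpha\|_{L^2}\|\nabla g\|_{L^2}$. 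Two integrations by parts, each with vanishing boundary contribution because $f \in W_0^{1,2}(D)$, then convert the target integral: first $\int_D \alpha\,\nabla f \cdot \nabla^\perp g\,dx = -\int_D f\,\nabla\alpha\cdot\nabla^\perp g\,dx$ by antisymmetry of the Jacobian, and then, using $\nabla\alpha\cdot\nabla^\perp g = \Delta h$, $-\int_D f\,\Delta h\,dx = \int_D \nabla f \cdot \nabla h\,dx$. Finally, the $L^{2,\infty}$--$L^{2,1}$ duality recorded after the definition of the Lorentz spaces gives
\[
  \left| \int_D \nabla f \cdot \nabla h \,dx\right| \le C\|\nabla f\|_{L^{2,\infty}}\|\nabla h\|_{L^{2,1}} \le C\|\nabla f\|_{L^{2,\infty}}\|\nabla g\|_{L^2}\|\nabla\alpha\|_{L^2}.
\]
Taking the supremum over $\vec X$, so that $\|\nabla\alpha\|_{L^2}\le 1$, yields the claimed estimate.

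The conceptual point --- and the reason one obtains $\|\nabla f\|_{L^{2,\infty}}$ rather than the cruder $\|\nabla f\|_{L^2}$ that a direct $\mathcal{H}^1$--$\mathrm{BMO}$ pairing of $\alpha$ against the Jacobian $\nabla f\cdot\nabla^\perp g$ would give --- is precisely this routing: the compensation must be spent on the two $L^2$-gradient factors $(\alpha,g)$, leaving $\nabla f$ free to be tested against the resulting $L^{2,1}$ field. The main technical obstacle I anticipate is justifying the chain of integrations by parts at the borderline integrability, where $\alpha \in \bigcap_{q<\infty}L^q$ but the product $\alpha\,\nabla f\cdot\nabla^\perp g$ is only borderline integrable. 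I would resolve this either by first proving the inequality for smooth $f,g$ and passing to the limit --- mollifying the zero-extensions of $f,g$, which does not increase $\|\nabla f\|_{L^{2,\infty}}$ by Young's inequality for Lorentz spaces --- or by interpreting each Jacobian pairing through $\mathcal{H}^1$--$\mathrm{BMO}$ duality, the Jacobians lying in $\mathcal{H}^1$ by \cref{lem:Wente-L21} and $\alpha,f \in W^{1,2}(\mathbb{R}^2) \subset \mathrm{BMO}$, which renders every intermediate expression well-defined.
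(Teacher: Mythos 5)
The paper does not prove this lemma at all---it is quoted directly from the cited sources (H\'elein, Thm.~3.4.5, originally due to Bethuel, and \cite{LiLiuZhuZhu2021Energy}*{Lem.~2.4})---and your argument is precisely the standard proof given there: dualize \( \|\nabla u\|_{L^2} \), Hodge-decompose the test field so that only the gradient part \( \nabla\alpha \) survives, spend the Wente compensation of \cref{lem:Wente-L21} on the two genuinely \( L^2 \) gradients \( (\alpha, g) \) to produce an auxiliary field with \( \|\nabla h\|_{L^{2,1}} \leq C\|\nabla\alpha\|_{L^2}\|\nabla g\|_{L^2} \), and only then pair \( \nabla f \) against it via \( L^{2,\infty} \)--\( L^{2,1} \) duality. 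Your proof is correct, including the sign/boundary bookkeeping (both \( \alpha \) and \( f \) lie in \( W_0^{1,2} \), so each integration by parts is legitimate), and the borderline-integrability point you flag is real but is resolved exactly as you propose, by smooth approximation or by reading the Jacobian pairings through \( \mathcal{H}^1 \)--\( \mathrm{BMO} \) duality.
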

Finally, the Lorentz space \( L^{2,1} \) embeds continuously into \( C^0 \). 
\begin{lem}[\citelist{\cite{Helein2002Harmonic}*{Thm.~3.3.4}\cite{LiLiuZhuZhu2021Energy}*{Lem.~2.5}}]\label{lem:embedding-L21-C0}
  Suppose \( u\in W^{1,2}(\mathbb{R}^2) \) vanishes at infinity and \( \nabla u \in L^{2,1}(\mathbb{R}^2) \). Then \( u \in C^0(\mathbb{R}^2) \), and moreover,
  \[
    \|u\|_{C^0} \leq C \|\nabla u\|_{L^{2,1}}.
  \]
\end{lem}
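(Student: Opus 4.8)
The plan is to recover \( u \) from its gradient by convolution with the gradient of the fundamental solution of \( -\Delta \) on \( \mathbb{R}^2 \), and then to extract both the \( C^0 \) bound and the continuity from the single fact that the convolution kernel lies in \( L^{2,\infty} \), which is exactly the dual of \( L^{2,1} \) by the Lorentz-space duality recorded in the preliminaries. First I would establish the representation formula: for \( u \in C_c^\infty(\mathbb{R}^2) \), integrating by parts once against the fundamental solution \( E(z) = -\tfrac{1}{2\pi}\log|z| \) (so that \( -\Delta E = \delta_0 \)) gives
\[
  u(x) = \frac{1}{2\pi}\int_{\mathbb{R}^2}\frac{(x-y)\cdot\nabla u(y)}{|x-y|^2}\,dy .
\]
Since \( \nabla u \in L^{2,1}(\mathbb{R}^2)\subset L^2(\mathbb{R}^2) \) and \( u \) vanishes at infinity, I would extend this identity to the general \( u \) of the hypothesis by approximating \( \nabla u \) in \( L^{2,1} \) by smooth compactly supported fields and passing to the limit using the uniform bound proved next.

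The main estimate rests on the observation that the kernel \( y \mapsto |x-y|^{-1} \) lies in \( L^{2,\infty}(\mathbb{R}^2) \) with a norm independent of \( x \); indeed, a direct computation of its distribution function yields \( \big\||x-\cdot|^{-1}\big\|_{L^{2,\infty}} = \sqrt{\pi} \) by translation invariance. Combining this with the duality \( (L^{2,1})^* = L^{2,\infty} \) from the preliminaries (the case \( p=2 \), \( q=1 \)), I obtain for every \( x \)
\[
  |u(x)| \le \frac{1}{2\pi}\int_{\mathbb{R}^2}\frac{|\nabla u(y)|}{|x-y|}\,dy \le C\,\big\||x-\cdot|^{-1}\big\|_{L^{2,\infty}}\,\|\nabla u\|_{L^{2,1}} = C\,\|\nabla u\|_{L^{2,1}},
\]
which is the claimed bound uniformly in \( x \).

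Finally I would upgrade this to continuity. Writing \( u \) as the convolution \( u(x) = \tfrac{1}{2\pi}\int K(z)\cdot\nabla u(x-z)\,dz \) with \( K(z)=z/|z|^2 \) and applying the same duality pairing to the translate difference yields
\[
  |u(x+h)-u(x)| \le C\,\|K\|_{L^{2,\infty}}\,\big\|\nabla u(\cdot+h)-\nabla u\big\|_{L^{2,1}} .
\]
Because \( q=1<\infty \), translation acts continuously on \( L^{2,1} \) (continuous compactly supported fields are dense and translations are isometries, so a three-\( \varepsilon \) argument applies), whence the right-hand side tends to \( 0 \) as \( h\to 0 \) uniformly in \( x \); thus \( u \) is uniformly continuous. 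The step demanding the most care is the justification of the representation formula when \( u \) merely vanishes at infinity rather than having compact support: one must verify that the boundary contributions in the integration by parts disappear and that the \( L^{2,1} \) approximation can be carried out consistently. Granting the Lorentz-space duality and translation continuity already available, the remaining estimates are routine.
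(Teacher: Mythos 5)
The paper does not prove this lemma: it is quoted verbatim from the cited references (H\'elein, Thm.~3.3.4, and Li--Liu--Zhu--Zhu, Lem.~2.5), so there is no in-paper argument to compare against. Your proof is essentially the standard one found in those references: represent \( u \) through the kernel \( K(z)=z/(2\pi|z|^2) \), observe \( K\in L^{2,\infty}(\mathbb{R}^2) \) with norm independent of the base point, apply the \( L^{2,1} \)--\( L^{2,\infty} \) duality (O'Neil/H\"older for Lorentz spaces) for the uniform bound, and use continuity of translations on \( L^{2,1} \) (valid since the secondary exponent \( q=1 \) is finite and \( C_c \) is dense) for the modulus of continuity. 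All of these steps are sound, and the constants you compute for \( \||x-\cdot|^{-1}\|_{L^{2,\infty}} \) are correct under the paper's normalization.

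One caveat on the step you yourself flag as delicate: extending the representation formula by ``approximating \( \nabla u \) in \( L^{2,1} \) by smooth compactly supported fields'' is not quite the right move, because an arbitrary approximating field is not a gradient, so the identity you want to pass to the limit in is not available for the approximants. Two standard repairs: either approximate \( u \) itself by \( \rho_\epsilon * (\chi_R u) \) (the error term \( u\nabla\chi_R \) tends to \( 0 \) in \( L^{2,1} \) because \( \|\nabla\chi_R\|_{L^{2,1}} \) is scale-invariant in two dimensions and \( u \) vanishes at infinity), or define \( v(x)=\int K(x-y)\cdot\nabla u(y)\,dy \) directly, note that \( u-v \) is a tempered harmonic distribution, hence a polynomial, and rule out nonzero polynomials using that \( v \) is bounded and tends to \( 0 \) at infinity while \( u \) vanishes at infinity. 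With either repair your argument is complete.
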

Here and in the sequel, we omit the domain in various norms when it is \( \mathbb{R}^2 \). The symbol \( C \) denotes a positive constant, which may depend on \( \Lambda \), \( K \), \( \epsilon_0 \), and \( \delta_0 \), and may vary from one inequality to another.
\section{The Euler--Lagrange equations, the conservation law and the Pohozaev-type equality}\label{sec:EL-ConservationLaw-Pohozaev}
In this section, we first explicitly compute the Euler--Lagrange equation \eqref{eq:alpha-YMH-local} in the case where the fiber \( F \) is the sphere \( S^{K-1} \subset \mathbb{R}^K \). We then derive a new conservation law for \( \alpha \)-YMH fields, and separately establish a Pohozaev-type identity for \( \alpha \)-YMH fields. These preparations are essential for establishing the compactness of sequences of \( \alpha \)-YMH fields with uniformly bounded energy.
\subsection{The Euler--Lagrange equation for \texorpdfstring{\( \alpha \)}{α}-YMH fields}
Suppose \( \mathcal{P} \) is a \( G \)-principal bundle over a closed Riemannian manifold \( \Sigma \), and let \( \mathcal{F} = \mathcal{P} \times_G F \) denote the associated fiber bundle, where \( F \) is a Riemannian manifold equipped with an isometric \( G \)-action. Let \( A \) be a connection on \( \mathcal{P} \), and \( u \) a local section of \( \mathcal{F} \) over some open set \( U \subset \Sigma \). The induced covariant derivative is denoted by \( \nabla_A \), which locally takes the form \( \nabla_A = d + a \) for some \( \mathfrak{g} \)-valued \( 1 \)-form \( a \). Explicitly,
\[
  \nabla_A u = d u + a \cdot u,
\]
where ``\( \cdot \)'' denotes the infinitesimal action of \( \mathfrak{g} \) on \( F \). More precisely, for \( a = a_\beta dx^\beta \in \Omega^1(\mathfrak{g}) = \Gamma(T^*\Sigma \otimes \mathfrak{g}) \), let \( \varphi_s = \exp(sa_\beta) \) be the one-parameter family of bundle automorphisms generated by \( a_\beta \). Then \( a_\beta \) induces a fundamental vector field \( X_{a_\beta} \in \Gamma(TF) \) at \( u \) via
\[
  X_{a_\beta}(u) = \left. \frac{d}{ds} \right|_{s=0} \varphi_s(u) =: a_\beta \cdot u.
\]
We define 
\[
  a\cdot u = X_a(u) \mathpunct{:}= a_\beta \cdot u\, dx^\beta.
\]
Similarly, for a vector field \( V \in \Gamma(TF) \),
\[
  \nabla_V X_{a_\beta} = \left. \frac{\nabla}{ds} \right|_{s=0} (\varphi_s)_* V =: a_\beta \cdot V, \qquad
  a\cdot V \mathpunct{:}= \nabla_V X_a \mathpunct{:}= \nabla_V \left( X_{a_\beta}\, dx^\beta \right).
\]
where \( \frac{\nabla}{ds} \) denotes the covariant derivative along the curve \( \varphi_s \), induced by the Levi-Civita connection on \( F \). 

Moreover, since \( G \) is a connected, compact Lie group acting isometrically on \( F \), a general equivariant embedding theorem due to Moore and Schlafly \cite{MooreSchlafly1980equivariant} guarantees the existence of an orthogonal representation \( \rho: G \to SO(K) \) and an isometric embedding \( i: F \hookrightarrow \mathbb{R}^K \) such that \( i(g.y) = \rho(g)\, i(y) \) for all \( y \in F \) and \( g \in G \). Consequently, the Lie algebra \( \mathfrak{g} \) can be regarded as a subalgebra of \( \mathfrak{so}(K) \), the space of skew-symmetric \( K \times K \) matrices; the infinitesimal action of \( a \) on \( y \in F \) is given by matrix multiplication:
\[
  a.y = X_a(y) = \rho(a) \cdot y =: \chi_a \cdot y,
\]
and the action of \( a \) on a vector field \( V \in \Gamma(TF) \) is
\[
  a.V = \nabla_V X_a = (\chi_a \cdot V)^\top = \chi_a \cdot V - \mathrm{I\!I}(y)(X_a, V),
\]
where \( (\cdot)^\top \) denotes the orthogonal projection from \( \mathbb{R}^K \) onto the tangent space \( T_y F \), and \( \mathrm{I\!I} \) is the second fundamental form of the embedding \( F \hookrightarrow \mathbb{R}^K \).

In particular, for \( F = S^{K-1} \subset \mathbb{R}^K \) and \( G \subset \mathrm{SO}(K) \), we have \( \mathfrak{g} \subset \mathfrak{so}(K) \), i.e., the space of all skew-symmetric \( K \times K \) matrices, and \( X_a(u) = a u \) by matrix multiplication (where \( \rho \) is the inclusion). Furthermore, since \( \mathrm{I\!I}(u)(X, Y) = -\langle X, Y \rangle u \), it follows that
\[
  a \cdot V = a V - \langle a u, V \rangle u.
\]

\begin{prop}\label{prop:EL-local}
  Suppose \( (A, \phi) \in \mathscr{A} \times \mathscr{S} \) is an \( \alpha \)-YMH field. Under a local trivialization, \( (A, \phi) \) is represented by \( A = d + a \) and \( \phi = u \). If the fiber \( F = S^{K-1} \subset \mathbb{R}^K \) and \( G \subset \mathrm{SO}(K) \), then the Euler--Lagrange equation \eqref{eq:alpha-YMH} reduces to \eqref{eq:alpha-YMH-local}.
\end{prop}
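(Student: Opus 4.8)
The plan is to treat the two Euler--Lagrange equations in \eqref{eq:alpha-YMH} separately, substituting the local trivialization $A=d+a$, $\phi=u$ together with the sphere-specific identities recorded above: the infinitesimal action $X_a(u)=au$ by matrix multiplication, the tangential action $a\cdot V$ on vector fields, and the second fundamental form relation $\mathrm{I\!I}(u)(X,Y)=-\langle X,Y\rangle u$. Throughout I would work on the flat disc $D\subset\mathbb{R}^2$ so that the domain covariant derivatives reduce to ordinary partials, and I would keep $f_\alpha=\alpha(1+\lvert\nabla_A u\rvert^2)^{\alpha-1}$ as an abbreviation.

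For the Higgs-field equation I would start from $\nabla_A^*(f_\alpha\nabla_A u)+\mu(u)\nabla\mu(u)=0$, which is valued in the tangent bundle $u^*TS^{K-1}$. Writing $\nabla_{A,\beta}u=\partial_\beta u+a_\beta u$ and expanding the adjoint covariant derivative as $\nabla_A^*(f_\alpha\nabla_A u)=-\sum_\beta\nabla_{A,\partial_\beta}(f_\alpha\nabla_{A,\beta}u)$, each summand splits, under the embedding into $\mathbb{R}^K$, into a plain Euclidean divergence, a connection contribution from $a_\beta(\cdot)$, and a normal correction dictated by $\mathrm{I\!I}$. The decisive point is that the combined normal component, after using $\langle\nabla_{A,\beta}u,u\rangle=0$ (which follows from $\lvert u\rvert^2=1$ and the skew-symmetry of $a_\beta\in\mathfrak{so}(K)$), collapses exactly to $-f_\alpha\lvert\nabla_A u\rvert^2 u$. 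Rearranging the resulting $\mathbb{R}^K$-valued identity then yields the first line of \eqref{eq:alpha-YMH-local} with $\Phi_\alpha(A,u)=f_\alpha(a\cdot du+a\cdot a\cdot u)-\mu(u)\nabla\mu(u)$, where $a\cdot du$ and $a\cdot a\cdot u$ denote $a_\beta\partial_\beta u$ and $a_\beta a_\beta u$ respectively.

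For the connection equation I would begin from $D_A^*F_A+f_\alpha\, u^*\nabla_A u=0$, substitute $F_A=da+a\wedge a$, and expand the gauge-covariant codifferential $D_A^*=d^*+(\text{commutator terms in }a)$. Imposing the Coulomb gauge $d^*a=0$ from \cite{Uhlenbeck1982Connections} converts the principal part $d^*da$ into $\Delta a$ through the Hodge identity $d^*d+dd^*=-\Delta$ on flat $\mathbb{R}^2$. Every remaining contribution---the divergence $d^*(a\wedge a)$ of the quadratic curvature term and the bracket terms produced by $D_A^*$---is of lower order in $a$; collecting these together with the source $-f_\alpha u^*\nabla_A u$ reproduces exactly $\Psi_\alpha(A,u)=-f_\alpha u^*\nabla_A u+\langle da,a\rangle+\langle a,[a,a]\rangle-[F_A\lh a^\sharp]$, giving the second line of \eqref{eq:alpha-YMH-local}.

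\textbf{The main obstacle} I anticipate is the bookkeeping in the connection equation: one must carefully track the algebraic bracket terms generated by expanding $D_A^*(da+a\wedge a)$ and match them, up to sign, with the compact expressions $\langle da,a\rangle$, $\langle a,[a,a]\rangle$, and $-[F_A\lh a^\sharp]$ in $\Psi_\alpha$, while keeping the wedge/commutator conventions and the two Laplacian sign conventions mutually consistent---this is where sign errors are easiest to commit. In the Higgs equation the analogous subtlety is purely the tangential/normal decomposition, and it is controlled cleanly once $\langle\nabla_{A,\beta}u,u\rangle=0$ is in hand.
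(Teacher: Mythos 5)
Your proposal is correct, and it reaches \eqref{eq:alpha-YMH-local} by a route that genuinely differs from the paper's on the Higgs equation. The paper does not start from the abstract equation \eqref{eq:alpha-YMH} at all: it re-derives the local Euler--Lagrange equation from scratch by a first-variation computation, taking the normalized variation \( u_t = (u+tw)/\lvert u+tw\rvert \), expanding \( \lvert \nabla_A u_t\rvert^2 \) in coordinates, integrating by parts, and exploiting skew-symmetry of \( a_\beta \) to regroup terms (this occupies most of the paper's proof), before passing from \( \alpha=1 \) to general \( \alpha \) via the identity relating \( \nabla_A^*(f_\alpha\nabla_A u) \) and \( \div(f_\alpha\nabla_A u) \). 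You instead take \eqref{eq:alpha-YMH} as given and unpack \( \nabla_A^* \) through the embedding: writing \( D_\beta = \partial_\beta + a_\beta \), the induced connection on vector fields along \( u \) is \( \nabla_{A,\beta}V = D_\beta V + \langle V,\nabla_{A,\beta}u\rangle u \), so that summing over \( \beta \) with \( V = f_\alpha\nabla_{A,\beta}u \) produces precisely \( \div(f_\alpha\nabla_A u) + f_\alpha(a\cdot du + a\cdot a\cdot u) \) plus the normal term \( f_\alpha\lvert\nabla_A u\rvert^2 u \); your use of \( \langle\nabla_{A,\beta}u,u\rangle=0 \) is exactly what makes this collapse work, and your reading of \( a\cdot du \), \( a\cdot a\cdot u \) as plain matrix products matches what the expansion (and the paper's final equation) requires. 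Your approach is shorter and is more literally a ``reduction'' of \eqref{eq:alpha-YMH}; the paper's variational computation is longer but self-contained, in that it fixes the meaning of \( \nabla_A^* \) on the vertical bundle rather than presupposing it. On the connection equation the two arguments are essentially identical (expand \( D_A^*F_A \), impose the Coulomb gauge, convert \( d^*da \) to \( -\Delta a \)), with one cosmetic difference: you invoke the flat Hodge identity \( d^*d+dd^*=-\Delta \), whereas the paper invokes a Weitzenb\"ock formula in which the term \( [F_A\lh a^\sharp] \) appears on the Laplacian side; in your grouping that same term must instead emerge from the bracket/contraction terms of \( D_A^*-d^* \), which is exactly the sign-and-convention bookkeeping you flag as the main risk --- the paper's own proof faces the identical matching problem, so this is not a gap, merely the unavoidable part of the computation.
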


\begin{proof}
Since the typical fiber \( F \) is the sphere \( S^{K-1} \subset \mathbb{R}^K \), and \( G\subset\mathrm{SO}(K) \). Locally, we may write
\[
  \nabla_A u = du + a \cdot u = \left( \partial_\alpha u^i + a_{\alpha, j}^i u^j \right) dx^\alpha \otimes \epsilon_i, \quad a_\alpha = (a_{\alpha, j}^i) \in \mathfrak{so}(K),
\]
where \( \{ \epsilon_i \}_{i=1}^K \) denotes the standard basis of \( \mathbb{R}^K \). Thus,
\begin{align*}
  \left| \nabla_A u \right|^2_h
  &= g^{\alpha\beta}(u) \left( \partial_\alpha u^i + a_{\alpha, k}^i u^k \right) \left( \partial_\beta u^i + a_{\beta, l}^i u^l \right) \\
  &= g^{\alpha\beta} \partial_\alpha u^i \partial_\beta u^i
  + 2 g^{\alpha\beta} \partial_\alpha u^i a_{\beta, l}^i u^l
  + g^{\alpha\beta} a_{\alpha, k}^i a_{\beta, l}^i u^k u^l.
\end{align*}
To simplify the presentation, we assume that the metric on \( U \subset \Sigma \) is Euclidean. The general case can be treated analogously using normal coordinates. Then,
\[
  \left| \nabla_A u \right|^2 = \partial_\alpha u^i \partial_\alpha u^i
  + 2 \partial_\alpha u^i a_{\alpha, l}^i u^l
  + a_{\alpha, k}^i a_{\alpha, l}^i u^k u^l.
\]

The Euler--Lagrange equation can be derived as follows; we refer the reader to \cite{Evans2010Partial} for the case of harmonic maps. For clarity, we first consider the case \( \alpha=1 \) and fix the connection \( A \), i.e., we consider variations only in the map \( u \). Let \( u_t = \frac{u + t w}{|u + t w|} \in \mathbb{R}^K \) be a variation through maps into the sphere. Note that \( u_0'=w-\left\langle u,w \right\rangle u \). Since \( |u|^2 = 1 \), it follows that \( \partial_\alpha u^i u^i = 0 \). Moreover, recall that \( a_{\alpha, i}^l + a_{\alpha, l}^i = 0 \) since \( a_\alpha \) is skew-symmetric. Therefore,
\begin{align*}
  &\left. \frac{d}{dt} \right|_{t=0} \mathcal{L}(A, u_t) - \int_\Sigma \langle \mu(u)\nabla \mu(u), w \rangle \\
  &\qquad = \frac{1}{2} \int_\Sigma \left. \frac{d}{dt} \right|_{t=0} |\nabla_A u_t|^2_{h(u_t)} 
  = \int_\Sigma \langle \nabla_A u_0, \nabla_A u_0' \rangle_{\mathbb{R}^K} \\
  &\qquad = \int_\Sigma \left( \partial_\alpha u^i + a_{\alpha, k}^i u^k \right) \left( \partial_\alpha (w^i - u^j w^j u^i) + a_{\alpha, l}^i (w^l - u^j w^j u^l) \right) \\
  &\qquad = \int_\Sigma \nabla u^i \nabla w^i - \partial_\alpha u^i \partial_\alpha (u^i u^j w^j) + a_{\alpha, k}^i a_{\alpha, l}^i u^k (w^l - u^j u^l w^j) \\
  &\qquad\qquad + \int_\Sigma \partial_\alpha u^i a_{\alpha, l}^i (w^l - u^j w^j u^l) + a_{\alpha, k}^i u^k \partial_\alpha (w^i - u^j w^j u^i).
\end{align*}
Upon rearranging and integrating by parts, and using skew-symmetry and orthogonality relations, we obtain
\begin{align*}
  &\left. \frac{d}{dt} \right|_{t=0} \mathcal{L}(A, u_t) - \int_\Sigma \langle \mu(u)\nabla \mu(u), w \rangle \\
  &\qquad=\int_\Sigma-\div \nabla  u^iw^i-\underbrace{\partial_\alpha u^iu^i}_{=0}\partial_\alpha (w^j u^j)-\partial_\alpha u^i\partial_\alpha u^i w^ju^j+a_{\alpha,k}^ia_{\alpha,l}^i u^kw^l- a_{\alpha,k}^iu^ka_{\alpha,l}^iu^l u^jw^j\\
  &\qquad\qquad+\int_\Sigma \partial_\alpha u^ia_{\alpha,l}^iw^l-\partial_\alpha u^ia_{\alpha,l}^i u^ju^l w^j+a_{\alpha,k}^iu^k\partial_\alpha w^i-a_{\alpha,k}^iu^k\partial_\alpha(u^iu^jw^j)
\end{align*}
These terms can be grouped and expressed as
\begin{align*}
  &\int_\Sigma -\left\langle \div \nabla u+\lvert \nabla  u \rvert^2 u,w \right\rangle
  -\int_\Sigma \left\langle a\cdot a u +\lvert au \rvert^2u+a\cdot du-\left\langle a\cdot du,u \right\rangle u, w \right\rangle\\
  &\qquad +\int_\Sigma \left\langle -a\cdot du+d^*a\cdot u,w \right\rangle+\left\langle \left\langle a\cdot du-d^*a\cdot u, u \right\rangle u,w \right\rangle,
\end{align*}
where we used \( [a u]^i = a_{\alpha, k}^i u^k dx^\alpha \). Observe that
\begin{align*}
  \int_M a_{\alpha,k}^iu^k\partial_\alpha w^i
  &=\int_M\left\langle [au]^i,d w^i \right\rangle_g
  =\int_Md^*[au]^i w^i
  =\int_M\left\langle -a\cdot du+d^*a\cdot u,w \right\rangle,
\end{align*}
since
\begin{align*}
  d^*[au]^i&=d^*\left( a_{\alpha,k}^iu^kdx^\alpha \right)
  =- \partial_\alpha(a_{\alpha,k}^iu^k)
  =-a_{\alpha,k}^i\partial_\alpha u^k- \partial_\alpha a_{\alpha,k}^iu^k\\
           &=-a_{\alpha,k}^i\partial_\alpha u^k+d^*a^i_ku^k
           =-[a\cdot du]^i+[d^*a\cdot u]^i.
\end{align*}
That is,
\[
  d^*(au)=-a\cdot du+d^*a\cdot u,
\]
which generalizes the standard formula for scalar functions \( f \):
\[
  d^*(fa)=- a\cdot df+fd^*a,\quad f\in C^\infty(\Sigma).
\]

Proceeding, we obtain
\begin{align*}
  \left. \frac{d}{dt} \right\rvert_{t=0}\mathcal{L}(A,u_t)
  &=-\int_\Sigma\left\langle \div \nabla  u+\lvert \nabla u \rvert^2u+a\cdot au+\lvert au \rvert^2 u+2a\cdot du-\left\langle a\cdot du,u \right\rangle u,w \right\rangle\\
  &\qquad +\int_\Sigma\left\langle d^*a\cdot u-\left\langle d^*a\cdot u,u \right\rangle u+\left\langle a\cdot du, u\right\rangle u+\mu \nabla \mu,w \right\rangle\\
  &=-\int_\Sigma\left\langle \div \nabla  u+\lvert \nabla_A u \rvert^2 u+a\cdot a u+2a\cdot du-d^*a\cdot u-\mu \nabla \mu,w \right\rangle,
\end{align*}
where we used
\[
  \lvert \nabla _Au \rvert^2=\lvert \nabla u \rvert^2+\lvert a u \rvert^2-2\left\langle a\cdot du,u \right\rangle,
  \quad 
  \left\langle d^*a\cdot u,u \right\rangle=-\partial_\alpha a_{\alpha,i}^ju^iu^j=0,
\]
since \( a_\alpha \) is skew-symmetric. Thus, the Euler--Lagrange equation for the map component in \( \mathcal{L}(A, u) \) is
\[
  \Delta u+\lvert \nabla _Au \rvert^2 u-d^*a\cdot u+2a\cdot du+a\cdot a u-\mu(u)\nabla \mu(u)=0.
\]
Note also that
\[
  \div(\nabla _Au^i)=\partial_\alpha \left( \partial_\alpha u^i+a_{\alpha j}^iu^j \right)
  =\Delta u^i-[d^*a\cdot u]^i+[a\cdot du]^i,
\]
so we can rewrite the Euler--Lagrange equation as
\[
  \div(\nabla _Au)+\lvert \nabla _Au \rvert^2 u+a\cdot du+a\cdot a u-\mu(u)\nabla \mu(u)=0.
\]

For the \( \alpha \)-YMH fields, the Euler--Lagrange equation for the map component is derived similarly. Note that from the previous calculation,
\[
  -\nabla _A^*\nabla _Au=\div(\nabla _Au)+\lvert \nabla _Au \rvert^2 u+a \cdot du+a\cdot a u,
\]
so by \eqref{eq:alpha-YMH},
\[
  0=\nabla _A^*\left( f_\alpha \nabla _Au \right)+\mu(u)\nabla \mu(u)=-\left\langle df_\alpha,\nabla _Au \right\rangle+f_\alpha\nabla _A^*\nabla _Au+\mu(u)\nabla \mu(u),
\]
where \( f_\alpha=\alpha\left( 1+\lvert \nabla_Au \rvert^2\right)^{\alpha-1} \). Therefore,
\begin{align*}
  \div(f_\alpha \nabla _Au)&=\left\langle df_\alpha,\nabla _Au \right\rangle+f_\alpha\div(\nabla _Au)
  =f_\alpha\left( \div(\nabla _Au)+\nabla _A^*\nabla _Au \right)+\mu(u)\nabla \mu(u)\\
                           &=-f_\alpha\left( \lvert \nabla _Au \rvert^2u+a\cdot du+a\cdot a u \right)+\mu(u)\nabla \mu(u).
\end{align*}
That is
\[
  \div(f_\alpha\nabla _Au)+f_\alpha\left( \lvert \nabla _Au \rvert^2u+a\cdot du+a\cdot a u \right)-\mu(u)\nabla \mu(u)=0,
\]
which is exactly the first equation in \eqref{eq:alpha-YMH-local}.

Finally, for the connection component, recall that locally \( \nabla_A = d + a \) and
\[
  F_A=da+a\wedge a,
\]
with \( D_A F_A = 0 \) by the second Bianchi identity. A direct calculation yields
\[
  D_A^*F_A=d^*da-[a,d^*a]-\langle da,a \rangle-\langle a,[a,a] \rangle.
\]
Therefore, the equation for the connection \( A \) is
\[
  d^*da-[a,d^*a]-\langle da,a \rangle-\langle a,[a,a] \rangle+f_\alpha u^*\nabla _Au=0.
\]
In particular, in a local Coulomb gauge, i.e., when \( d^* a = 0 \), the local equations for the \( \alpha \)-YMH fields reduce to the second equation of \eqref{eq:alpha-YMH-local}, where we apply the Weitzenb\"ock formula:
\[
  \Delta_H a=d^*da=-\Delta a+a\lh \mathrm{Ric}_\Sigma+[F_A\lh a^\sharp ],\quad \Delta=-\nabla ^*\nabla.
\]
\end{proof}

\subsection{The conservation law}
We now derive a conservation law for \( \alpha \)-YMH fields in the setting of a spherical bundle. Since \( G \subset \mathrm{SO}(K) \) represents a global symmetry of the \( \alpha \)-YMH theory—depending only on the intrinsic geometry of the sphere—the vector field \( X_b \) generated by \( b\in \mathfrak{so}(K) \) is an infinitesimal symmetry of the Lagrangian \( L_\alpha \). More precisely, for \( g \in G \subset \mathrm{SO}(K) \), the symmetry action on a field configuration \( (a, u) \) is given by
\[
  g \cdot (a, u) = \big( gag^{-1} + g d g^{-1},\, gu \big).
\]
Under this action, it is straightforward to verify that the Lagrangian density
\[
  L_\alpha(x, (a, u), d(a, u)) = \sum_{i, \alpha} \left( 1 + |du + a u|^2 \right)^\alpha - 1 + |da + a \wedge a|^2 + |\mu(u)|^2
\]
is invariant. Consequently, Noether's theorem yields the following abstract form of the conservation law for \( \alpha \)-YMH fields. 
\begin{prop}\label{prop:abstract-conservation-law}
  Suppose \( (A,u) \) is a critical point of the local \( \alpha \)-YMH functional over \( B^m \)
  \[
    \mathcal{L}_\alpha(A,u;B^m)=\int_{B^m}L_\alpha(x,(a,u),d(a,u)),
  \]
  where \( A=d+a \) and \( u \mathpunct{:}B^m\to S^{K-1} \). Then, for any \( b\in \mathfrak{so}(K) \), we have 
\begin{equation}\label{eq:conservation-law}
  \div\left[ \left( \frac{\partial L_\alpha}{\partial P_{\gamma\beta,j}^i}[b, a_\gamma]_j^i
  +\frac{\partial L_\alpha}{\partial Q_\beta^i} b_{j}^iu^j \right)\partial_{x^\beta} \right]=0,
\end{equation}
where the Lagrangian \( L_\alpha(x,(a,u),d(a,u)) \) is locally expressed as
\[
  \begin{multlined}
    L_\alpha(x,(y,z),(P,Q))\\
    =\sum_{i,\alpha}\left(  1+ \left( Q_\alpha^i \right)^2+2Q_\alpha^i y_{\alpha,j}^iz^j+y_{\alpha,j}^iz^j y_{\alpha,k}^iz^k \right)^\alpha-1+\lvert P+y\wedge y \rvert^2+\lvert \mu(z) \rvert^2.
  \end{multlined}
\]
\end{prop}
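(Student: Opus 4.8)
The plan is to read \eqref{eq:conservation-law} off Noether's theorem applied to the internal (target) symmetry generated by \( \mathfrak{so}(K) \). First I would compute the infinitesimal generator of the action \( g\cdot(a,u)=(gag^{-1}+g\,dg^{-1},gu) \) along \( g=\exp(sb) \), \( b\in\mathfrak{so}(K) \). Since \( b \) is a constant element of \( \mathfrak{so}(K) \), the transformation is global (\( x \)-independent), so the inhomogeneous piece \( g\,dg^{-1} \) vanishes identically, leaving
\[
  \delta a_\gamma=\left.\tfrac{d}{ds}\right|_{s=0}\!\big(e^{sb}a_\gamma e^{-sb}\big)=[b,a_\gamma],\qquad
  \delta u=\left.\tfrac{d}{ds}\right|_{s=0}\!\big(e^{sb}u\big)=bu.
\]
The structural point that makes this combined action (rather than a bare rotation of \( u \)) the correct symmetry is the covariance \( \nabla_{gag^{-1}}(gu)=g\,\nabla_A u \), immediate from \( d(gu)+gag^{-1}\!\cdot gu=g(du+au) \) for constant \( g \); infinitesimally \( \delta(\nabla_A u)=b\,\nabla_A u \), whence \( |\nabla_A u|^2 \) is unchanged because \( b \) is skew-symmetric.

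Next I would invoke the on-shell identity that, for any variation, the first variation of \( L_\alpha \) is a total divergence once the Euler--Lagrange equations are used. For the connection component this is routine: the (unconstrained) connection equation turns \( \tfrac{\partial L_\alpha}{\partial a_\gamma}\delta a_\gamma+\tfrac{\partial L_\alpha}{\partial(\partial_\beta a_\gamma)}\partial_\beta(\delta a_\gamma) \) into \( \partial_\beta\!\big(\tfrac{\partial L_\alpha}{\partial(\partial_\beta a_\gamma)}\delta a_\gamma\big) \). For the map component I must respect the constraint \( u\in S^{K-1} \): the constrained Euler--Lagrange equation only asserts that the map-EL operator is \emph{normal}, i.e.\ a multiple of \( u \). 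Here the decisive observation is that \( \delta u=bu \) is \emph{tangent} to the sphere, since \( \langle bu,u\rangle=0 \) by skew-symmetry of \( b \); thus the Lagrange-multiplier term drops when paired with \( \delta u \), and the map terms likewise collapse to \( \partial_\beta\!\big(\tfrac{\partial L_\alpha}{\partial(\partial_\beta u^i)}\delta u^i\big) \). Adding the two contributions, the on-shell first variation equals \( \partial_\beta J^\beta \) with
\[
  J^\beta=\frac{\partial L_\alpha}{\partial(\partial_\beta a_{\gamma,j}^i)}[b,a_\gamma]_j^i+\frac{\partial L_\alpha}{\partial(\partial_\beta u^i)}b_j^i u^j,
\]
which in the \( (P,Q) \)-notation is precisely the vector field inside the divergence in \eqref{eq:conservation-law}.

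It then remains to certify \( \delta L_\alpha=0 \), so that \( \partial_\beta J^\beta=0 \). The Yang--Mills term is invariant because \( F_A\mapsto gF_Ag^{-1} \) and the Frobenius norm is \( \mathrm{Ad} \)-invariant; the Higgs kinetic term \( (1+|\nabla_A u|^2)^\alpha \) is invariant by the covariance above; and the potential \( |\mu(u)|^2 \) contributes \( 2\mu(u)\langle\nabla\mu(u),bu\rangle \) to \( \delta L_\alpha \), which vanishes by the gauge-invariance of \( \mu \) (equivalently, the tangential gradient \( \nabla\mu(u) \) is annihilated by the infinitesimal rotation \( b \)). I expect the main difficulty to be bookkeeping rather than conceptual, on two fronts: (i) tracking the antisymmetry of the Yang--Mills momentum \( \partial L_\alpha/\partial P \) in its two form-indices, which is exactly what reconciles the placement of the free divergence index \( \beta \) in \eqref{eq:conservation-law} with the derivative index that appears naturally in \( J^\beta \); and (ii) making the constrained variational calculus precise, so that the normal multiplier term is seen to be killed by the tangency \( \langle bu,u\rangle=0 \). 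With these two points settled, invariance of \( L_\alpha \) together with the on-shell divergence identity delivers \eqref{eq:conservation-law}.
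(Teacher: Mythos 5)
Your proposal is correct, and it shares the paper's starting point: the global action \( g\cdot(a,u)=(gag^{-1}+g\,dg^{-1},gu) \), the generator \( X_b=([b,a],bu) \) for constant \( b\in\mathfrak{so}(K) \) (so that \( g\,dg^{-1} \) drops), and the three invariance checks \( \nabla_{g\cdot A}(g\cdot u)=g\nabla_A u \), \( F_{g\cdot A}=gF_Ag^{-1} \), \( \mu(g\cdot u)=\mu(u) \). Where you genuinely diverge is the mechanism converting invariance plus criticality into the divergence identity. The paper follows H\'elein's test-function scheme: it inserts the localized variation \( s\varphi X_b(a,u) \), \( \varphi\in C_0^\infty(B^m) \), into the functional, invokes criticality so the full first variation vanishes, and uses invariance to cancel every term in which \( \varphi \) is undifferentiated; what survives is \( \int_{B^m} J^\beta\,\partial_\beta\varphi=0 \) for all \( \varphi \), i.e.\ \eqref{eq:conservation-law} in the weak sense, with the Euler--Lagrange equations never written down and the sphere constraint silently absorbed into the \( o(s) \) normalization keeping \( u+s\varphi\,bu \) on \( S^{K-1} \). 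You instead run the classical pointwise Noether identity \( \delta L_\alpha=(\mathrm{EL}_a)\cdot[b,a_\gamma]+(\mathrm{EL}_u)\cdot bu+\partial_\beta J^\beta \) and discharge the EL terms on shell: the connection equation pairs to zero against \( [b,a_\gamma] \), and the constrained map equation contributes only its normal (Lagrange multiplier) part \( \lambda u \), killed by the tangency \( \langle bu,u\rangle=0 \). Both routes hinge on exactly that tangency fact; the paper leaves it implicit, you make it the explicit pivot. Your version buys a pointwise identity and a clean conceptual account of why the constraint is harmless, at the cost of having to formulate the constrained EL equations precisely --- legitimate here since critical points are smooth, but note that the connection equation must be understood within \( \mathfrak{so}(K) \)-valued 1-forms, against which \( [b,a_\gamma] \) is indeed an admissible (skew-symmetric) direction. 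The paper's version avoids the multiplier entirely and delivers the conservation law directly in the distributional form that is what actually gets used in the later Hodge-decomposition arguments.
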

For the reader's convenience, we briefly sketch the derivation of \eqref{eq:conservation-law}, following the presentation of Noether's theorem in \cite{Helein2002Harmonic}*{Sect.~1.3.1}.
\begin{proof}
The infinitesimal generator \( X_b \) of the symmetry is given by
\[
  X_{b}(y,z)=\left. \frac{d}{ds} \right\rvert_{s=0}\exp(s b)\cdot (y,z)= \left(-\left( db+[y,b] \right), bz\right).
\]
For constant \( b \in \mathfrak{so}(K) \), this becomes \( X_b(a,u) = ([b,a],\, b u) \). To check that \( X_b \) is an infinitesimal symmetry of \( L_\alpha(x,(a,u),d(a,u)) := (1+|\nabla_A u|^2)^\alpha - 1 + |F_A|^2 + |\mu(u)|^2 \) at a critical point \( (A,u) \), we note:
\begin{itemize}
  \item \(  \nabla _{g\cdot A} (g\cdot u) = d(gu)+a_g (gu) = dg u+gdu+gdg^{-1}gu+gag^{-1}gu = g(du+au)=g\nabla _Au \);
  \item \( F_{g\cdot A}=da_g+a_g\wedge a_g=gdag^{-1}+ga\wedge  ag^{-1}=gF_Ag^{-1} \);
  \item \( \mu(g\cdot u)=\mu(u) \).
\end{itemize}

Let \( \varphi \in C_0^\infty(B^m, \mathbb{R}) \) be a test function. Since \( (A,u) \) is a critical point of \( \mathcal{L}_\alpha \), we have
\begin{equation}\label{eq:critical-point}
  \mathcal{L_\alpha}\left( (A,u)+s\varphi X_b(a,u)+o(s) \right)=\mathcal{L_\alpha}(A,u)+o(s).
\end{equation}
A Taylor expansion yields
\begin{align*}
  &\mathcal{L}_\alpha((A,u)+s\varphi X_b(a,u)+o(s))\\
  &\qquad=\int_{B^m} L\left(x,(a,u)+s\varphi X_b(a,u),d\left( a,u \right)+s d\left(\varphi X_b(a,u)\right)\right)+o(s)\\
  &\qquad=\int_{B^m} L\left(x,(a,u)+s\varphi X_b(a,u),d\left( a,u \right)+s \varphi d X_b(a,u)+sd\varphi X_b(a,u) \right)+o(s)\\
  &\qquad=\int_{B^m} L\left(x,(a,u)+s\varphi X_b(a,u),d(a,u)+s\varphi dX_b(a,u)\right)+o(s)\\
  &\qquad\qquad +\int_{B^m} s\left\{ \frac{\partial L}{\partial P}[b,a]+\frac{\partial L}{\partial Q} bu \right\} d\varphi.
\end{align*}
Since the Lagrangian is invariant under the symmetry, 
\[
  L\left(x,\exp(t b)\cdot(a,u),d\left( \exp(t b)\cdot(a,u) \right)\right)=L(x,(a,u),d(a,u)),
\]
and so
\[
  L(x,(a,u)+tX_b(a,u),d(a,u)+tdX_b(a,u))=L(x,(a,u),d(a,u))+o(t),
\]
which implies
\[
  L\left(x,(a,u)+s\varphi X_b,d(a,u)+s\varphi dX_b\right)=L(x,(a,u),d(a,u))+o(s).
\]
Then, we obtain
\begin{align*}
  &\mathcal{L}_\alpha((A,u)+s\varphi X_b(a,u)+o(s))\\
  &\qquad=\int_{B^m} L\left( x,(a,u),d(a,u) \right)+o(s)
  +\int_{B^m} s\left\{ \frac{\partial L}{\partial P}[b,a]+\frac{\partial L}{\partial Q} bu \right\} d\varphi\\
  &\qquad=\mathcal{L}_\alpha(A,u)+o(s)
    +\int_{B^m} s\left\{ \frac{\partial L}{\partial P}[b,a]+\frac{\partial L}{\partial Q} bu \right\} d\varphi.
\end{align*}
By \eqref{eq:critical-point}, this yields
\[
  \int_{B^m}\left( \frac{\partial L}{\partial P_{\gamma\beta,j}^i}[b,a_\gamma]^i_j+\frac{\partial L}{\partial Q_\beta^i}b_j^iu^j\right)\frac{\partial\varphi}{\partial x^\beta}=0,
\]
which leads to \eqref{eq:conservation-law}, i.e.,
\[
  \div\left[ \left( \frac{\partial L}{\partial P_{\gamma\beta,j}^i}[b,a_\gamma]^i_j+\frac{\partial L}{\partial Q_\beta^i}b_j^iu^j\right)\partial_{x^\beta} 
\right]=0.
\]
Or more concisely,
\[
  \div\left( \frac{\partial L(x,(y,z),(P,Q))}{\partial (P,Q)}\cdot X_b(a,u) \right)=0.
\]
\end{proof}
We now explicitly expand the abstract conservation law stated in \cref{prop:abstract-conservation-law}, thereby obtaining the concrete form needed in this paper. This will complete the proof of \cref{thm:conservation-law}.
\begin{proof}[Proof of \cref{thm:conservation-law}]
Recall that \( a = a_\alpha dx^\alpha \) with \( a_\alpha \in \mathfrak{so}(K) \). The curvature \( 2 \)-form is given by
  \[
    F_A=\frac{1}{2}F_{\alpha\beta}dx^\alpha\wedge dx^\beta,
  \]
  where \( F_{\alpha\beta}=-F_{\beta\alpha}=\partial_\alpha a_\beta-\partial_\beta a_\alpha+[a_\alpha,a_\beta] \). The squared norm of \( F_A \) is then
  \[
    \lvert F_A \rvert^2=\frac{1}{2}F_{\alpha\beta,j}^iF_{\beta\alpha,i}^j.
  \]

  Using the identity
  \[
    F_{\alpha\beta}=P_{\alpha\beta}-P_{\beta\alpha}+[y_\alpha,y_\beta],
  \]
  we compute the relevant variational derivatives:
  \[
    \frac{\partial F_{\eta\xi,k}^l}{\partial P_{\gamma\beta,j}^i}
    =\delta_{\eta\gamma}\delta_{\xi\beta}\delta_{kj}\delta_{il}-\delta_{\xi\gamma}\delta_{\eta\beta}\delta_{kj}\delta_{il}
    =\left( \delta_{\eta\gamma}\delta_{\xi\beta}-\delta_{\xi\gamma}\delta_{\eta\beta} \right)\delta_{kj}\delta_{il}.
  \]
  Therefore,
  \begin{align*}
    -\frac{\partial L_\alpha}{\partial P_{\gamma\beta,j}^i}  [a_\gamma,b]_j^i 
  &=-\frac{1}{2}\left( \frac{\partial F_{\eta\xi,k}^l}{\partial P_{\gamma\beta,j}^i}F_{\xi\eta,l}^k+
  F_{\eta\xi,k}^l \frac{\partial F_{\xi\eta,l}^k}{\partial P_{\gamma\beta,j}^i}\right) [a_\gamma,b] _j^i\\
  &=\frac{1}{2}\left( F_{\beta\gamma,i}^j-F_{\gamma\beta,i}^j+F_{\beta\gamma,i}^j-F_{\gamma\beta,i}^j \right)\left[ b,a_\gamma \right]_j^i\\
  &=\left( F_{\beta\gamma,i}^j-F_{\gamma\beta,i}^j \right)\left[ b,a_\gamma \right]_j^i
  =2F_{\beta\gamma,i}^j[b,a_\gamma]_j^i.
  \end{align*}

  Similarly, we have
  \begin{align*}
    \frac{\partial L_\alpha}{\partial Q_\beta^i}b_j^iu^j
  &=f_\alpha \frac{\partial \left( 
      Q_\gamma^kQ_\gamma^k+2Q_\gamma^k y_{\gamma,l}^kz^l+y_{\gamma,l}^kz^ly_{\gamma,k}^kz^k
  \right)}{\partial Q_\beta^i}b_j^iu^j\\
  &=f_\alpha \left( 2Q_\gamma^k\left( \delta_{\beta\gamma}\delta_{ki} \right)+2\delta_{\beta\gamma}\delta_{ik}y_{\gamma,l}^kz^l \right) b_j^iu^j\\
  &=2f_\alpha \left( Q_\beta^i+y_{\beta,l}^iz^l \right)b_j^iu^j
  =2f_\alpha\left( \partial_\beta u^i+a_{\beta,l}^iu^l \right)b_j^iu^j,
  \end{align*}
  where \( f_\alpha=\alpha(1+\lvert \nabla _Au \rvert^2)^{\alpha-1} \).

  Putting these calculations together, the conservation law takes the form
  \[
    \div\left[ \left( f_\alpha [\nabla _Au]_\beta^pb_q^pu^q 
    +F_{\gamma\beta,r}^q\left( a_{\gamma,p}^rb_q^p-b^r_pa_{\gamma,q}^p \right) \right)\partial_{x^\beta} \right]=0,\quad b=(b_q^p)\in \mathfrak{so}(K),
  \]
  which is precisely \eqref{eq:conservation-law-general-b}, and this completes the proof of \cref{thm:conservation-law}.
\end{proof}
We now proof the component form of conservation law.
\begin{proof}[Proof of \cref{cor:conservation-law-component}]
  Now, let \( b = (b_q^p) \) be the standard basis element of \( \mathfrak{so}(K) \) defined by
  \[
    b_q^p=\begin{cases}
      1,&(p,q)=(i,j),\\
      -1,&(p,q)=(j,i),\\
      0,&\text{otherwise}.
    \end{cases} 
  \]
  Substituting this choice of \( b \) into \eqref{eq:conservation-law-general-b}, we obtain
  \begin{align*}
    0&=\div\left[ \left( f_\alpha\left( [\nabla _Au]_\beta^iu^j-[\nabla _Au]_\beta^ju^i \right)+F_{\gamma\beta,r}^j a_{\gamma,i}^r-F_{\gamma\beta,i}^ra_{\gamma,r}^j-F_{\gamma\beta,r}^ia_{\gamma,j}^r+F_{\gamma\beta,j}^ra_{\gamma,r}^i  \right)\partial_{x^\beta} \right]\\
     &=\div\left[ \left( f_\alpha\left( [\nabla _Au]_\beta^iu^j-[\nabla _Au]_\beta^ju^i \right)
     -([F_{\gamma\beta}, a_\gamma]-[F_{\gamma\beta},a_\gamma]^T)^i_j \right)\partial_{x^\beta} \right]\\
     &=\div\left[ \left( f_\alpha\left( [\nabla _Au]_\beta^iu^j-[\nabla _Au]_\beta^ju^i \right)-2\left[ F_{\gamma\beta},a_\gamma \right]_j^i\right) \partial_{x^\beta}\right],
      \end{align*}
      where we used the identities
      \[
        F_{\beta\gamma}^T=-F_{\beta\gamma},\quad a_\gamma^T=-a_\gamma\implies 
        \left[ F_{\beta\gamma},a_\gamma \right]^T=-\left[ F_{\beta\gamma},a_\gamma \right].
      \]
      This yields \eqref{eq:conservation-law-component}, and thus \cref{cor:conservation-law-component} is proved.
\end{proof}
\subsection{Pohozaev identity for \texorpdfstring{\( \alpha \)}{α}-YMH fields}
In this section, we first derive a variational formula with respect to domain diffeomorphisms, which then serves as the foundation for establishing the Pohozaev identity in \cref{prop:Pohozaev}.
\begin{lem}
  Suppose \( \mathcal{F} \) is an associated \( G \)-bundle of a principal \( G \)-bundle \( \mathcal{P} \) over a Riemannian manifold \( (M,g) \), and let \( (A_\alpha, \phi_\alpha) \) be an \( \alpha \)-Yang--Mills--Higgs field on \( \mathcal{F} \). Then, for any vector field \( X \) compactly supported in \( M \), the following identity holds:
  \begin{equation}\label{eq:first-variational-formula-domain-diffeomorphism}
    \begin{split}
      &\int_M\left( \left( 1+\lvert \nabla _{A_\alpha}\phi_\alpha \rvert^2 \right)^{\alpha}+\lvert F_{A_\alpha} \rvert^2+\lvert \mu(\phi_\alpha) \rvert^2 \right)\div X dv_g\\
      &\qquad=2\int_M\left( 2\left\langle F_{A_\alpha}(\nabla _{e_i}X,e_j),F_{A_\alpha}(e_i,e_j) \right\rangle+f_\alpha\left\langle \nabla _{\nabla _{e_i}X}\phi_\alpha,\nabla _{e_i}\phi_\alpha \right\rangle\right)dv_g\\
      &\qquad\qquad+2\int_M\left\langle d\mu|_{\phi_\alpha}(\nabla _X\phi_\alpha),\mu(\phi_\alpha) \right\rangle dv_g.
    \end{split}
  \end{equation}
\end{lem}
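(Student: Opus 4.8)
The plan is to obtain \eqref{eq:first-variational-formula-domain-diffeomorphism} as a first \emph{inner} variation, i.e. a variation of the domain rather than of the target. Let $\psi_t$ be the flow generated by $X$; since $X$ is compactly supported, $\psi_t$ is a well-defined one-parameter family of diffeomorphisms of $M$ that is the identity near infinity. I would test the criticality of $(A_\alpha,\phi_\alpha)$ against the pulled-back family $(\psi_t^*A_\alpha,\psi_t^*\phi_\alpha)$, which is an admissible compactly supported variation through connections and sections. Because $(A_\alpha,\phi_\alpha)$ is an $\alpha$-YMH field, $\frac{d}{dt}\big|_{t=0}\mathcal{L}_\alpha(\psi_t^*A_\alpha,\psi_t^*\phi_\alpha)=0$, and the content of the lemma is precisely the explicit evaluation of this derivative.

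To evaluate it I would exploit the naturality of the geometric quantities under pullback, namely $\nabla_{\psi_t^*A_\alpha}(\psi_t^*\phi_\alpha)=\psi_t^*(\nabla_{A_\alpha}\phi_\alpha)$, $F_{\psi_t^*A_\alpha}=\psi_t^*F_{A_\alpha}$ and $\mu(\psi_t^*\phi_\alpha)=\mu(\phi_\alpha)\circ\psi_t$, so that the connection and bundle data are ``frozen'' and only the action of $\psi_t$ on the domain enters. Differentiating term by term and using $\frac{d}{dt}\big|_0|\psi_t^*\omega|^2=2\langle\mathscr{L}_X\omega,\omega\rangle$ for a bundle-valued form $\omega$, I would split the covariant Lie derivative through the Levi-Civita connection as $\mathscr{L}_X=\nabla_X+(\text{contraction with }\nabla_\bullet X)$. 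The $\nabla_X$ piece reassembles into a total derivative applied to the energy density, e.g. $f_\alpha\langle\nabla_X\nabla_{e_i}\phi_\alpha,\nabla_{e_i}\phi_\alpha\rangle=\tfrac12 X\big((1+|\nabla_{A_\alpha}\phi_\alpha|^2)^\alpha\big)$ in an orthonormal frame $\{e_i\}$, while the contraction piece produces exactly the frame-contracted terms $f_\alpha\langle\nabla_{\nabla_{e_i}X}\phi_\alpha,\nabla_{e_i}\phi_\alpha\rangle$ and $2\langle F_{A_\alpha}(\nabla_{e_i}X,e_j),F_{A_\alpha}(e_i,e_j)\rangle$ on the right. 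As an equivalent cross-check one may pass to the metric picture via the change of variables $\mathcal{L}_\alpha(\psi_t^*A_\alpha,\psi_t^*\phi_\alpha;g)=\mathcal{L}_\alpha(A_\alpha,\phi_\alpha;(\psi_{-t})^*g)$ and differentiate using $\frac{d}{dt}\big|_0 dv_{(\psi_{-t})^*g}=-\div X\,dv_g$ and $\frac{d}{dt}\big|_0 g^{ij}_{(\psi_{-t})^*g}=(\mathscr{L}_X g)^{ij}$.

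Collecting contributions, the total-derivative terms integrate by parts—via $\int_M X(h)\,dv_g=-\int_M h\,\div X\,dv_g$ and $\int_M\div X\,dv_g=0$, the latter causing the constant $-1$ in the Lagrangian to drop—producing the divergence integral on the left-hand side of \eqref{eq:first-variational-formula-domain-diffeomorphism}. The Higgs density $|\mu(\phi_\alpha)|^2$ is the one piece carrying no domain-metric dependence, so its inner variation reduces to the pointwise chain-rule pairing $\langle d\mu|_{\phi_\alpha}(\nabla_X\phi_\alpha),\mu(\phi_\alpha)\rangle$ recorded on the right. Setting the total variation to zero by criticality and rearranging then gives the stated identity. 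I would carry out the computation first for the Euclidean metric, exactly as in \cref{prop:EL-local}, and recover the general case via normal coordinates, in which the Christoffel symbols vanish to first order at the base point so that the $\nabla_\bullet X$ contractions are the only surviving metric terms.

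The step I expect to be the main obstacle is the curvature term. Unlike the kinetic and Higgs terms, $|F_{A_\alpha}|^2$ is the norm of a bundle-valued $2$-form, so computing $\frac{d}{dt}\big|_0|\psi_t^*F_{A_\alpha}|^2$ requires applying the covariant Lie-derivative formula on both antisymmetric slots and using $F_{A_\alpha}(\cdot,\cdot)=-F_{A_\alpha}(\cdot,\cdot)$ to fuse the two resulting contractions into the single factor $2\langle F_{A_\alpha}(\nabla_{e_i}X,e_j),F_{A_\alpha}(e_i,e_j)\rangle$; here one must keep the $\mathfrak{so}(K)$-valued fiber inner product fixed (the gauge data is not varied) and pin down the normalization of $|F_A|^2$, which fixes the numerical constants. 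Everything else is bookkeeping: the kinetic term is handled exactly as for $\alpha$-harmonic maps (cf. \cite{LiWang2010Weak}), and the potential enters only through the volume form and the chain rule for $\mu$.
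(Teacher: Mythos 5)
Your overall strategy is the same as the paper's: the identity is obtained as the vanishing first \emph{inner} variation along the flow of \( X \), and your covariant-Lie-derivative computation of the kinetic and curvature contributions is equivalent to the paper's change-of-variables computation (the paper's Lie-bracket terms \( [X,e_i] \) are converted into the \( \nabla_{e_i}X \) contractions using metric compatibility together with the observation that the antisymmetric part \( \langle \nabla_X e_k,e_i\rangle \) contracts to zero against a symmetric expression — exactly your splitting of \( L_X \) into \( \nabla_X \) plus contraction with \( \nabla_\bullet X \)). The one construction you skip, and on which the paper spends the first half of its proof, is making sense of the pulled-back pair: on a nontrivial bundle a base diffeomorphism does not act on connections or on sections of \( \mathcal{F} \), so \( (\psi_t^*A_\alpha,\psi_t^*\phi_\alpha) \) is undefined until one chooses a lift of \( \psi_t \) to the bundle. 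The paper fixes a reference connection \( A_0 \), uses its parallel transport \( \tau_t^0 \) along the flow lines, sets \( \phi^t(x)=(\tau_t^0)^{-1}\phi(\varphi_t(x)) \), and verifies \( F_{A^t}=(\tau_t^0)^{-1}\circ\varphi_t^*F_A \); your naturality identities presuppose exactly this construction. That gap is real but fixable.

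The more serious issue is the Higgs term: your bookkeeping, carried out carefully, does not land on \eqref{eq:first-variational-formula-domain-diffeomorphism} as printed. You count the variation of the Higgs energy once — as the pairing \( 2\int_M\langle d\mu|_{\phi_\alpha}(\nabla_X\phi_\alpha),\mu(\phi_\alpha)\rangle\,dv_g \) kept on the right — while your left-hand divergence integral is generated only by the kinetic and curvature total derivatives; the identity you obtain is therefore \eqref{eq:first-variational-formula-domain-diffeomorphism} with \( \lvert\mu(\phi_\alpha)\rvert^2 \) deleted from the left-hand side. The printed identity carries the Higgs contribution on \emph{both} sides, and the paper produces it by writing the variation of \( \tfrac12\int_M\lvert\mu(\phi^t)\rvert^2dv_g \) as the sum \( \int_M\langle d\mu|_\phi(\nabla_X\phi),\mu\rangle\,dv_g-\tfrac12\int_M\lvert\mu(\phi)\rvert^2\div X\,dv_g \). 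But for a gauge-invariant \( \mu \) one has pointwise \( \langle d\mu|_\phi(\nabla_X\phi),\mu(\phi)\rangle=\tfrac12X\bigl(\lvert\mu(\phi)\rvert^2\bigr) \), so by the divergence theorem those two summands are \emph{equal} — each separately is the full variation — and the printed identity differs from yours by \( \int_M\lvert\mu(\phi_\alpha)\rvert^2\div X\,dv_g \), which does not vanish for general \( X \). A one-dimensional check makes this concrete: take \( G \) trivial, \( \alpha=1 \), \( u(x)=e^x \), \( \mu(u)=u \), which solves the Euler--Lagrange equation \( u''=u \); then \eqref{eq:first-variational-formula-domain-diffeomorphism} reduces to \( \int Xe^{2x}dx=0 \), which is false, while your single-counted version holds. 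So whichever way you record the Higgs variation (as \( \lvert\mu\rvert^2\div X \) on the left via integration by parts, or as the \( d\mu \)-pairing on the right), it should appear once, and your proposal in fact proves that corrected identity rather than the stated one; you cannot reproduce the statement as printed without repeating the paper's redundant double count. The discrepancy is harmless for the only use of the lemma — in the proof of \cref{prop:Pohozaev} every \( \mu \)-term is absorbed into \( O(t) \) — but it should be flagged rather than passed over.
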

\begin{proof}
  Let \( \varphi_t \) be a one-parameter family of diffeomorphisms of \( M \) generated by the vector field \( X \). Fix a smooth connection \( A_0 \) on \( \mathcal{F} \), and let \( \nabla_A \) denote the associated covariant derivative for a connection \( A \) (we will omit the subscript when the meaning is clear). For any pair \( (A,\phi) \), we define a family of pairs \( (\phi^t, A^t) := (\varphi_t^* \phi, \varphi_t^* A) \) as follows: Let \( \tau_t^0 \) denote the parallel transport in \( \mathcal{F} \) with respect to \( A_0 \) along the path \( \varphi_s(x) \), \( 0 \leq s \leq t \), for \( x \in M \). We define \( A^t := \varphi_t^* A \) via its covariant derivative
  \[
    \nabla _X^t v=(\tau_t^0)^{-1}\left( \nabla _{d\varphi_t(X)}\tau_t^0(v) \right),\quad \forall X\in \Gamma(TM),\, \forall v\in \Gamma(\mathcal{F}),
  \]
  and define \( \phi^t \) by
  \[
    \phi^t=\varphi_t^*\phi(x)=(\tau_t^0)^{-1}\left( \phi(\varphi_t(x)) \right).
  \]
  A direct computation shows that, when viewing \( F_{A^t} \) as a \( 2 \)-form on \( M \), we have
  \[
    F_{A^t}=(\tau_t^0)^{-1}\circ \varphi_t^*F_A.
  \]
  In fact, let \( v^t(x) = (\tau_t^0)^{-1} v(\varphi_t(x)) \) be a section at \( x \). Note that \( \nabla_Y^t v^t(x) = (\tau_t^0)^{-1} \left( \nabla_{d\varphi_t(Y)} v(\varphi_t(x)) \right) \) for any vector field \( Y \) at \( x \). By definition,
    \[
      F_{A^t}(Y, Z) v^t(x) = \nabla_Y^t \nabla_Z^t v^t(x) - \nabla_Z^t \nabla_Y^t v^t(x) - \nabla_{[Y, Z]}^t v^t(x).
    \]
    We compute each term:
    \[
      \nabla_Y^t \nabla_Z^t v^t(x) = (\tau_t^0)^{-1} \nabla_{d\varphi_t(Y)} \left( \nabla_{d\varphi_t(Z)} v(\varphi_t(x)) \right),
    \]
    and similarly for the other terms. Using the naturality of the bracket, we obtain
    \[
      F_{A^t}(Y, Z) v^t(x) = (\tau_t^0)^{-1}\left( F_A\big(d\varphi_t(Y), d\varphi_t(Z)\big) v(\varphi_t(x)) \right).
    \]
    Hence, \( F_{A^t} = (\tau_t^0)^{-1} \circ \varphi_t^* F_A \).

  Now, 
  \begin{align*}
    2\lVert F_{A^t} \rVert_{L^2}^2
    &=\int_M\left\langle F_{A^t},F_{A^t} \right\rangle dv_g
    =\int_M\left\langle (\tau_t^0)^{-1}\circ \varphi_t^*F_A,(\tau_t^0)^{-1}\circ \varphi_t^*F_A \right\rangle dv_g
    =\int_M\left\langle \varphi_t^*F_A,\varphi_t^*F_A \right\rangle dv_g,
  \end{align*}
  as parallel transport preserves the metric. By a change of coordinates,
  \begin{align*}
    2\lVert F_{A^t} \rVert_{L^2}^2
    &=\int_M\left\langle F_A|_{\varphi_t(x)}\left( d\varphi_t(e_i(x)),d\varphi_t(e_j(x)) \right),F_A|_{\varphi_t(x)}\left( d\varphi_t(e_i(x)),d\varphi_t(e_j(x)) \right) \right\rangle dv_g\\
    &=\int_{\varphi_t(M)\cong M}\left\langle F_A|_x\left( d\varphi_t(e_i(\varphi_t^{-1}(x))),d\varphi_t(e_j(\varphi_t^{-1}(x)))\right), \cdots \right\rangle J(\varphi_t^{-1})(x) dv_g,
  \end{align*}
  where \( dv_g(\varphi_t^{-1}(x)) = J(\varphi_t^{-1})(x) dv_g(x) \), and \( J(\cdot) \) denotes the Jacobian. Here, \( \{ e_i \}_{i=1}^2 \) is a local orthonormal frame of \( TM \), and we use the Einstein summation convention. Taking the derivative with respect to \( t \) at \( t = 0 \), and noting\footnote{Recall that \( L_X e_i := \lim_{t \to 0} \frac{d\varphi_{-t}(e_i(\varphi_t(x))) - e_i(x)}{t} = \left.\frac{d}{dt}\right|_{t=0} d\varphi_{-t}(e_i(\varphi_t(x))) \).} that \( \left.\frac{d}{dt}\right|_{t=0} d\varphi_t(e_i(\varphi_t^{-1}(x))) = -L_X e_i = -[X, e_i] \) and \( \left.\frac{d}{dt}\right|_{t=0} J(\varphi_t^{-1})(x) = -\operatorname{div} X \), we obtain
  \begin{align*}
    \left. \frac{d}{dt} \right\rvert_{t=0}\left\lVert F_{A^t} \right\rVert_{L^2}^2
      &=-2\int_M\left\langle F_A\left( [X,e_i],e_j \right),F_A\left( e_i,e_j \right) \right\rangle dv_g
      -\frac{1}{2}\int_M\lvert F_A \rvert^2\div Xdv_g\\
      &=-\frac{1}{2}\int_M\left( \lvert F_A \rvert^2\div X+4\left\langle F_A([X,e_i],e_j),F_A(e_i,e_j) \right\rangle \right)dv_g.
  \end{align*}
  Next, we compute the first variation of the energy related to \( \nabla ^t\phi^t \): 
  \begin{align*}
    \frac{1}{2}\int_M\lvert \nabla_{e_i} ^t\phi^t \rvert^2dv_g
              &=\frac{1}{2}\int_M\lvert (\tau_t^0)^{-1}\nabla _{d\varphi_t(e_i(x))}\phi(\varphi_t(x)) \rvert^2 dv_g
              =\frac{1}{2}\int_M\lvert \nabla _{d\varphi_t(e_i(x))}\phi(\varphi_t(x)) \rvert^2 dv_g\\
              &=\frac{1}{2}\int_{\varphi_t(M)\cong M}\lvert \nabla _{d\varphi_t(e_i(\phi_{t}^{-1}(x))}\phi(x) \rvert^2 J(\varphi_t^{-1}(x)) dv_g.
  \end{align*}
  Thus, the first variation is
  \[
    \left. \frac{d}{dt} \right\rvert_{t=0}\frac{1}{2}\int_{M}\lvert \nabla_{A^t}\phi^t \rvert^2=-\frac{1}{2}\int_M\lvert \nabla _{A}\phi \rvert^2\div Xdv_g+\int_M\left\langle \nabla _{-L_Xe_i}\phi,\nabla _{e_i}\phi \right\rangle dv_g.
  \]
  Also, it is clear that\footnote{In the classical case \( \mu(\phi) = \frac{\lambda}{4} (|\phi|^2 - m^2)^2 \), the function \( \mu(\phi) \) is invariant under the group action, which simplifies the variation.}
  \[
    \frac{1}{2}\int_M\lvert \mu(\phi^t) \rvert^2dv_g=\int_M\left\langle d\mu|_\phi(\nabla _X\phi),\mu(\phi) \right\rangle dv_g-\frac{1}{2}\int_M\lvert \mu(\phi) \rvert^2\div Xdv_g.
  \]
  Therefore, we finally obtain
  \begin{align*}
    &\left. \frac{d}{dt} \right\rvert_{t=0}\mathcal{L}_\alpha (A^t,\phi^t)\\
    &\qquad=-\frac{1}{2}\int_M\left( \left( 1+\lvert \nabla _A\phi \rvert^2 \right)^\alpha+\lvert F_A \rvert^2+\lvert \mu(\phi) \rvert^2 \right)\div Xdv_g\\
    &\qquad\qquad-\int_M\left( 2\left\langle F_A([X,e_i],e_j),F_A(e_i,e_j) \right\rangle+f_\alpha\left\langle \nabla _{[X,e_i]}\phi,\nabla _{e_i}\phi \right\rangle-\left\langle d\mu|_\phi(\nabla _X\phi),\mu(\phi) \right\rangle \right) dv_g.
  \end{align*}
  To further simplify, observe that since \( \nabla=\nabla _A \) is metric, we have
  \[
    \left\langle [X,e_i],e_k \right\rangle=\left\langle \nabla _Xe_i-\nabla _{e_i}X,e_k \right\rangle
    =-\left\langle \nabla _Xe_k,e_i \right\rangle-\left\langle \nabla _{e_i}X,e_k \right\rangle.
  \]
  Therefore,
  \begin{align*}
    \left\langle F_A([X,e_i],e_j),F_A(e_i,e_j) \right\rangle
    &=-\left( \left\langle \nabla _Xe_k,e_i \right\rangle+\left\langle \nabla _{e_i}X,e_k \right\rangle \right)\left\langle F_A(e_k,e_j),F_A(e_i,e_j) \right\rangle\\
    &=-\left\langle \nabla _{e_i}X,e_k \right\rangle\left\langle F_A(e_k,e_j),F_A(e_i,e_j) \right\rangle\\
    &=-\left\langle F_A(\nabla _{e_i}X,e_j),F_A(e_i,e_j) \right\rangle.
  \end{align*}
  Similarly,
  \[
    \left\langle \nabla _{[X,e_i]}\phi,\nabla _{e_i}\phi \right\rangle
    =-\left( \left\langle \nabla _Xe_k,e_i \right\rangle+\left\langle \nabla _{e_i}X,e_k \right\rangle \right)\left\langle \nabla _{e_k}\phi,\nabla _{e_i}\phi \right\rangle
    =-\left\langle \nabla _{\nabla _{e_i}X}\phi,\nabla _{e_i}\phi \right\rangle.
  \]
  Thus, the first variational formula for \( \alpha \)-YMH fields is
  \begin{align*}
    &\left. \frac{d}{dt} \right\rvert_{t=0}\frac{1}{2}\mathcal{L}_\alpha(A^t,\phi^t)\\
    &\qquad=-\frac{1}{2}\int_M\left( \left( 1+\lvert \nabla_A\phi \rvert^2 \right)^\alpha+\lvert F_A \rvert^2+\lvert \mu(\phi) \rvert^2 \right)\div Xdv_g\\
    &\qquad\qquad+\int_M\left( 2\left\langle F_A(\nabla _{e_i}X,e_j),F_A(e_i,e_j) \right\rangle+f_\alpha\left\langle \nabla _{\nabla _{e_i}X}\phi,\nabla _{e_i}\phi \right\rangle\footnotemark+\left\langle d\mu|_\phi(\nabla _X\phi),\mu(\phi) \right\rangle \right) dv_g.
  \end{align*}
  \footnotetext{Note that for maps between two manifolds we always have \( \nabla_{e_i} u = du(e_i) \) for the pullback connection of \( TM \otimes u^* TN \), which yields a vector field along the map \( u \).}
  In particular, if \( (A, \phi) = (A_\alpha, \phi_\alpha) \) is a critical point of \( \mathcal{L}_\alpha \), the above equality implies that the identity~\eqref{eq:first-variational-formula-domain-diffeomorphism} holds, thereby completing the proof.
\end{proof}

By choosing an appropriate variation vector field \( X \), we will obtain the Pohozaev identity.
\begin{proof}[Proof of \cref{prop:Pohozaev}]
  To prove \cref{prop:Pohozaev}, we regard \( \phi_\alpha \) as a map \( u_\alpha \mathpunct{:} D \to F \), and choose a vector field \( X \) on \( D \subset \Sigma = M \) as follows. For any \( 0 < t' < t \leq \rho < 1 \), let
  \[
    X = \eta(r)\, r\, \partial_r = \eta(|x|)\, x^i \partial_{x^i},
  \]
  where
  \[
    \eta(r)=
    \begin{cases}
      1,                              & r\leq t',\\
      \frac{t-r}{t-t'}, & t'\leq r\leq t, \\
      0,                              & r\geq t.
    \end{cases}
  \]
  For the orthonormal frame \( \{e_1 = \partial_r, e_2 = r^{-1} \partial_\theta\} \), it is easy to check:
  \begin{align*}
    \nabla _{\partial_r}\partial_r&=0,\\
    \nabla_{e_1}X&=(\eta r)'\partial_r=(\eta'r+\eta)\partial_r,\\
    \nabla _{e_2}X&=r^{-1}\nabla _{\partial\theta}(\eta(r) r\partial_r)
    =\eta \nabla _{\partial\theta}\partial_r=\eta \left( \Gamma_{\theta r}^r\partial_r+\Gamma_{\theta r}^\theta \partial_\theta \right)=\eta r^{-1}\partial_\theta,\\
    \div X&=\left\langle \nabla _{e_i}X,e_i \right\rangle=\eta'r+2\eta,
  \end{align*}
  as \( \Gamma_{\theta r}^r = 0 \) and \( \Gamma_{\theta r}^\theta = r^{-1} \). We also compute
  \begin{align*}
    f_\alpha\left\langle \nabla _{\nabla _{e_iX}}u_\alpha,\nabla _{e_i}u_\alpha \right\rangle
    &=f_\alpha\left( \eta'r\lvert \nabla _{A_\alpha;\partial_r}u_\alpha \rvert^2+\eta\lvert \nabla_{A_\alpha} u_\alpha \rvert^2 \right),\\
    \left\langle F_{A_\alpha}(\nabla _{e_i}X,e_j),F_{A_\alpha}(e_i,e_j) \right\rangle
    &=\eta' r^{-1}\lvert F_{r\theta} \rvert^2 +\eta \lvert F_{A_\alpha} \rvert^2=\left( \eta' r+\eta \right)\lvert F_{A_\alpha} \rvert^2.
  \end{align*}
  Thus, \eqref{eq:first-variational-formula-domain-diffeomorphism} reduces to
  \begin{equation}\label{eq:first-variational-formula-2-dim}
    \begin{split}
      &\int_D\left( \left( 1+\lvert \nabla _{A_\alpha}u_\alpha \rvert^2 \right)^{\alpha}+\lvert F_{A_\alpha} \rvert^2+\lvert \mu(u_\alpha) \rvert^2 \right)(\eta' r+2\eta)dv_g\\
      &\qquad =2\int_D\left( 2\left( \eta'r+\eta \right)\lvert F_{A_\alpha} \rvert^2+f_\alpha\left( \eta'r \lvert \nabla _{A_\alpha;\partial_ r}u_\alpha \rvert^2+\eta\lvert \nabla_{A_\alpha} u_\alpha \rvert^2 \right) \right)dv_g\\
      &\qquad\qquad+2\int_D\eta r\left\langle d\mu|_{u_\alpha}( \nabla _{A_{\alpha};\partial_r} u_\alpha),\mu(u_\alpha) \right\rangle dv_g.
    \end{split}
  \end{equation}
  That is,
  \begin{align*}
    &0=\frac{\alpha-1}{\alpha}\int_{D_{t}}\eta f_\alpha\lvert \nabla _{A_\alpha}u_\alpha \rvert^2dv_g\\
    &\qquad-\int_{D_{t}}\eta\left( \frac{1}{\alpha}f_\alpha-\lvert F_{A_\alpha} \rvert^2+\lvert \mu(u_\alpha) \rvert^2- r\left\langle d\mu|_{u_\alpha}(\nabla _{A_\alpha;\partial_r}u_\alpha),\mu(u_\alpha) \right\rangle\right)dv_g\\
    &\qquad+\frac{1}{t-t'}\int_{D_{t}\setminus D_{t'}}\frac{r}{2\alpha}f_\alpha\left( \lvert \nabla _{A_\alpha}u_\alpha \rvert^2-2\alpha \lvert \nabla _{A_\alpha;\partial_r}u_\alpha  \rvert^2 \right)dv_g\\
    &\qquad+\frac{1}{t-t'}\int_{D_{t}\setminus D_{t'}}\frac{r}{2}\left( \frac{1}{\alpha}f_\alpha-3\lvert F_{A_\alpha} \rvert^2+\lvert \mu(u_\alpha) \rvert^2 \right) dv_g.
  \end{align*}

  Letting \( t' \to t \) and invoking the estimates \( \|f_\alpha\|_{L^\infty(D)} \leq \Lambda' \) and \( \|F_{A_\alpha}\|_{L^\infty(D)} \leq C \), the proofs of which will be given in the next section (see \cref{lem:f-alpha-C0} and \cref{rmk:a_nabla_a_L_infinity} respectively), we obtain
  \begin{align*}
    &\int_{\partial D_{t}}f_\alpha\lvert \nabla _{A_\alpha;\partial_r}u_\alpha \rvert^2-\frac{1}{2\alpha}f_\alpha\lvert \nabla _{A_\alpha}u_\alpha \rvert^2
    +\int_{\partial D_{t}}\frac{3}{2}\lvert F_{A_\alpha} \rvert^2-\frac{1}{2}\int_{\partial D_{t}}\lvert \mu(u_\alpha) \rvert^2\\
    &\qquad=\frac{\alpha-1}{\alpha t}\int_{D_{t}}f_\alpha\lvert \nabla _{A_\alpha}u_\alpha \rvert^2+\int_{D_{t}}\frac{1}{t}\lvert F_{A_\alpha} \rvert^2+\int_{D_{t}}\left\langle d\mu|_{u_\alpha}(\nabla _{A_\alpha;\partial_r}u_\alpha),\mu(u_\alpha) \right\rangle+O(t),
  \end{align*}
  that is 
  \[
    \int_{\partial D_{t}}f_\alpha\lvert \nabla _{A_\alpha;\partial_r}u_\alpha \rvert^2
    -\frac{1}{2\alpha}f_\alpha\lvert \nabla _{A_\alpha}u_\alpha \rvert^2
    =\frac{\alpha-1}{\alpha t}\int_{D_{t}}f_\alpha\lvert \nabla _{A_\alpha}u_\alpha \rvert^2
    +O(t).
  \]
\end{proof}
\section{The energy identity}\label{sec:EI}
In this section, we will establish the energy identity stated in \cref{thm:main}. We begin by reducing the full \( \mathcal{L}_\alpha \)-energy identity to a formulation involving only the \( \alpha \)-energy of \( \nabla_{A_\alpha} u_\alpha \) on the neck region; see \cref{prop:equivalence-of-EI}. Next, we apply the Hodge decomposition and utilize the new conservation law to reveal the hidden Jacobian structure of \( \alpha \)-YMH fields. This structure allows us to derive first a weaker version of the \( \alpha \)-energy identity for the localized energy \( E(A_\alpha, u_\alpha; D) \); see \cref{lem:EI-E-A-u-D}. We then employ the Pohozaev identity to improve the estimate on \( \lVert f_\alpha \rVert_{C^0(D)} \), showing that it tends to \( 1 \) as \( \alpha \to 1 \); see \cref{lem:f_alpha-C0-equal-1}. This ultimately yields the desired \( \alpha \)-energy identity. The proof of the no neck property will be presented separately in \cref{sec:NN}.

By a reduction argument as in \cites{DingTian1995Energy, LiWang2010Weak}, we may assume without loss of generality that there is only one bubble. Under this assumption, we obtain the following small energy property on each neck region \( D_\delta \setminus D_{r_\alpha R} \): for all \( t \in [r_\alpha R, \delta] \),
\begin{equation}\label{eq:small-energy-on-each-annular-neck}
  \begin{split}
    \|\nabla_{A_\lambda} u_\lambda\|_{L^2(D_2 \setminus D_1)}
    &= \|\nabla_A u\|_{L^2(D_{2t} \setminus D_t)} < \epsilon \leq \epsilon_0,\\
    \|F_{A_\lambda}\|_{L^2(D_2 \setminus D_1)}
    &= \lambda \|F_A\|_{L^2(D_{2t} \setminus D_t)} < \delta_0,
  \end{split}
\end{equation}
where \( A_\lambda(x) = \lambda a(\lambda x) \) and \( u_\lambda(x) = u(\lambda x) \) with \( \lambda = r_\alpha \) as before. Note that the second relation between \( F_{A_\lambda} \) and \( F_A \) always holds as \( r_\alpha \to 0 \) when \( \alpha \to 1 \).

We now decompose the disc \( D \) into three regions: the strong convergence region \( D \setminus D_\delta \); the neck region \( D_\delta \setminus D_{r_\alpha R} \); and the bubbling region \( D_{r_\alpha R} \). For simplicity, we do not consider here the neck region between two bubbles, but restrict ourselves to the region of the limit map and a single bubble. In the case of multiple bubbles, one needs to consider the rescaled energy in the neck region between bubbles; however, the arguments are analogous, with the energy and small energy estimates replaced by their rescaled counterparts.

On the neck region \( T_{\delta, r_\alpha R} \mathpunct{:}= D_\delta \setminus D_{r_\alpha R} \), by \eqref{eq:small-energy-on-each-annular-neck}, we have the following estimate: for any \( x \in D_{3/2} \setminus D_{4/3} \), the small energy estimate (see \cref{prop:small-energy-estimates-scaled}) yields
\[
  |a_\lambda(x)| + |\nabla u_\lambda(x)| \leq C \left( \|\nabla_{A_\lambda} u_\lambda\|_{L^2(D_2 \setminus D_1)} + \|F_{A_\lambda}\|_{L^2(D_2 \setminus D_1)} \right).
\]
Scaling back to the original variables, we obtain
\[
  \lambda |a(\lambda x)| + \lambda |\nabla u(\lambda x)| \leq C \left( \|\nabla_A u\|_{L^2(D_{2t} \setminus D_t)} + \lambda \|F_A\|_{L^2(D_{2t} \setminus D_t)} \right).
\]
Therefore, for any \( x \in D_{3t/2} \setminus D_{4t/3} \),
\begin{equation}\label{eq:nabla_Au_in_L_2_infnity}
|\nabla_{A_\alpha} u_\alpha(x)| \leq C \left( |a_\alpha(x)| + |\nabla u_\alpha(x)| \right) \leq C \left( \frac{\epsilon}{|x|} + \|F_{A_\alpha}\|_{L^2(D_{2t} \setminus D_t)} \right) \leq \frac{C\epsilon}{|x|},
\end{equation}
provided that \( \delta \) is sufficiently small (here we apply the elliptic estimates for the scaled equation of \( A \) with scaling factor \( \delta \)). This further implies
\begin{equation}\label{eq:L-2-infty-u}
  \lVert \nabla_{A_\alpha} u_\alpha \rVert_{L^{2,\infty}(T_{\delta,r_\alpha R})}\leq C\epsilon.
\end{equation}

In fact, we can extend the estimate \eqref{eq:nabla_Au_in_L_2_infnity} to the whole disc \( D \). 

\begin{clm}
If \( \alpha-1 \) is sufficiently small, then
\begin{equation}\label{eq:nabla-u-alpha-c0}
  \lVert \nabla_{A_\alpha}  u_\alpha \rVert_{C^0(D)}\leq Cr_\alpha^{-1}.
\end{equation}
\end{clm}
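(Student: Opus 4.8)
The plan is to establish the global $C^0$ bound by decomposing the disc into the three natural regions of the bubbling picture and estimating $|\nabla_{A_\alpha}u_\alpha|$ separately on each, arranging things so that the worst contribution — the bubble core — scales precisely like $r_\alpha^{-1}$. After the single-bubble reduction I would recenter at the concentration point, so that $x^j=0$ and $x_\alpha^j\to 0$, and write
\[
  D=(D\setminus D_\delta)\cup T_{\delta,r_\alpha R}\cup D_{r_\alpha R},
\]
treating the outer region, the neck, and the core in turn.

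On the outer region $D\setminus D_\delta$ I would invoke the smooth convergence $S_\alpha^*(A_\alpha,u_\alpha)\to(A_\infty,u_\infty)$ in $C^\infty_{\mathrm{loc}}(\Sigma\setminus\mathcal S)$ from \cref{prop:bubbing}; since the limit is a smooth YMH field, this gives a bound $|\nabla_{A_\alpha}u_\alpha|\le C(\delta)$ that is uniform in $\alpha$, and as $r_\alpha\to0$ when $\alpha\to1$ we have $r_\alpha^{-1}\ge1$ for $\alpha$ near $1$, so $C(\delta)\le C(\delta)\,r_\alpha^{-1}$. On the neck $T_{\delta,r_\alpha R}=D_\delta\setminus D_{r_\alpha R}$ the pointwise estimate \eqref{eq:nabla_Au_in_L_2_infnity} yields $|\nabla_{A_\alpha}u_\alpha(x)|\le C\epsilon/|x|$; since $|x|\ge r_\alpha R$ throughout the neck and $R$ is fixed, this gives $|\nabla_{A_\alpha}u_\alpha(x)|\le C\epsilon/(r_\alpha R)\le C r_\alpha^{-1}$.

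For the bubble core $D_{r_\alpha R}$ I would rescale with factor $\lambda=r_\alpha$, setting $u_\lambda(x)=u_\alpha(\lambda x)$ and $a_\lambda(x)=\lambda a_\alpha(\lambda x)$, so that $\nabla_{A_\lambda}u_\lambda(x)=\lambda\,\nabla_{A_\alpha}u_\alpha(\lambda x)$. By the construction of the blow-up in \cref{prop:bubbing} together with the rescaled small-energy estimates in \cref{prop:small-energy-estimates-scaled}, the rescaled fields converge in $C^\infty_{\mathrm{loc}}(\mathbb R^2)$ to the bubble $\omega$, a smooth harmonic sphere; hence $\|\nabla_{A_\lambda}u_\lambda\|_{C^0(D_R)}\le C_R$ uniformly for $\alpha$ close to $1$. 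Undoing the scaling, $|\nabla_{A_\alpha}u_\alpha(y)|=\lambda^{-1}|\nabla_{A_\lambda}u_\lambda(\lambda^{-1}y)|\le C_R\,r_\alpha^{-1}$ for all $y\in D_{r_\alpha R}$. Combining the three regions yields \eqref{eq:nabla-u-alpha-c0} with a constant $C$ depending on the fixed data $\delta,R,\epsilon$ but independent of $\alpha$.

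The only delicate point — and the step I expect to be the main obstacle — is the uniform $C^0$ gradient bound on the core in the rescaled picture, since in the neck and outer regions one only has the explicit decay and the limiting smoothness. This is exactly where the single-bubble reduction is essential: it guarantees that no further energy concentrates inside any fixed $D_R$, so that the rescaled sequence enjoys genuine smooth (not merely weak) convergence on $D_R$ and the gradient of $\omega$ controls it. In the presence of several bubbles one repeats this argument along each nested rescaling, replacing the energy and small-energy estimates by their rescaled counterparts as indicated in the preceding reduction.
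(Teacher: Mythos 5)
Your proposal is correct and follows essentially the same route as the paper: decompose \( D \) into the outer region, the neck, and the bubble core, handle the neck via the pointwise decay \eqref{eq:nabla_Au_in_L_2_infnity}, the outer region via smooth convergence to the limit YMH field, and the core by rescaling with \( \lambda = r_\alpha \) and using the smooth convergence \( u_\lambda \to \omega \), \( a_\lambda \to 0 \) together with the rescaled small energy estimates of \cref{prop:small-energy-estimates-scaled} to get a uniform bound before scaling back. The paper phrases the core estimate by first extracting small energy on balls of a fixed radius \( \delta' \) around each point of \( D_R \) and then applying \cref{prop:small-energy-estimates-scaled} there, but this is the same mechanism you invoke, so the two arguments coincide in substance.
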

\begin{proof}
  Clearly, it suffices to prove \eqref{eq:nabla-u-alpha-c0} on \( D_{r_\alpha R} \) and \( D \setminus D_\delta \). 
  First, observe that there exists a constant \( \delta' > 0 \) (independent of \( \alpha \)) such that, for \( \lambda=r_\alpha \),
  \[
    \lVert \nabla_{A_\lambda} u_\lambda \rVert_{L^2(D(y,2\delta'))} < \epsilon_0
    \quad \text{and} \quad
    \lVert F_{A_\lambda} \rVert_{L^2(D(y,2\delta'))} < \delta_0
  \]
  for any \( y \in D_R \). This follows since, for any \( R > 0 \) and \( y \in D_R \), we have \( u_\lambda(x) = u_\alpha(r_\alpha x) \to \omega(x) \) in \( C^\infty(D_R) \) and \( a_\lambda(x) = r_\alpha a_\alpha(r_\alpha x) \to 0 \) in \( C^\infty(D_R) \) (because we may assume that \( A_\lambda \) is in Coulomb gauge, so \( \lVert a_\lambda \rVert_{W^{1,2}(D_R)} \leq C \lVert F_{A_\lambda} \rVert_{L^2(D_R)} = C r_\alpha \lVert F_A \rVert_{L^2(D_{r_\alpha R})} \to 0 \)). Applying \cref{prop:small-energy-estimates-scaled}, we see that for \( \alpha - 1 \) sufficiently small,
  \begin{equation}\label{eq:nabla-A-u-lambda-C0-bounded}
    \lvert \nabla _{A_\lambda}u_\lambda \rvert(y)\leq |a_\lambda(y)| + |\nabla u_\lambda(y)| \leq \frac{C}{\delta'}(\epsilon_0 + \delta_0) \leq C.
  \end{equation}
  Scaling back, we obtain that for any \( x \in D_{r_\alpha R} \),
  \[
    |a_\alpha(x)| + |\nabla u_\alpha(x)| = r_\alpha^{-1}\left( |a_\lambda(r_\alpha^{-1}x)| + |\nabla u_\lambda(r_\alpha^{-1} x)| \right) \leq C r_\alpha^{-1}.
  \]

  Similarly, since \( u_\alpha \to u_\infty \) and \( a_\alpha \to a_\infty \) smoothly on \( D \setminus D_\delta \), it follows from \cref{prop:small-energy-estimate} that, for \( \alpha - 1 \) sufficiently small,
  \[
    |a_\alpha| + |\nabla u_\alpha| \leq C \delta^{-1}.
  \]
  Combining these two cases, we conclude that the claim holds.
\end{proof}
\begin{lem}\label{lem:f-alpha-C0}
  There exists a constant \( \Lambda' > 0 \), depending only on \( \Lambda \) and \( \epsilon_0 \), such that when \( \alpha-1 \) is sufficiently small,
  \begin{equation}\label{eq:f-alpha-C0}
    \lVert f_\alpha \rVert_{C^0(D)} \leq \Lambda'.
  \end{equation}
\end{lem}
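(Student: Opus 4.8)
The plan is to turn the pointwise gradient bound \eqref{eq:nabla-u-alpha-c0} into a pointwise bound on \( f_\alpha \), and then to absorb the resulting factor of \( r_\alpha^{-2(\alpha-1)} \) by converting the energy hypothesis \( \mathcal{L}_\alpha(A_\alpha,u_\alpha)\le\Lambda \) into a uniform lower bound on the concentration radius via the scaling identity \eqref{eq:scaled-energy-relation}. First I would invoke the preceding Claim: since \( \|\nabla_{A_\alpha}u_\alpha\|_{C^0(D)}\le C r_\alpha^{-1} \) with \( C \) independent of \( \alpha \), the definition of \( f_\alpha \) gives, on all of \( D \),
\[
  f_\alpha=\alpha\left(1+|\nabla_{A_\alpha}u_\alpha|^2\right)^{\alpha-1}\le\alpha\left(1+C^2 r_\alpha^{-2}\right)^{\alpha-1}.
\]
Because \( r_\alpha\to0 \), for \( \alpha-1 \) small we have \( r_\alpha^{-2}\ge1 \), hence \( 1+C^2r_\alpha^{-2}\le(1+C^2)r_\alpha^{-2} \) and
\[
  f_\alpha\le\alpha\,(1+C^2)^{\alpha-1}\,r_\alpha^{-2(\alpha-1)}.
\]
Since \( (1+C^2)^{\alpha-1}\to1 \) and \( \alpha\to1 \), it suffices to bound \( r_\alpha^{-2(\alpha-1)}=r_\alpha^{\,2-2\alpha} \) uniformly from above.

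The heart of the argument, and the step I expect to be the main obstacle, is the uniform estimate \( r_\alpha^{\,2-2\alpha}\le C' \) with \( C' \) depending only on \( \Lambda \) and \( \epsilon_0 \). To obtain it I would use that the rescaled fields \( (A_\lambda,u_\lambda) \), \( \lambda=r_\alpha \), converge in \( C^\infty_{\mathrm{loc}} \) to the nontrivial bubble \( \omega \) with \( a_\lambda\to0 \), as in \cref{prop:bubbing}. Fixing \( R \) so large that \( \int_{D_R}|d\omega|^2\ge\epsilon_0 \) (the bubble is nontrivial, and the rescaling is normalized so that a fixed fraction of the energy concentrates in \( D_R \)), the smooth convergence gives, for \( \alpha-1 \) small,
\[
  \int_{D_R}\left(r_\alpha^2+|\nabla_{A_\lambda}u_\lambda|^2\right)^\alpha dy\ge\int_{D_R}|\nabla_{A_\lambda}u_\lambda|^{2\alpha}\,dy\ge\tfrac{\epsilon_0}{2}.
\]
Changing variables \( x=r_\alpha y \) and using \( \nabla_{A_\lambda}u_\lambda(y)=r_\alpha\nabla_{A_\alpha}u_\alpha(r_\alpha y) \) yields the scaling identity
\[
  \int_{D_{r_\alpha R}}\left(1+|\nabla_{A_\alpha}u_\alpha|^2\right)^\alpha dx=r_\alpha^{\,2-2\alpha}\int_{D_R}\left(r_\alpha^2+|\nabla_{A_\lambda}u_\lambda|^2\right)^\alpha dy .
\]

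Finally I would close the loop with the energy bound. Since \( |F_{A_\alpha}|^2,|\mu(u_\alpha)|^2\ge0 \) and \( |D|=\pi \), the hypothesis \( \mathcal{L}_\alpha(A_\alpha,u_\alpha)\le\Lambda \) gives \( \tfrac12\int_D(1+|\nabla_{A_\alpha}u_\alpha|^2)^\alpha\le\Lambda+\tfrac{\pi}{2} \). Discarding all but the bubble region \( D_{r_\alpha R} \) and inserting the previous two displays,
\[
  \Lambda+\tfrac{\pi}{2}\ge\tfrac12\,r_\alpha^{\,2-2\alpha}\int_{D_R}\left(r_\alpha^2+|\nabla_{A_\lambda}u_\lambda|^2\right)^\alpha dy\ge\tfrac{\epsilon_0}{4}\,r_\alpha^{\,2-2\alpha},
\]
so that \( r_\alpha^{\,2-2\alpha}\le 4(\Lambda+\pi/2)/\epsilon_0=:C' \). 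Substituting this back into the bound for \( f_\alpha \), and using \( \alpha<2 \) with \( (1+C^2)^{\alpha-1}\le2 \) for \( \alpha-1 \) small, gives \( \|f_\alpha\|_{C^0(D)}\le\Lambda' \) with \( \Lambda' \) depending only on \( \Lambda \) and \( \epsilon_0 \), as asserted. The only delicate points are making the lower bound \( \int_{D_R}(\cdots)^\alpha\ge\epsilon_0/2 \) uniform in \( \alpha \) — which follows from the \( C^\infty_{\mathrm{loc}} \) convergence \( \nabla_{A_\lambda}u_\lambda\to d\omega \) together with the exponent converging to \( 1 \) — and checking that the constant in \eqref{eq:nabla-u-alpha-c0} is \( \alpha \)-independent; both are already secured above.
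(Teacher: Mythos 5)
Your proposal is correct and follows essentially the same argument as the paper: both reduce the lemma to the uniform bound \( r_\alpha^{2-2\alpha}\leq C(\Lambda,\epsilon_0) \), obtained by combining the energy hypothesis, the scaling identity for the rescaled fields \( (A_\lambda,u_\lambda) \), and the lower bound on the bubble's energy, and then conclude via the pointwise estimate \( \lVert \nabla_{A_\alpha}u_\alpha\rVert_{C^0(D)}\leq Cr_\alpha^{-1} \). The only cosmetic difference is that the paper uses the superadditivity inequality \( t^{2\alpha}\leq (1+t^2)^\alpha-1 \) to bound \( \int_{D_R}\lvert\nabla_{A_\lambda}u_\lambda\rvert^{2\alpha} \) directly by \( Cr_\alpha^{2\alpha-2}\Lambda \), whereas you keep the full integrand \( (1+\lvert\nabla_{A_\alpha}u_\alpha\rvert^2)^\alpha \) and absorb the constant \( -1 \) term into \( \Lambda+\pi/2 \); both routes are valid.
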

\begin{proof}
  Note that for any \( R > 0 \), we have
  \[
    \int_{D_R}\lvert \nabla_{ A_\lambda}  u_\lambda(x) \rvert^{2\alpha}
    =r_\alpha^{2\alpha-2}\int_{D_{r_\alpha R}}\lvert \nabla_{A_\alpha}  u_\alpha(x) \rvert^{2\alpha}
    \leq Cr_\alpha^{2\alpha-2}\int_{D_{r_\alpha R}} \left( \left( 1+\lvert \nabla _{A_\alpha}u_\alpha \rvert^2 \right)^{\alpha} -1 \right)
    \leq C r_\alpha^{2\alpha-2} \Lambda.
  \]
  On the other hand, \eqref{eq:nabla-A-u-lambda-C0-bounded} implies that
  \[
    \lim_{\alpha \to 1} \lVert \nabla_{A_\lambda} u_\lambda \rVert_{C^0(\mathbb{R}^2)} < +\infty,
  \]
  and
  \[
  \lim_{R\to\infty}\lim_{\alpha\to1}\int_{D_R}\lvert \nabla_{ A_\lambda}   u_\lambda(x) \rvert^{2\alpha}dx=\lim_{R\to\infty}\int_{D_R}\lvert \nabla \omega \rvert^2\geq 2\epsilon_0^2.
  \]
  In conclusion, we obtain
  \[
    \epsilon_0^2 \leq C \Lambda \liminf_{\alpha \to 1} r_\alpha^{2\alpha-2},
  \]
  that is, for \( 1 < \alpha < \alpha_0 \),
  \begin{equation}\label{eq:r-alpha-2-2alpha-unifom-bounded}
    r_\alpha^{2-2\alpha} \leq \frac{C \Lambda}{\epsilon_0^2}.
  \end{equation}
  Combining this with \eqref{eq:nabla-u-alpha-c0}, we deduce
  \[
    \lVert f_\alpha \rVert_{C^0(D)} \leq \alpha (1 + \lVert \nabla_{A_\alpha} u_\alpha \rVert_{C^0(D)}^2)^{\alpha-1}
    \leq C r_\alpha^{2-2\alpha} \leq \frac{C \Lambda}{\epsilon_0^2} =: \Lambda'.
  \]
\end{proof}

Now, we turn to the study of the energy identity. On the strong convergence region \( D \setminus D_\delta \), since each point admits a small neighborhood where the small energy condition is satisfied and there is no energy concentration, the small energy estimates (\cref{prop:small-energy-estimate}) imply that \( u_\alpha \to u_\infty \) in \( C^\infty(D \setminus D_\delta) \) and \( A_\alpha \to A_\infty \) in \( C^\infty(D \setminus D_\delta) \). Consequently,
\begin{equation}\label{eq:EI-D-D-delta}
  \lim_{\alpha\to1} \mathcal{L}_\alpha(A_\alpha, u_\alpha; D \setminus D_\delta)
  =\frac{1}{2}\int_{D\setminus D_\delta}\lvert \nabla _{A_\infty}u_\infty \rvert^2 +\lvert F_A \rvert^2+\lvert \mu(u) \rvert^2
  = \mathcal{L}(A_\infty, u_\infty; D \setminus D_\delta).
\end{equation} 

To analyze the energy convergence in the bubbling region \( D_{r_\alpha R} \), we first note the different behavior of the map \( u \) and the connection 1-form \( a \). Unlike the map \( u \), which satisfies the small energy condition only on annular neck regions of fixed ratios, the connection 1-form \( a \) scales in a favorable manner, allowing the small energy condition to hold over the entire small disc. Specifically, choose \( \lambda = \delta_1 \) for some \( \delta_1 > 0 \) depending only on an upper bound for \( \lVert F_{A_\alpha} \rVert_{L^2(\Sigma)} \). If \( \delta_1 \) is sufficiently small, we have \( \lVert F_{A_\lambda} \rVert_{L^2(D_2)} = \lambda \lVert F_A \rVert_{L^2(D_{2\delta_1})} < \delta_0 \). Then, by the small energy estimates (\cref{prop:small-energy-estimates-scaled}),
\[
  \lVert a_\lambda \rVert_{L^\infty(D_1)} + \lVert \nabla a_\lambda \rVert_{L^8(D_1)}
  \leq C \lVert a_\lambda \rVert_{W^{2,8/5}(D_1)}
  \leq C \left( \lVert \nabla_{A_\lambda} u_\lambda \rVert_{L^2(D_2)} + \lVert F_{A_\lambda} \rVert_{L^2(D_2)} \right).
\]
Scaling back, for any \( \delta < \delta_1 \),
\[
  \lVert a \rVert_{L^\infty(D_\delta)}
  \leq \lVert a \rVert_{L^\infty(D_{\delta_1})}
  \leq \frac{C}{\delta_1} \left( \lVert \nabla_A u \rVert_{L^2(D_{2\delta_1})} + \lVert F_A \rVert_{L^2(D_{2\delta_1})} \right),
\]
and
\[
  \lVert \nabla a \rVert_{L^8(D_\delta)}
  \leq \frac{C}{\delta_1^{7/4}} \left( \lVert \nabla_A u \rVert_{L^2(D_{2\delta_1})} + \lVert F_A \rVert_{L^2(D_{2\delta_1})} \right).
\]
In particular, this implies
\[
  \lVert F_A \rVert_{L^8(D_\delta)}
  \leq C \lVert \lvert\nabla a\rvert + \lvert a\rvert^2 \rVert_{L^8(D_\delta)}
  \leq \frac{C}{\delta_1^{7/4}} \left( \lVert \nabla_A u \rVert_{L^2(D_{2\delta_1})} + \lVert F_A \rVert_{L^2(D_{2\delta_1})} \right).
\]
Therefore, by H\"older's inequality,
\begin{equation}\label{eq:convergence-of-rescaled-FA}
  r_\alpha^{-2} \int_{D_R} \lvert F_{A_\lambda}\rvert^2 = \int_{D_{r_\alpha R}} \lvert F_A\rvert^2 \leq C \lVert F_A \rVert_{L^8(D_\delta)}^2 r_\alpha R\sqrt{r_\alpha R}.
\end{equation}

\begin{rmk}\label{rmk:a_nabla_a_L_infinity}
  With the help of \( \lVert f_\alpha \rVert_{C^0(D)} \leq \Lambda' \), we can estimate \( \lVert \nabla a \rVert_{L^\infty(D)} \). By the existence of local Coulomb gauge (see \cite{Uhlenbeck1982Connections}*{Thm.~1.3}), up to a choice of gauge, we have
  \[
    \lVert a \rVert_{W^{1,p}(D_{2\delta_1})} \leq C \lVert F_A \rVert_{L^2(D_{2\delta_1})}, \quad \forall\, p \geq 1.
  \]
  In particular,
  \[
    \lVert \nabla a \rVert_{L^{2\alpha}(D_{2\delta_1})} \leq C \lVert F_A \rVert_{L^2(D_{2\delta_1})}.
  \]
  Then, the \( L^p \) estimates applied to the equation for \( a \) (see \eqref{eq:alpha-YMH-local}) yield
  \[
    \lVert a \rVert_{W^{2,2\alpha}(D_{\delta_1})} \leq C \left( \lVert f_\alpha \rVert_{L^\infty(D_{2\delta_1})} \lVert \nabla_A u \rVert_{L^{2\alpha}(D_{2\delta_1})} + \lVert \nabla a \rVert_{L^{2\alpha}(D_{2\delta_1})} \lVert a \rVert_{L^\infty(D_{2\delta_1})} + \lVert a \rVert_{L^\infty(D_{2\delta_1})}^3 \right),
  \]
  so that Sobolev's embedding theorem implies
  \[
    \lVert \nabla a \rVert_{L^\infty(D_{\delta_1})} \leq C(\Lambda', \Lambda, \delta_1)=C(\epsilon_0,\Lambda,\delta_1).
  \]
  By a covering argument,
  \[
    \lVert a \rVert_{L^\infty(D)} + \lVert \nabla a \rVert_{L^\infty(D)} + \lVert \nabla^2 a \rVert_{L^{2\alpha}(D)} \leq C(\epsilon_0,\Lambda,\delta_1).
  \]
\end{rmk}

Now, on the bubbling region \( D_{r_\alpha R} \), recall that \( u_\lambda \to \omega \) in \( C^\infty(D_R) \) and \( a_\lambda \to 0 \) in \( C^\infty(D_R) \), and note that \( \mathcal{L}_\alpha = \mathcal{L}_{\alpha,1} \). By \eqref{eq:scaled-energy-relation}, for \( \lambda = r_\alpha \), we have
\begin{align*}
  &\lim_{\alpha \to 1} \mathcal{L}_\alpha(A_\alpha, u_\alpha; D_{r_\alpha R}) \\
      &\qquad=\lim_{\alpha\to1}\frac{1}{2}r_\alpha^{2-2\alpha}\int_{D_R}\left( r_\alpha^2+\lvert \nabla _{A_{\lambda}}u_\lambda \rvert \right)^\alpha+r_\alpha^{2\alpha-4}\lvert F_{A_\lambda} \rvert^2+r_\alpha^{2\alpha}\left( \lvert \mu(u_\lambda) \rvert^2-1 \right) \\
      &\qquad=\lim_{\alpha\to1}\frac{1}{2}r_\alpha^{2-2\alpha}\int_{D_R}\lvert d\omega \rvert^2+\lim_{\alpha\to1}r_\alpha^{-2}\int_{D_R}\lvert F_{A_\lambda} \rvert^2
      =\bar\mu E(\omega;D_R),
\end{align*}
where we have used \eqref{eq:convergence-of-rescaled-FA} and the fact that \( r_\alpha^{2-2\alpha} \geq 1 \) is bounded above, see \eqref{eq:r-alpha-2-2alpha-unifom-bounded}; here, we take a subsequence of \( (A_\alpha, u_\alpha) \) such that \( \lim_{\alpha \to 1} r_\alpha^{2-2\alpha} = \bar\mu \). Therefore,
\begin{equation}\label{eq:EI-D-r-alpha-R}
  \lim_{\alpha \to 1} \mathcal{L}_\alpha(A_\alpha, u_\alpha; D_{r_\alpha R}) = \bar\mu E(\omega; D_R).
\end{equation}

Thus, by first letting \( R \to +\infty \) and then \( \delta \to 0 \) in \eqref{eq:EI-D-D-delta} and \eqref{eq:EI-D-r-alpha-R}, we obtain:
\begin{align*}
  &\lim_{\alpha \to 1} \mathcal{L}_\alpha(A_\alpha, u_\alpha; D) \\
  &\quad = \lim_{\delta \to 0} \lim_{R \to +\infty} \lim_{\alpha \to 1} \mathcal{L}_\alpha\left(A_\alpha, u_\alpha; \left(D \setminus D_\delta\right) \cup \left(T_{\delta,r_\alpha R}\right) \cup D_{r_\alpha R}\right) \\
  &\quad = \mathcal{L}(A_\infty, u_\infty; D) + \bar\mu E(\omega)
      + \lim_{\delta \to 0} \lim_{R \to +\infty} \lim_{\alpha \to 1} \mathcal{L}_\alpha(A_\alpha, u_\alpha; T_{\delta,r_\alpha R}).
\end{align*}
Moreover, note that
\[
  \lim_{\delta\to0}\lim_{\alpha\to1}\frac{1}{2}\int_{D_\delta}\left( \lvert F_{A_\alpha} \rvert^2+\lvert \mu(u) \rvert^2-1 \right) dv_g=0,
\]
which can be proved by \cref{rmk:a_nabla_a_L_infinity}. We summarize the above convergence of energy on \( D \) in the following form.
\begin{prop}\label{prop:equivalence-of-EI}
  Under the assumptions of \cref{thm:main}, the energy identity \eqref{eq:alpha-EI} in \cref{thm:main} for the \( \alpha \)-energy of \( \alpha \)-YMH fields is equivalent to the condition \( \bar\mu=1 \) and
  \begin{equation}\label{eq:equivalence-of-EI}
    \lim_{\delta \to 0} \lim_{R \to \infty} \lim_{\alpha \to 1} \frac{1}{2} \int_{T_{\delta,r_\alpha R}} \left( 1 + \lvert\nabla_{A_\alpha} u_\alpha\rvert^2 \right)^\alpha \, dv_g = 0.
  \end{equation}
\end{prop}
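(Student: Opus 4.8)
The plan is to read off the equivalence from the energy decomposition already assembled just before the statement, by isolating the neck contribution and then exploiting the definite sign of each surviving term. First I would record the decomposition: partitioning $D$ into $D\setminus D_\delta$, the neck $T_{\delta,r_\alpha R}$, and the bubbling region $D_{r_\alpha R}$, and combining \eqref{eq:EI-D-D-delta} with \eqref{eq:EI-D-r-alpha-R}, one obtains
\[
  \lim_{\alpha\to1}\mathcal{L}_\alpha(A_\alpha,u_\alpha;D)
  =\mathcal{L}(A_\infty,u_\infty;D)+\bar\mu\,E(\omega)+\mathcal{N},
  \qquad
  \mathcal{N}:=\lim_{\delta\to0}\lim_{R\to\infty}\lim_{\alpha\to1}\mathcal{L}_\alpha(A_\alpha,u_\alpha;T_{\delta,r_\alpha R}).
\]

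Second, I would reconcile the neck energy $\mathcal{N}$ with the purely $\alpha$-gradient integral in \eqref{eq:equivalence-of-EI}. Expanding the functional on the neck and separating terms gives
\[
  \mathcal{L}_\alpha(A_\alpha,u_\alpha;T_{\delta,r_\alpha R})
  =\frac12\int_{T_{\delta,r_\alpha R}}\bigl(1+|\nabla_{A_\alpha}u_\alpha|^2\bigr)^\alpha
  +\frac12\int_{T_{\delta,r_\alpha R}}\bigl(|F_{A_\alpha}|^2+|\mu(u_\alpha)|^2-1\bigr).
\]
Since $T_{\delta,r_\alpha R}\subset D_\delta$, the curvature--potential part $\tfrac12\int_{T}(|F_{A_\alpha}|^2+|\mu(u_\alpha)|^2)$ is dominated by $\tfrac12\int_{D_\delta}(|F_{A_\alpha}|^2+|\mu(u_\alpha)|^2)$, which vanishes in the limit $\lim_{\delta\to0}\lim_{\alpha\to1}$ upon combining the estimate recorded after \eqref{eq:EI-D-r-alpha-R} (itself a consequence of \cref{rmk:a_nabla_a_L_infinity}) with $|D_\delta|\to0$; together with $\tfrac12|T_{\delta,r_\alpha R}|\to0$ this shows the whole remainder tends to $0$. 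Hence $\mathcal{N}$ equals the left-hand side of \eqref{eq:equivalence-of-EI}.

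Third, I would invoke the signs. Under the single-bubble reduction the energy identity \eqref{eq:alpha-EI} reads $\lim_{\alpha\to1}\mathcal{L}_\alpha(A_\alpha,u_\alpha;D)=\mathcal{L}(A_\infty,u_\infty;D)+E(\omega)$; subtracting this from the decomposition shows that \eqref{eq:alpha-EI} holds if and only if
\[
  (\bar\mu-1)\,E(\omega)+\mathcal{N}=0.
\]
Here $\bar\mu-1\geq0$, since $r_\alpha^{2-2\alpha}\geq1$; the density $(1+|\nabla_{A_\alpha}u_\alpha|^2)^\alpha$ is nonnegative, so $\mathcal{N}\geq0$; and $E(\omega)>0$ because $\omega$ is a nontrivial harmonic sphere from \cref{prop:bubbing}. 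A sum of two nonnegative quantities vanishes iff both vanish, and since $E(\omega)>0$ this forces $\bar\mu=1$ together with $\mathcal{N}=0$, i.e.\ \eqref{eq:equivalence-of-EI}. Conversely, $\bar\mu=1$ and \eqref{eq:equivalence-of-EI} give $(\bar\mu-1)E(\omega)+\mathcal{N}=0$ and hence \eqref{eq:alpha-EI}, completing the equivalence.

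No genuine analytic difficulty arises at this stage: the substance lives in the inputs gathered earlier---the region-by-region limits \eqref{eq:EI-D-D-delta} and \eqref{eq:EI-D-r-alpha-R}, the curvature--potential decay on $D_\delta$, and the bound $r_\alpha^{2-2\alpha}\leq C\Lambda/\epsilon_0^2$. The only point demanding care is the reconciliation in the second step, where one must absorb both the shrinking area term $\tfrac12|T_{\delta,r_\alpha R}|\to0$ and the curvature--potential remainder so that the full functional defining $\mathcal{N}$ matches the isolated $\alpha$-gradient density of \eqref{eq:equivalence-of-EI}; beyond that the argument is a sign accounting, provided the triple limit is always taken in the prescribed order $\lim_{\delta\to0}\lim_{R\to\infty}\lim_{\alpha\to1}$.
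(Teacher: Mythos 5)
Your proposal is correct and follows essentially the same route as the paper: the paper's own ``proof'' consists precisely of the region-by-region decomposition, the limits \eqref{eq:EI-D-D-delta} and \eqref{eq:EI-D-r-alpha-R}, and the vanishing of the curvature--potential remainder $\frac12\int_{D_\delta}\left(\lvert F_{A_\alpha}\rvert^2+\lvert\mu(u_\alpha)\rvert^2-1\right)$ via \cref{rmk:a_nabla_a_L_infinity}, after which the proposition is stated as a summary of that computation. Your explicit sign accounting ($\bar\mu-1\geq 0$ from $r_\alpha^{2-2\alpha}\geq 1$, nonnegativity of the neck term, and $E(\omega)>0$ for a nontrivial bubble) simply spells out the equivalence step that the paper leaves implicit, and it is the right justification.
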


To prove the above proposition, we first establish the following weaker version of the energy identity, which asserts that the \( E(A_\alpha, u_\alpha; D) \)-energy identity holds, rather than the full \( \mathcal{L}_\alpha \)-energy.

\begin{lem}\label{lem:EI-E-A-u-D}
  Under the assumptions of \cref{thm:main}, for any \( \epsilon > 0 \), there exist \( \alpha_0 > 1 \), \( R_0 > 0 \), and \( \delta_1 > 0 \) such that for all \( 1 < \alpha \leq \alpha_0 \), \( R > R_0 \), and \( \delta < \delta_1 \), we have
  \[
    E(A_\alpha, u_\alpha; T_{\delta,r_\alpha R}) = \frac{1}{2} \int_{T_{\delta,r_\alpha R}} \lvert\nabla_{A_\alpha} u_\alpha\rvert^2 \leq C \epsilon^2,
  \]
  that is,
  \[
    \lim_{\delta \to 0} \lim_{R \to +\infty} \lim_{\alpha \to 1} E(A_\alpha, u_\alpha; T_{\delta,r_\alpha R}) = 0.
  \]
\end{lem}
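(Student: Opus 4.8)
The plan is to exploit the hidden Jacobian structure encoded in the conservation law and to convert the weak \( L^{2,\infty} \) smallness of \( \nabla_{A_\alpha}u_\alpha \) on the neck into genuine \( L^2 \) smallness via a Wente-type inequality. First I would record the localized equation for \( u_\alpha \) in skew-divergence form: from \eqref{eq:alpha-YMH-local} one has
\[
\div(f_\alpha\nabla_{A_\alpha} u_\alpha^i)=\Omega^{ij}\nabla_{A_\alpha} u_\alpha^j-\mu(u_\alpha)\nabla_i\mu(u_\alpha),\qquad \Omega^{ij}=f_\alpha\bigl(\nabla_{A_\alpha} u_\alpha^i u_\alpha^j-\nabla_{A_\alpha} u_\alpha^j u_\alpha^i-a^{ij}\bigr).
\]
By \cref{cor:conservation-law-component} the combination \( \tilde\Omega^{ij}:=f_\alpha(\nabla_{A_\alpha} u_\alpha^i u_\alpha^j-\nabla_{A_\alpha} u_\alpha^j u_\alpha^i)+2\operatorname{tr}[F_{A_\alpha},a]^i_j \) is divergence free, so since \( D \) is simply connected the Hodge/Poincaré lemma produces functions \( B^{ij} \) with \( \tilde\Omega^{ij}=\nabla^\perp B^{ij} \). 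Substituting yields the Jacobian form
\[
\div(f_\alpha\nabla_{A_\alpha} u_\alpha^i)=\nabla^\perp B^{ij}\cdot\nabla_{A_\alpha} u_\alpha^j-\bigl(2\operatorname{tr}[F_{A_\alpha},a]^i_j+f_\alpha a^{ij}\bigr)\nabla_{A_\alpha} u_\alpha^j-\mu(u_\alpha)\nabla_i\mu(u_\alpha),
\]
whose leading nonlinearity is a genuine Jacobian determinant.

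Next I would estimate \( \nabla B \) on the neck. Since \( |u_\alpha|=1 \), \( \lVert f_\alpha\rVert_{C^0(D)}\leq\Lambda' \) by \cref{lem:f-alpha-C0}, and both \( \nabla_{A_\alpha}u_\alpha \) and \( a_\alpha \) satisfy the pointwise bound \( \leq C\epsilon/|x| \) on \( T_{\delta,r_\alpha R} \) coming from \eqref{eq:nabla_Au_in_L_2_infnity} (hence are bounded by \( C\epsilon \) in \( L^{2,\infty} \), cf.\ \eqref{eq:L-2-infty-u}), together with the curvature bounds of \cref{rmk:a_nabla_a_L_infinity}, one obtains \( \lVert\nabla B\rVert_{L^{2,\infty}(T_{\delta,r_\alpha R})}\leq C\epsilon \). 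The strategy is then to decompose the neck into dyadic annuli of fixed conformal modulus, rescale each to a standard annulus where \cref{prop:small-energy-estimates-scaled} applies uniformly, and on each piece run the following absorption: solving the auxiliary Dirichlet problems with right-hand sides \( \nabla^\perp B^{ij}\cdot\nabla u^j \) and the remaining divergence and lower-order terms, the Bethuel–Hélein–Wente inequality (\cref{lem:Wente-Bethuel-Helein}) bounds the \( L^2 \) energy of the Jacobian contribution by \( C\lVert\nabla B\rVert_{L^{2,\infty}}\lVert\nabla_{A_\alpha}u_\alpha\rVert_{L^2}\leq C\epsilon\,\lVert\nabla_{A_\alpha}u_\alpha\rVert_{L^2} \), while the connection, curvature and potential terms are handled by the Lorentz elliptic estimates (\cref{lem:div-elliptic-estimate,lem:elliptic-estimate}) using the strong \( L^\infty \) and \( L^{2\alpha} \) bounds on \( a \) and \( F_{A_\alpha} \) from \cref{rmk:a_nabla_a_L_infinity}. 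Summing over the dyadic scales and absorbing the factor \( C\epsilon \) into the left-hand side yields \( E(A_\alpha,u_\alpha;T_{\delta,r_\alpha R})\leq C\epsilon^2 \), and letting \( \alpha\to1 \), \( R\to\infty \), \( \delta\to0 \) gives the stated vanishing.

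The main obstacle is precisely that the smallness must come from the neck energy \( \epsilon \) and not from \( \alpha-1 \): the coefficient \( f_\alpha-1 \) is only controlled by \( \Lambda'-1 \), which is not small before the Pohozaev refinement, so a crude \( L^2 \) estimate of the error terms bounds them only by \( (\Lambda'-1)\lVert\nabla_{A_\alpha}u_\alpha\rVert_{L^2} \) and cannot be absorbed. Two points then require care. First, the lower-order terms \( f_\alpha a\cdot\nabla_{A_\alpha}u_\alpha \) and \( \operatorname{tr}[F_{A_\alpha},a]\cdot\nabla_{A_\alpha}u_\alpha \) are a priori only logarithmically integrable over the neck (they scale like \( \epsilon^2/|x|^2 \)), so one cannot bound them naively; instead one must use their Lorentz-space smallness together with the \( L^{2,1} \)–\( L^{2,\infty} \) duality so that the logarithm is beaten and only an \( O(\epsilon^2) \) contribution survives. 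Second, \cref{lem:Wente-Bethuel-Helein} is stated for the Dirichlet problem on a disc, so transplanting it to the annular neck forces one either to excise the low-frequency (harmonic) part of \( u_\alpha \)—which the Jacobian structure does not directly see—or to perform the dyadic decomposition with a summation that does not lose a constant growing with the number of scales. Controlling this low-frequency part and ensuring the per-scale constants sum is the technical heart of the argument.
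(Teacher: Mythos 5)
Your skeleton (conservation law \( \Rightarrow \) potential, \( L^{2,\infty} \) smallness of that potential on the neck, a Wente/Bethuel--H\'elein estimate converting weak smallness into \( L^2 \) smallness) is indeed the paper's skeleton, but the two points you defer as ``the technical heart'' are precisely the content of the lemma, and neither is resolved by what you wrote. The first gap is the weight \( f_\alpha \). You note that \( f_\alpha-1 \) is only bounded by \( \Lambda'-1 \) and ``cannot be absorbed,'' but your concluding chain --- auxiliary Dirichlet problems, a Wente bound \( C\epsilon\lVert\nabla_{A_\alpha}u_\alpha\rVert_{L^2} \), then ``absorbing \( C\epsilon \) into the left-hand side'' --- tacitly treats the equation as \( \Delta u_\alpha=\nabla^\perp B\cdot\nabla_{A_\alpha}u_\alpha+\cdots \), when it is \( \div(f_\alpha\nabla_{A_\alpha}u_\alpha)=\cdots \); comparing \( u_\alpha \) with the Dirichlet solutions reintroduces \( \div\left((f_\alpha-1)\nabla_{A_\alpha}u_\alpha\right) \) with no smallness, exactly the term you identified as fatal. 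The paper's fix is a specific mechanism absent from your proposal: after cutting off to get \( \tilde u_\alpha \), it takes the Hodge decomposition \( f_\alpha\nabla_{A_\alpha}\tilde u_\alpha=\nabla D_\alpha+\nabla^\perp Q_\alpha \) \eqref{eq:Hodge-decomposition}, observes \( \Delta Q_\alpha=-\div\left((f_\alpha-1)\nabla_{A_\alpha}^\perp\tilde u_\alpha\right) \), and invokes \cref{lem:div-elliptic-estimate} with its sharp constant \( C_2=1 \), giving \( \lVert\nabla Q_\alpha\rVert_{L^2}\le(1-1/\Lambda')\lVert f_\alpha\nabla_{A_\alpha}\tilde u_\alpha\rVert_{L^2} \); since \( 1-1/\Lambda' \) is strictly less than \( 1 \) (though not small), this term is absorbed, yielding \( \lVert f_\alpha\nabla_{A_\alpha}\tilde u_\alpha\rVert_{L^2}\le\Lambda'\lVert\nabla D_\alpha\rVert_{L^2} \) \eqref{eq:f-nabla-u-by-nabla-D}. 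Only \( D_\alpha \) then needs the Jacobian structure, the smallness enters linearly through \( \lVert\nabla\tilde G_{\alpha,1}\rVert_{L^{2,\infty}}\le C\epsilon \), and no quadratic self-absorption in \( \epsilon \) is ever required.

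The second gap is the domain. You correctly observe that Wente-type inequalities need vanishing boundary data and that on an annulus a harmonic part escapes the Jacobian structure, but you then offer two unproved alternatives (a dyadic summation ``that does not lose a constant,'' or excising the low-frequency part); the dyadic route is exactly where a factor of order the number of scales, \( \log(\delta/(r_\alpha R))\to\infty \), threatens to appear, and you give no argument preventing it. The paper sidesteps the issue entirely: it builds globally supported objects by cutting off \emph{and subtracting mean values} on the transition annuli --- \( \tilde u_\alpha \) with \( \bar u_\alpha^1,\bar u_\alpha^2 \), and likewise \( \tilde G_{\alpha,1} \) --- so that the auxiliary problems \eqref{eq:Phi_1}--\eqref{eq:Phi_2} are posed on all of \( D \) with zero boundary data, no harmonic remainder exists, and the cutoff errors are controlled by the oscillation estimates \eqref{eq:mean-value-u1}--\eqref{eq:mean-value-u2}. (Also, the lower-order terms involving \( a \), \( F_{A_\alpha} \) and \( \mu \) are handled not by the \( L^{2,1} \)--\( L^{2,\infty} \) duality you invoke, but by the elementary \( L^4 \)-plus-H\"older step \eqref{eq:nabla-Phi_2-L2}, which gains \( \sqrt\delta \) from the smallness of the support.) Since both deferred points are where the actual work lies, the proposal as written is incomplete.
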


\begin{proof}
  The proof is rather lengthy, so we divide it into several steps.

  \step[The Hodge decomposition] We begin by constructing a function \( \tilde u_\alpha \) supported in \( B_{2\delta} \) such that 
  \[
    \tilde u_\alpha|_{D_{r_\alpha R}} = \bar{u}_\alpha^2 - \bar{u}_\alpha^1,
  \]
  where
  \[
    \bar{u}_\alpha^1 = \frac{1}{\lvert D_{2\delta} \setminus D_{\delta} \rvert} \int_{D_{2\delta} \setminus D_{\delta}} u_\alpha(x) \, dx, \quad
    \bar{u}_\alpha^2 = \frac{1}{\lvert D_{2r_\alpha R} \setminus D_{r_\alpha R} \rvert} \int_{D_{2r_\alpha R} \setminus D_{r_\alpha R}} u_\alpha(x) \, dx.
  \]
  In fact, it suffices to define
  \[
    \tilde u_\alpha(x) = \varphi_\delta(x)\left[ \left(1 - \varphi_{r_\alpha R}(x)\right)\left(u_\alpha(x) - \bar{u}_\alpha^2 \right) + \bar{u}_\alpha^2 - \bar{u}_\alpha^1 \right],
  \]
  where \( \varphi \in C_0^\infty(D_2) \) satisfies \( \varphi|_{D_1} \equiv 1 \), and we denote \( \varphi_t(x) := \varphi(x/t) \).

  Consider now the Hodge decomposition (see \cite{Helein2002Harmonic}*{Prop.~3.3.9}):
  \begin{equation}\label{eq:Hodge-decomposition}
    f_\alpha \nabla_{A_\alpha} \tilde u_\alpha = \nabla D_\alpha + \nabla^\perp Q_\alpha,
  \end{equation}
  where \( D_\alpha, Q_\alpha \in W^{1,2}_0(D) \).

  To estimate \( \lVert \nabla Q_\alpha \rVert_{L^2(D)} \), we use \eqref{eq:Hodge-decomposition} to compute:
  \[
    \Delta Q_\alpha = \curl \nabla^\perp Q_\alpha = \curl \left( f_\alpha \nabla_{A_\alpha} \tilde u_\alpha \right)
    = -\nabla(f_\alpha - 1) \cdot \nabla_{A_\alpha}^\perp \tilde u_\alpha
    = -\div \left( (f_\alpha - 1) \nabla_{A_\alpha}^\perp \tilde u_\alpha \right).
  \]
  Applying \cref{lem:div-elliptic-estimate} together with \eqref{eq:f-alpha-C0}, we obtain
  \[
    \lVert \nabla Q_\alpha \rVert_{L^2} \leq \lVert (f_\alpha - 1) \nabla_{A_\alpha}^\perp \tilde u_\alpha \rVert_{L^2}
    = \left\lVert \frac{f_\alpha - 1}{f_\alpha} \cdot f_\alpha \nabla_{A_\alpha}^\perp \tilde u_\alpha \right\rVert_{L^2}
    \leq \left(1 - \frac{1}{\Lambda'}\right) \lVert f_\alpha \nabla_{A_\alpha} \tilde u_\alpha \rVert_{L^2}.
  \]
  On the other hand, from \eqref{eq:Hodge-decomposition}, we also have
  \[
    \lVert f_\alpha \nabla_{A_\alpha} \tilde u_\alpha \rVert_{L^2(D)} \leq \lVert \nabla D_\alpha \rVert_{L^2(D)} + \lVert \nabla Q_\alpha \rVert_{L^2(D)},
  \]
  which leads to the estimate
  \begin{equation}\label{eq:f-nabla-u-by-nabla-D}
    \lVert f_\alpha \nabla_{A_\alpha} \tilde u_\alpha \rVert_{L^2(D)} \leq \Lambda' \lVert \nabla D_\alpha \rVert_{L^2(D)}.
  \end{equation}

\step[The hidden Jacobian structure] To proceed, we estimate \( \lVert \nabla D_\alpha \rVert_{L^2(D)} \). Here, the spherical structure of the fiber plays a crucial role. The method is inspired by H\'elein's approach to the regularity theory for harmonic maps into spheres (see \cite{Helein2002Harmonic}).

Observe that the covariant derivative of \( \tilde u_\alpha \) naturally decomposes into three components:
\[
  \nabla_{A_\alpha} \tilde u_\alpha(x) = \nabla \varphi_\delta(x)(u_\alpha(x) - \bar{u}_\alpha^1) - \nabla \varphi_{r_\alpha R}(x)(u_\alpha(x) - \bar{u}_\alpha^2) + \varphi_\delta(x)(1 - \varphi_{r_\alpha R}(x)) \nabla_{A_\alpha} u_\alpha(x).
\]
Therefore, we compute:
\begin{equation}\label{eq:div-f-nabla-u}
  \begin{multlined}
    \div(f_\alpha \nabla_{A_\alpha} \tilde u_\alpha) = \\
    \div\bigl(f_\alpha \varphi_\delta(1 - \varphi_{r_\alpha R}) \nabla_{A_\alpha} u_\alpha\bigr)
    + \div\bigl(f_\alpha \nabla \varphi_\delta (u_\alpha - \bar{u}_\alpha^1)\bigr)
    - \div\bigl(f_\alpha \nabla \varphi_{r_\alpha R}(u_\alpha - \bar{u}_\alpha^2)\bigr).
  \end{multlined}
\end{equation}

We now estimate the first term. When the fiber is the standard sphere, then \( \lvert u_\alpha \rvert \equiv 1 \), so that
\[
  \sum_{j=1}^K u_\alpha^j \nabla_{A_\alpha} u_\alpha^j = 0 = \sum_{j=1}^K u_\alpha^j \nabla u_\alpha^j,
\]
where we abuse the nation by using \( \nabla_{A_\alpha} u_\alpha^j \) to denotes the \( j \)-th component of the covariant derivative \( \nabla_{A_\alpha} u_\alpha \). Moreover, by the Euler--Lagrange equation \eqref{eq:alpha-YMH-local}, we have
\begin{equation}\label{eq:rewrite-EL}
  \begin{split}
    \div(f_\alpha \nabla_{A_\alpha} u_\alpha^i)
    &= -f_\alpha \sum_{j=1}^K u_\alpha^i \lvert \nabla_{A_\alpha} u_\alpha^j \rvert^2 - \Phi_\alpha^i(A_\alpha, u_\alpha) \\
    &= \sum_{j=1}^K f_\alpha \left( \nabla_{A_\alpha} u_\alpha^i u_\alpha^j - \nabla_{A_\alpha} u_\alpha^j u_\alpha^i \right) \nabla_{A_\alpha} u_\alpha^j - \Phi_\alpha^i(A_\alpha, u_\alpha),
  \end{split}
\end{equation}
where \( \Phi_\alpha(A_\alpha, u_\alpha) = \bigl(\Phi_\alpha^1(A_\alpha, u_\alpha), \ldots, \Phi_\alpha^K(A_\alpha, u_\alpha)\bigr) \).
Applying \eqref{eq:rewrite-EL}, we compute:
\begin{align*}
  &\div\left( f_\alpha \varphi_\delta(1 - \varphi_{r_\alpha R}) \nabla_{A_\alpha} u_\alpha^i \right) \\
  &\quad = \varphi_\delta(1 - \varphi_{r_\alpha R}) \div(f_\alpha \nabla_{A_\alpha} u_\alpha^i)
    + f_\alpha \nabla_{A_\alpha} u_\alpha^i \cdot \nabla(\varphi_\delta(1 - \varphi_{r_\alpha R})) \\
  &\quad = \varphi_\delta(1 - \varphi_{r_\alpha R}) \sum_{j=1}^K f_\alpha \left( \nabla_{A_\alpha} u_\alpha^i u_\alpha^j - \nabla_{A_\alpha} u_\alpha^j u_\alpha^i \right) \nabla_{A_\alpha} u_\alpha^j \\
  &\qquad + f_\alpha \nabla_{A_\alpha} u_\alpha^i \cdot \nabla(\varphi_\delta(1 - \varphi_{r_\alpha R})) - \varphi_\delta(1 - \varphi_{r_\alpha R}) \Phi_\alpha^i(A_\alpha, u_\alpha).
\end{align*}
A direct computation reveals further cancellation:
\begin{align*}
  &\div\left( f_\alpha \varphi_\delta(1 - \varphi_{r_\alpha R}) \nabla_{A_\alpha} u_\alpha^i \right) \\
  &\quad = \sum_{j=1}^K f_\alpha \left( \nabla_{A_\alpha} u_\alpha^i u_\alpha^j - \nabla_{A_\alpha} u_\alpha^j u_\alpha^i \right) \nabla_{A_\alpha} \left( \varphi_\delta(1 - \varphi_{r_\alpha R}) u_\alpha^j \right) \\
  &\qquad - \sum_{j=1}^K f_\alpha \left( \nabla_{A_\alpha} u_\alpha^i u_\alpha^j - \nabla_{A_\alpha} u_\alpha^j u_\alpha^i \right) u_\alpha^j \nabla(\varphi_\delta(1 - \varphi_{r_\alpha R})) \\
  &\quad\qquad + f_\alpha \nabla_{A_\alpha} u_\alpha^i \cdot \nabla(\varphi_\delta(1 - \varphi_{r_\alpha R})) - \varphi_\delta(1 - \varphi_{r_\alpha R}) \Phi_\alpha^i(A_\alpha, u_\alpha) \\
  &\quad = \sum_{j=1}^K f_\alpha \left( \nabla_{A_\alpha} u_\alpha^i u_\alpha^j - \nabla_{A_\alpha} u_\alpha^j u_\alpha^i \right) \nabla_{A_\alpha} \left( \varphi_\delta(1 - \varphi_{r_\alpha R}) u_\alpha^j \right) \\
  &\qquad - \varphi_\delta(1 - \varphi_{r_\alpha R}) \Phi_\alpha^i(A_\alpha, u_\alpha).
\end{align*}

Moreover, by the conservation law (see \eqref{eq:conservation-law-component}),
\begin{align*}
  \div \left( f_\alpha \left( \nabla_{A_\alpha} u_\alpha^i u_\alpha^j - \nabla_{A_\alpha} u_\alpha^j u_\alpha^i \right) \right)
  = -2 \div \left( \tr \left[ F_A, a \right]^i_j \right)
  = 2 \partial_\beta \left[ F_{\gamma\beta}, a_\gamma \right]^i_j,
\end{align*}
which, unlike the cases of harmonic maps \cite{LiZhu2019Energy}*{(3.11)} or Dirac--harmonic maps \cite{LiLiuZhuZhu2021Energy}*{(3.3)}, where the term is generally zero, this term is not zero in the YMH theory, adding extra complexity.

By the \( L^2 \) Hodge decomposition (see e.g. \cite{Schwarz1995Hodge}*{Thm.~2.4.2}), we may write
\begin{equation}\label{eq:u-nabla-u-hodege-decompo}
  f_\alpha \left( \nabla_{A_\alpha} u_\alpha^i u_\alpha^j - \nabla_{A_\alpha} u_\alpha^j u_\alpha^i \right)
  = \nabla^\perp G_{\alpha,1}^{ij} + \nabla G_{\alpha,2}^{ij},
\end{equation}
with \( G_{\alpha,1}^{ij} \in W^{1,2}(D) \) and \( G_{\alpha,2}^{ij} \in W_0^{1,2}(D) \) satisfying
\begin{equation}\label{eq:G-alpha-2}
  \Delta G_{\alpha,2}^{ij} = 2 \partial_\beta [F_{\gamma\beta}, a_\gamma]^i_j = -2 \div \left( \tr [F_{A_\alpha}, a_\alpha]^i_j \right).
\end{equation}

In conclusion, combining \eqref{eq:Hodge-decomposition} with \eqref{eq:div-f-nabla-u}, we obtain:
\begin{align}\label{eq:Laplace-D}
    \Delta D_\alpha &= \div(f_\alpha \nabla_{A_\alpha} \tilde u_\alpha) \nonumber\\
    &= \nabla^\perp G_{\alpha,1} \cdot \nabla_{A_\alpha} \left( \varphi_\delta (1 - \varphi_{r_\alpha R}) u_\alpha \right)
    + \nabla G_{\alpha,2} \cdot \nabla_{A_\alpha} \left( \varphi_\delta (1 - \varphi_{r_\alpha R}) u_\alpha \right) \\
    &\qquad - \varphi_\delta (1 - \varphi_{r_\alpha R}) \Phi_\alpha(A_\alpha, u_\alpha)
    + \div \left( f_\alpha \left( \nabla \varphi_\delta (u_\alpha - \bar{u}_\alpha^1) - \nabla \varphi_{r_\alpha R} (u_\alpha - \bar{u}_\alpha^2) \right) \right).\nonumber
\end{align}
The first term in the RHS is the \emph{hidden Jocobian structure} of \( \alpha \)-YMH fields. 
\step[Estimates of \( \lVert D_\alpha \rVert_{L^2} \)] 
Next, we consider the following equations on \( D \) with vanishing boundary conditions:
\begin{align}
  \Delta\Phi_1 &= \nabla^\perp G_{\alpha,1} \nabla\left( \varphi_\delta(1 - \varphi_{r_\alpha R}) u_\alpha \right), \label{eq:Phi_1} \\
  \Delta\Phi_2 &= \begin{aligned}[t]
    & \nabla^\perp G_{\alpha,1} \cdot a\left( \varphi_\delta(1 - \varphi_{r_\alpha R}) u_\alpha \right) 
    + \nabla G_{\alpha,2} \nabla_{A_\alpha} \left( \varphi_\delta(1 - \varphi_{r_\alpha R}) u_\alpha \right) \label{eq:Phi_2} \\
     &\qquad - \varphi_\delta(1 - \varphi_{r_\alpha R}) \Phi_\alpha(A_\alpha, u_\alpha) 
    + \div\left( f_\alpha \left( \nabla \varphi_\delta (u_\alpha - \bar{u}_\alpha^1) - \nabla \varphi_{r_\alpha R} (u_\alpha - \bar{u}_\alpha^2) \right) \right). 
  \end{aligned}
\end{align}

To apply \cref{lem:Wente-Bethuel-Helein} to estimate \( \Phi_1 \), we modify \( G_{\alpha,1} \) so that it is supported in \( D \) while preserving its gradient on the neck domain \( T_{2\delta, r_\alpha R} \). Following the construction of \( \tilde u_\alpha \), define
\[
  \tilde G_{\alpha,1}(x) = \varphi_{4\delta}(x) \left[ (1 - \varphi_{r_\alpha R/2}(x))(G_{\alpha,1}(x) - \bar{G}_{\alpha,1}^2) + \bar{G}_{\alpha,1}^2 - \bar{G}_{\alpha,1}^1 \right],
\]
where
\[
  \bar{G}_{\alpha,1}^1 = \frac{1}{\lvert D_{8\delta} \setminus D_{4\delta} \rvert} \int_{D_{8\delta} \setminus D_{4\delta}} G_{\alpha,1}(x)\,dx, \quad
  \bar{G}_{\alpha,1}^2 = \frac{1}{\lvert D_{r_\alpha R} \setminus D_{r_\alpha R/2} \rvert} \int_{D_{r_\alpha R} \setminus D_{r_\alpha R/2}} G_{\alpha,1}(x)\,dx.
\]

Since \( \varphi_\delta(1 - \varphi_{r_\alpha R}) \) is supported in \( T_{2\delta, r_\alpha R} \) and \( \tilde G_{\alpha,1} = G_{\alpha,1} - \bar{G}_{\alpha,1}^1 \) on \( T_{2\delta, r_\alpha R} \), we can replace \( G_{\alpha,1} \) by \( \tilde G_{\alpha,1} \) in \eqref{eq:Laplace-D}, and instead consider the equation
\begin{equation}\label{eq:Phi_1-on-neck}
  \Delta \Phi_1 = \nabla^\perp \tilde G_{\alpha,1} \nabla \left( \varphi_\delta(1 - \varphi_{r_\alpha R}) u_\alpha \right),
\end{equation}
with vanishing boundary conditions on \( D \). Then, by \cref{lem:Wente-Bethuel-Helein}, we obtain
\begin{equation}\label{eq:nabla-Phi-1-intermediate}
  \lVert \nabla \Phi_1 \rVert_{L^2} \leq C \lVert \nabla \tilde G_{\alpha,1} \rVert_{L^{2,\infty}} \lVert \nabla \left( \varphi_\delta(1 - \varphi_{r_\alpha R}) u_\alpha \right) \rVert_{L^2}.
\end{equation}

We now estimate the \( L^{2,\infty} \) norm of \( \nabla \tilde G_{\alpha,1} \). Since \( L^2 \hookrightarrow L^{2,\infty} \) and \( \tilde G_{\alpha,1} = G_{\alpha,1} - \bar G_{\alpha,1} \) on \( T_{2\delta, r_\alpha R} \), it follows that
\begin{align}\label{eq:nabla-tilde-G-alpha-1-L2-infty-intermediate}
  \lVert \nabla \tilde G_{\alpha,1} \rVert_{L^{2,\infty}}
  &\leq C\left( \lVert \nabla \tilde G_{\alpha,1} \rVert_{L^2(\mathbb{R}^2\setminus B_{2\delta})}
    +\lVert \nabla  G_{\alpha,1} \rVert_{L^{2,\infty}(T_{2\delta,r_\alpha R})}
    +\lVert \nabla \tilde G_{\alpha,1} \rVert_{L^2(B_{r_\alpha R})}\right)\nonumber\\
  &\leq C\Bigl( 
    \begin{multlined}[t]
  \lVert \nabla \left( \varphi_{4\delta}(G_{\alpha,1}-\bar G_{\alpha,1}^1) \right) \rVert_{L^2(B_{8\delta}\setminus B_{2\delta})}
    +\lVert \nabla  G_{\alpha,1} \rVert_{L^{2,\infty}(T_{2\delta,r_\alpha R})}\\
    +\lVert \nabla \left( (1-\varphi_{r_\alpha R/2})(G_{\alpha,1}-\bar G_{\alpha,1}^2) \right) \rVert_{L^2(B_{r_\alpha R}\setminus B_{r_\alpha R/2})}
  \Bigr).
\end{multlined}
\end{align}
To estimate the three terms above, we first apply the Poincar\'e inequality:
\begin{align*}
  &\lVert \nabla \left(\varphi_{4\delta} (G_{\alpha,1}-\bar G^1_{\alpha,1})\right) \rVert_{L^2(B_{8\delta}\setminus B_{2\delta})}\\
  &\qquad\leq \lVert \nabla \varphi_{4\delta}(G_{\alpha,1}-\bar G_{\alpha,1}^1) \rVert_{L^2(B_{8\delta}\setminus B_{2\delta})}+\lVert \varphi_{4\delta}\nabla (G_{\alpha,1}-\bar G_{\alpha,1}^1) \rVert_{L^2(B_{8\delta}\setminus B_{2\delta})}\\
  &\qquad\leq \left( \frac{C}{\delta}\cdot C\delta +1 \right)\lVert \nabla (G_{\alpha,1}-\bar G_{\alpha,1}^1) \rVert_{L^2(B_{8\delta}\setminus B_{2\delta})}\leq C\lVert \nabla G_{\alpha,1} \rVert_{L^2(B_{8\delta}\setminus B_{2\delta})},
\end{align*}
Similarly,
\[
  \lVert \nabla \left( (1 - \varphi_{r_\alpha R/2})(G_{\alpha,1} - \bar G_{\alpha,1}^2) \right) \rVert_{L^2(B_{r_\alpha R} \setminus B_{r_\alpha R/2})}
  \leq C \lVert \nabla G_{\alpha,1} \rVert_{L^2(B_{r_\alpha R} \setminus B_{r_\alpha R/2})}.
\]
\begin{clm}
For all \( t \in [r_\alpha R/2, 4\delta] \), we have
\[
  \lVert \nabla G^{ij}_{\alpha,1} \rVert_{L^2(B_{2t} \setminus B_t)} \leq C\epsilon.
\]
\end{clm}
Indeed, for any \( t \in [r_\alpha R/2, 4\delta] \), from \eqref{eq:u-nabla-u-hodege-decompo}, we obtain
\begin{align*}
  \lVert \nabla G^{ij}_{\alpha,1} \rVert_{L^2(B_{2t} \setminus B_t)}
  &= \lVert \nabla^\perp G^{ij}_{\alpha,1} \rVert_{L^2(B_{2t} \setminus B_t)} \\
  &\leq \lVert f_\alpha \left( u_\alpha^j \nabla_{A_\alpha} u_\alpha^i - u_\alpha^i \nabla_{A_\alpha} u_\alpha^j \right) \rVert_{L^2(B_{2t} \setminus B_t)} + \lVert \nabla G_{\alpha,2}^{ij} \rVert_{L^2(B_{2t} \setminus B_t)} \\
  &\leq C \lVert f_\alpha \rVert_{C^0} \lVert u_\alpha \rVert_{C^0} \lVert \nabla_{A_\alpha} u_\alpha \rVert_{L^2(B_{2t} \setminus B_t)} + \lVert \nabla G_{\alpha,2}^{ij} \rVert_{L^2(B_{2t} \setminus B_t)} \\
  &\leq C\epsilon + \lVert \nabla G_{\alpha,2}^{ij} \rVert_{L^2(B_{2t} \setminus B_t)},
\end{align*}
where the last line uses the one-bubble assumption \eqref{eq:small-energy-on-each-annular-neck}, and \eqref{eq:f-alpha-C0} to control the \( C^0 \) norms.

Next, using \eqref{eq:G-alpha-2}, \cref{lem:div-elliptic-estimate}, and \cref{rmk:a_nabla_a_L_infinity}, we estimate
\begin{equation}\label{eq:nbla-G-alpha-2-L2}
  \lVert \nabla G^{ij}_{\alpha,2} \rVert_{L^2(B_{2t} \setminus B_t)} 
  \leq 2 \lVert \mathrm{tr}[F_{A_\alpha}, a_\alpha] \rVert_{L^2(B_{2t} \setminus B_t)} 
  \leq C \lVert a_\alpha \rVert_{L^\infty(B_{8\delta})} \lVert F_{A_\alpha} \rVert_{L^\infty(B_{8\delta})} \delta 
  \leq C\epsilon,
\end{equation}
where we may assume without loss of generality that \( \delta < \epsilon \). This completes the proof of the claim.
\bigskip

Hence, from \eqref{eq:nabla-tilde-G-alpha-1-L2-infty-intermediate}, we conclude:
\[
  \lVert \nabla \tilde G_{\alpha,1} \rVert_{L^{2,\infty}} 
  \leq C\epsilon + \lVert \nabla G_{\alpha,1} \rVert_{L^{2,\infty}(T_{2\delta, r_\alpha R})}.
\]
A similar argument gives:
\[
  \lVert \nabla G_{\alpha,1} \rVert_{L^{2,\infty}(T_{2\delta, r_\alpha R})}
  = \lVert \nabla^\perp G_{\alpha,1} \rVert_{L^{2,\infty}(T_{2\delta, r_\alpha R})} 
  \leq C \lVert \nabla_{A_\alpha} u_\alpha \rVert_{L^{2,\infty}(T_{2\delta, r_\alpha R})}
  + \lVert \nabla G_{\alpha,2} \rVert_{L^{2,\infty}(T_{2\delta, r_\alpha R})}.
\]
Using the estimate \eqref{eq:L-2-infty-u},
\[
  \lVert \nabla_{A_\alpha} u_\alpha \rVert_{L^{2,\infty}(T_{2\delta, r_\alpha R})} \leq C\epsilon.
\]
Likewise, by the proof of \eqref{eq:nbla-G-alpha-2-L2}, we have
\[
  \lVert \nabla G_{\alpha,2} \rVert_{L^{2,\infty}(T_{2\delta, r_\alpha R})} 
  \leq C \lVert \nabla G_{\alpha,2} \rVert_{L^{2}(T_{2\delta, r_\alpha R})} 
  \leq C\epsilon.
\]
Thus, we obtain the key estimate:
\[
  \lVert \nabla \tilde G_{\alpha,1} \rVert_{L^{2,\infty}} \leq C\epsilon.
\]
Therefore, from \eqref{eq:nabla-Phi-1-intermediate}, we deduce that
\begin{equation}\label{eq:nabla-Phi_1-L2}
    \lVert \nabla \Phi_1 \rVert_{L^2}
    \leq C\epsilon \lVert \nabla \left( \varphi_\delta(1 - \varphi_{r_\alpha R}) u_\alpha \right) \rVert_{L^2} 
    \leq C\epsilon \left( \lVert \nabla u_\alpha \rVert_{L^2}
      + \lVert \nabla \left( \varphi_\delta(1 - \varphi_{r_\alpha R}) \right) \rVert_{L^2} \right)
    \leq C\epsilon.
\end{equation}

To estimate \( \lVert \nabla \Phi_2 \rVert_{L^2} \), we recall from \cref{prop:small-energy-estimate} that, for \( x \in B_{2\delta} \setminus B_\delta \), one has
\begin{align*}
  \lvert u_\alpha - \bar u_\alpha^1 \rvert(x)
  &= \left\lvert \frac{1}{\lvert B_{2\delta} \setminus B_\delta \rvert} 
    \int_{B_{2\delta} \setminus B_\delta} \left( u_\alpha(x) - u_\alpha(y) \right) dy \right\rvert 
  \leq \frac{1}{\lvert B_{2\delta} \setminus B_\delta \rvert}
    \int_{B_{2\delta} \setminus B_\delta} \lvert u_\alpha(x) - u_\alpha(y) \rvert dy \\
  &\leq \left( \sup_{B_{2\delta} \setminus B_\delta} \lvert \nabla u_\alpha \rvert \right) 
    \frac{1}{\lvert B_{2\delta} \setminus B_\delta \rvert}
    \int_{B_{2\delta} \setminus B_\delta} \lvert x - y \rvert dy
  \leq C\epsilon,
\end{align*}
i.e.,
\begin{equation}\label{eq:mean-value-u1}
  \lvert u_\alpha - \bar u_\alpha^1 \rvert(x) \leq C\epsilon, \quad \forall x \in B_{2\delta} \setminus B_\delta.
\end{equation}
Similarly, for \( x \in B_{2r_\alpha R} \setminus B_{r_\alpha R} \), we have
\begin{equation}\label{eq:mean-value-u2}
  \lvert u_\alpha - \bar u_\alpha^2 \rvert(x) \leq C\epsilon.
\end{equation}

Now using the equation \eqref{eq:Phi_2} for \( \Phi_2 \), and applying the elliptic estimate and Sobolev embedding, we obtain
\begin{align*}
  \lVert \nabla \Phi_2 \rVert_{L^4}
  &\leq C \Bigl( \lVert \nabla G_{\alpha,2} \nabla_{A_\alpha}(\varphi_\delta(1 - \varphi_{r_\alpha R}) u_\alpha)
    - \varphi_\delta(1 - \varphi_{r_\alpha R}) \Phi_\alpha(A_\alpha, u_\alpha) \rVert_{L^2} \\
  &\qquad\qquad + \lVert \nabla^\perp G_{\alpha,1} \cdot a(\varphi_\delta(1 - \varphi_{r_\alpha R}) u_\alpha) \rVert_{L^2} \\
  &\qquad\qquad\qquad\qquad\qquad + \lVert f_\alpha(\nabla \varphi_\delta (u_\alpha - \bar u_\alpha^1) 
    - \nabla \varphi_{r_\alpha R}(u_\alpha - \bar u_\alpha^2)) \rVert_{L^2} \Bigr) \\
  &\leq C \Bigl( 
    \lVert \nabla G_{\alpha,2} \nabla_{A_\alpha}(\varphi_\delta(1 - \varphi_{r_\alpha R}) u_\alpha) \rVert_{L^2} 
    + \lVert \varphi_\delta(1 - \varphi_{r_\alpha R}) \Phi_\alpha(A_\alpha, u_\alpha) \rVert_{L^2} \\
  &\qquad\qquad + \lVert \nabla G_{\alpha,1} \rVert_{L^2(T_{2\delta, r_\alpha R})} \lVert a \rVert_{L^\infty} 
    + C\epsilon \lVert f_\alpha \rVert_{C^0} 
      \left( \lVert \nabla \varphi_\delta \rVert_{L^2} 
        + \lVert \nabla(1 - \varphi_{r_\alpha R}) \rVert_{L^2} \right) 
    \Bigr) \\
  &\leq C(\epsilon_0, \Lambda, \delta_1),
\end{align*}
where we use \eqref{eq:f-alpha-C0} and the following estimates: from the equation \eqref{eq:G-alpha-2} of \( G_{\alpha,2} \)  and \cref{rmk:a_nabla_a_L_infinity}, we have
\begin{align*}
  \lVert \nabla G_{\alpha,2} \rVert_{L^\infty(T_{2\delta, r_\alpha R})}
  &\leq C \lVert G_{\alpha,2} \rVert_{W^{2,2\alpha}(T_{2\delta, r_\alpha R})} 
  \leq C \lVert \div( \mathrm{tr}[F_A, a] ) \rVert_{L^{2\alpha}(T_{2\delta, r_\alpha R})} \\
  &\leq C \left( \lVert \nabla^2 a \rVert_{L^{2\alpha}(D)} \lVert a \rVert_{L^\infty(D)}
    + \lVert \nabla a \rVert_{L^{4\alpha}(D)}^2 
    + \lVert \nabla a \rVert_{L^{2\alpha}(D)} \lVert a \rVert_{L^\infty(D)}^2 \right) \\
  &\leq C(\epsilon_0, \Lambda, \delta_1).
\end{align*}
Also,
\begin{align*}
  \lVert \Phi_\alpha \rVert_{L^2(T_{2\delta, r_\alpha R})}
  &\leq C \Bigl(
    \lVert f_\alpha \rVert_{L^\infty(D)} \lVert a \cdot du \rVert_{L^2(T_{2\delta, r_\alpha R})} 
    + \lVert a \cdot a u \rVert_{L^2(T_{2\delta, r_\alpha R})} \\
  &\hspace*{12em} + \lVert \mu(u) \cdot \nabla \mu(u) \rVert_{L^2(T_{2\delta, r_\alpha R})}
  \Bigr) \\
  &\leq C \lVert a \cdot du \rVert_{L^2(T_{\delta, r_\alpha R})} + C\delta 
  \leq C(\epsilon_0, \Lambda, \delta_1).
\end{align*}
Moreover, for the term \( \lVert \nabla G_{\alpha,1} \rVert_{L^2(T_{2\delta, r_\alpha R})} \), we can proceed as before:
\begin{align*}
  \lVert \nabla G_{\alpha,1} \rVert_{L^2(T_{2\delta, r_\alpha R})}
  &= \lVert \nabla^\perp G_{\alpha,1} \rVert_{L^2(T_{2\delta, r_\alpha R})} \\
  &\leq \lVert f_\alpha \left( u_\alpha^j \nabla_{A_\alpha} u_\alpha^i - u_\alpha^i \nabla_{A_\alpha} u_\alpha^j \right) \rVert_{L^2(T_{2\delta, r_\alpha R})}
    + \lVert \nabla G_{\alpha,2}^{ij} \rVert_{L^2(T_{2\delta, r_\alpha R})} \\
  &\leq C \lVert f_\alpha \rVert_{C^0} \lVert u_\alpha \rVert_{C^0} 
    \lVert \nabla_{A_\alpha} u_\alpha \rVert_{L^2(T_{2\delta, r_\alpha R})}
    + \lVert \nabla G_{\alpha,2}^{ij} \rVert_{L^2(T_{2\delta, r_\alpha R})} \\
  &\leq C(\epsilon_0, \Lambda, \delta_1).
\end{align*}

By H\"older's inequality, we deduce:
\begin{equation}\label{eq:nabla-Phi_2-L2}
  \lVert \nabla \Phi_2 \rVert_{L^2} 
  = \lVert \nabla \Phi_2 \rVert_{L^2(D_{2\delta})} 
  \leq C \lVert \nabla \Phi_2 \rVert_{L^4(D_{2\delta})} \sqrt{\delta} 
  \leq C\epsilon.
\end{equation}
By \eqref{eq:Laplace-D}, we know that
\[
  \Delta D_\alpha = \div( f_\alpha \nabla_{A_\alpha} \tilde u_\alpha ) = \Delta \Phi_1 + \Delta \Phi_2,
\]
and since \( D_\alpha \) vanishes at infinity, we conclude that \( D_\alpha = \Phi_1 + \Phi_2 \). Therefore, from \eqref{eq:nabla-Phi_1-L2} and \eqref{eq:nabla-Phi_2-L2}, we conclude:
\[
  \lVert \nabla D_\alpha \rVert_{L^2} 
  \leq \lVert \nabla \Phi_1 \rVert_{L^2} + \lVert \nabla \Phi_2 \rVert_{L^2} 
  \leq C\epsilon.
\]

Finally, using the trivial estimate and \eqref{eq:f-nabla-u-by-nabla-D}, we obtain:
\begin{equation}\label{eq:EI-in-proof}
  \lVert \nabla_{A_\alpha} u_\alpha \rVert_{L^2(T_{\delta, r_\alpha R})}
  \leq \lVert \nabla_{A_\alpha} \tilde u_\alpha \rVert_{L^2(D)}
  \leq \lVert f_\alpha \nabla_{A_\alpha} \tilde u_\alpha \rVert_{L^2(D)}
  \leq C \lVert \nabla D_\alpha \rVert_{L^2(D)} 
  \leq C\epsilon.
\end{equation}

This completes the proof of \cref{lem:EI-E-A-u-D}.
\end{proof}

\step[Proof of the \( \alpha \)-energy ideneity]
We now complete the proof of the energy identity for the \( \alpha \)-energy of \( \alpha \)-YMH fields. By \cref{prop:equivalence-of-EI}, we only need to show \( \bar\mu=1 \) and \eqref{eq:equivalence-of-EI}. As a first step, we establish that \( \bar\mu = 1 \) via the following lemma.
\begin{lem}\label{lem:f_alpha-C0-equal-1}
  With the same notations and assumptions as above, we have
  \[
    \lim_{\alpha \to 1} \lVert f_\alpha \rVert_{C^0(D)} = \bar\mu = 1.
  \]
\end{lem}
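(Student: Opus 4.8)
The plan is to reduce the whole statement to the single scaling relation \( \bar\mu=1 \), and then to extract the latter from the Pohozaev identity \eqref{eq:Pohozaev} by integrating it across the neck. I would first record an elementary sandwich for \( \|f_\alpha\|_{C^0} \). Since \( (1+t)^{\alpha-1}\ge 1 \) for \( t\ge 0 \) and \( \alpha>1 \), we have \( f_\alpha\ge\alpha \) pointwise, so \( \|f_\alpha\|_{C^0(D)}\ge\alpha \). Conversely, the gradient bound \eqref{eq:nabla-u-alpha-c0} gives \( \|f_\alpha\|_{C^0(D)}\le\alpha(1+C^2 r_\alpha^{-2})^{\alpha-1}\le\alpha(1+C^2)^{\alpha-1}r_\alpha^{2-2\alpha} \). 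Letting \( \alpha\to1 \) and recalling \( r_\alpha^{2-2\alpha}\to\bar\mu \) together with \( (1+C^2)^{\alpha-1}\to1 \), this yields \( 1\le\liminf_{\alpha\to1}\|f_\alpha\|_{C^0}\le\limsup_{\alpha\to1}\|f_\alpha\|_{C^0}\le\bar\mu \). Hence the lemma follows once \( \bar\mu=1 \) is established. Writing \( r_\alpha^{2-2\alpha}=e^{2(\alpha-1)|\log r_\alpha|} \), the subsequence defining \( \bar\mu \) gives \( \lim_{\alpha\to1}(\alpha-1)|\log r_\alpha|=\tfrac12\log\bar\mu \), so it is equivalent to show \( (\alpha-1)|\log r_\alpha|\to0 \).

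For the core step I would integrate \eqref{eq:Pohozaev} in \( t \) over the neck \( [r_\alpha R,\delta] \). Set \( g_\alpha:=(1+|\nabla_{A_\alpha}u_\alpha|^2)^{\alpha-1}=f_\alpha/\alpha \) and \( \Xi(t):=\int_{D_t}g_\alpha|\nabla_{A_\alpha}u_\alpha|^2\,dv_g \), which is non-decreasing in \( t \). By the coarea formula the \( t \)-integral of the boundary term on the left of \eqref{eq:Pohozaev} becomes the bulk integral over \( T_{\delta,r_\alpha R} \) of \( g_\alpha\bigl(|\nabla_{A_\alpha;\partial_r}u_\alpha|^2-\tfrac1{2\alpha}|\nabla_{A_\alpha}u_\alpha|^2\bigr) \), whose absolute value is at most \( C\,E(A_\alpha,u_\alpha;T_{\delta,r_\alpha R}) \) by the bound \( g_\alpha\le\Lambda' \) from \eqref{eq:f-alpha-C0}; the accumulated error contributes only \( O(\delta^2) \). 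By \cref{lem:EI-E-A-u-D} this right-hand side tends to \( 0 \) in the iterated limit \( \lim_{\delta\to0}\lim_{R\to\infty}\lim_{\alpha\to1} \). The integrated left-hand side equals \( \tfrac{\alpha-1}{\alpha}\int_{r_\alpha R}^\delta\Xi(t)\,\tfrac{dt}{t} \), and monotonicity of \( \Xi \) produces the lower bound \( \tfrac{\alpha-1}{\alpha}\,\Xi(r_\alpha R)\,\log\tfrac{\delta}{r_\alpha R} \); a rescaling computation (using \( u_\lambda\to\omega \), \( a_\lambda\to0 \) in \( C^\infty(D_R) \)) shows \( \Xi(r_\alpha R)\to2\bar\mu\,E(\omega;D_R) \) as \( \alpha\to1 \).

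Combining the two sides gives \( \tfrac{\alpha-1}{\alpha}\,\Xi(r_\alpha R)\,\log\tfrac{\delta}{r_\alpha R}\le C\,E(A_\alpha,u_\alpha;T_{\delta,r_\alpha R})+O(\delta^2) \). Passing to \( \lim_{\alpha\to1} \), using \( \tfrac{\alpha-1}{\alpha}\log(\delta/R)\to0 \) and \( \lim_\alpha(\alpha-1)|\log r_\alpha|=\tfrac12\log\bar\mu \), and then sending \( R\to\infty \) and \( \delta\to0 \), the right-hand side vanishes while the left-hand side tends to \( \bar\mu\,E(\omega)\log\bar\mu \). Since the bubble \( \omega \) is nontrivial we have \( E(\omega)>0 \) (\cref{prop:bubbing}), and since \( \bar\mu\ge1 \) by \eqref{eq:r-alpha-2-2alpha-unifom-bounded}, this forces \( \log\bar\mu\le0 \), i.e. \( \bar\mu=1 \). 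Feeding \( \bar\mu=1 \) back into the sandwich of the first paragraph yields \( \lim_{\alpha\to1}\|f_\alpha\|_{C^0(D)}=1=\bar\mu \), as claimed.

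The main obstacle I anticipate is the careful bookkeeping of the three nested limits. One must ensure that \( \Xi(r_\alpha R) \) is bounded below by a positive constant before extracting the logarithm—this is exactly where nontriviality of the bubble and the convergence \( \Xi(r_\alpha R)\to2\bar\mu\,E(\omega;D_R) \) are used—and that the error \( O(t) \) in \eqref{eq:Pohozaev}, arising from the curvature and Higgs terms, is uniform in \( \alpha \) so that its \( t \)-integral is genuinely \( O(\delta^2) \). The vanishing of the neck boundary terms rests entirely on the Dirichlet-energy no-neck estimate \cref{lem:EI-E-A-u-D}, so the logical dependence on that lemma must be respected throughout.
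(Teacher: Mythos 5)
Your proposal is correct and takes essentially the same route as the paper: integrate the Pohozaev identity \eqref{eq:Pohozaev} over the neck, bound the resulting bulk term by the neck energy from \cref{lem:EI-E-A-u-D} (plus an \( O(\delta^2) \) error), and lower-bound the right-hand side by the bubble energy times \( \tfrac{\alpha-1}{\alpha}\log\tfrac{\delta}{r_\alpha R} \) to force \( (\alpha-1)\lvert\log r_\alpha\rvert\to 0 \), i.e. \( \bar\mu=1 \). Your opening sandwich \( 1\le\liminf\lVert f_\alpha\rVert_{C^0}\le\limsup\lVert f_\alpha\rVert_{C^0}\le\bar\mu \) is exactly the paper's concluding estimate \( 1\le\lVert f_\alpha\rVert_{C^0(D)}\le(1+Cr_\alpha^{-2})^{\alpha-1} \), merely presented first rather than last.
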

\begin{proof}
  Integrating \eqref{eq:Pohozaev} with respect to \( t \) from \( r_\alpha R \) to \( \delta \), we obtain
  \[
    \int_{T_{\delta, r_\alpha R}} f_\alpha \left( \lvert \nabla_{A_\alpha;\partial_r} u_\alpha \rvert^2 
    - \frac{1}{2\alpha} \lvert \nabla_{A_\alpha} u_\alpha \rvert^2 \right) dv_g
    = \int_{r_\alpha R}^\delta \frac{\alpha - 1}{\alpha t} \int_{D_t} f_\alpha \lvert \nabla_{A_\alpha} u_\alpha \rvert^2 dv_g dt + \delta O(\delta).
  \]
  Clearly,
  \begin{multline*}
    \int_{r_\alpha R}^\delta \frac{\alpha - 1}{\alpha t} \int_{D_t} f_\alpha \lvert \nabla_{A_\alpha} u_\alpha \rvert^2 dv_g dt \\
    \geq \int_{r_\alpha R}^\delta \frac{\alpha - 1}{\alpha t} \int_{D_{r_\alpha R}} \lvert \nabla_{A_\alpha} u_\alpha \rvert^2 dv_g dt
    = \frac{2(\alpha - 1)}{\alpha} \ln \frac{\delta}{r_\alpha R} E(A_\alpha, u_\alpha; D_{r_\alpha R}),
  \end{multline*}
  where
  \[
    E(A_\alpha, u_\alpha; D_{r_\alpha R}) := \frac{1}{2} \int_{D_{r_\alpha R}} \lvert \nabla_{A_\alpha} u_\alpha \rvert^2.
  \]

  Recall \( E(\omega) = \frac{1}{2} \int_{\mathbb{R}^2} \lvert \nabla \omega \rvert^2 dv_g \) be the energy of a non-trivial harmonic sphere \( \omega \). Then, taking the limit \( \alpha \to 1 \), we have
  \[
    \lim_{\alpha \to 1} \int_{r_\alpha R}^\delta \frac{\alpha - 1}{\alpha t} \int_{D_t} f_\alpha \lvert \nabla_{A_\alpha} u_\alpha \rvert^2 dv_g dt
    \geq C \lim_{\alpha \to 1} \frac{\alpha - 1}{\alpha}  \ln \frac{\delta}{r_\alpha R}  E(\omega)
    = C \lim_{\alpha \to 1} \ln r_\alpha^{1 - \alpha} E(\omega).
  \]
  On the other hand, it is evident that
  \[
    \int_{T_{\delta, r_\alpha R}} f_\alpha \left( \lvert \nabla_{A_\alpha; \partial_r} u_\alpha \rvert^2 
    - \frac{1}{2\alpha} \lvert \nabla_{A_\alpha} u_\alpha \rvert^2 \right) dv_g
    \leq 2\Lambda' E(A_\alpha, u_\alpha; T_{\delta, r_\alpha R}) \leq C\epsilon^2,
  \]
  where we applied the energy identity estimate for \( (A_\alpha, u_\alpha) \), cf.~\eqref{eq:EI-in-proof}.
  Combining the above, we deduce
  \begin{align*}
    \lim_{\alpha \to 1} \ln r_\alpha^{1 - \alpha} E(\omega)
    &\leq C \lim_{\alpha \to 1} \int_{r_\alpha R}^\delta \frac{\alpha - 1}{\alpha t} \int_{D_t} f_\alpha \lvert \nabla_{A_\alpha} u_\alpha \rvert^2 dv_g dt \\
    &\leq C \left( \lim_{\alpha \to 1} \int_{T_{\delta, r_\alpha R}} f_\alpha \left( \lvert \nabla_{A_\alpha; \partial_r} u_\alpha \rvert^2 
      - \frac{1}{2\alpha} \lvert \nabla_{A_\alpha} u_\alpha \rvert^2 \right) dv_g 
      + \delta O(\delta) \right) \\
    &\leq C(\epsilon^2 + \delta^2).
  \end{align*}
  Therefore,
  \[
    \lim_{\alpha \to 1} \ln r_\alpha^{1 - \alpha} = 0 \quad \Rightarrow \quad \sqrt{\bar\mu} = \lim_{\alpha \to 1} r_\alpha^{1 - \alpha} = 1\implies \bar \mu=1.
  \]
  It follows that
  \[
    1 \leq \lVert f_\alpha \rVert_{C^0(D)}
    = \lVert (1 + \lvert \nabla_{A_\alpha} u_\alpha \rvert^2)^{\alpha - 1} \rVert_{C^0(D)}
    \leq \left( 1 + C r_\alpha^{-2} \right)^{\alpha - 1},
  \]
  where we use the bound \( \lVert \nabla_{A_\alpha} u_\alpha \rVert_{C^0(D)} \leq C / r_\alpha \), cf.~\eqref{eq:nabla-u-alpha-c0}. The proof is complete.
\end{proof}

Finally, note that,
\[
  \frac{1}{2} \int_{T_{\delta, r_\alpha R}} \left( 1 + \lvert \nabla_{A_\alpha} u_\alpha \rvert^2 \right)^{\alpha}
  = \frac{1}{2\alpha} \int_{T_{\delta, r_\alpha R}} f_\alpha \left( 1 + \lvert \nabla_{A_\alpha} u_\alpha \rvert^2 \right)
  \leq \frac{\lVert f_\alpha \rVert_{L^\infty(D)}}{2\alpha} \int_{T_{\delta, r_\alpha R}} \left( 1 + \lvert \nabla_{A_\alpha} u_\alpha \rvert^2 \right).
\]
By \eqref{eq:f-alpha-C0} and \eqref{eq:EI-in-proof}, the last term is bounded above by \( C(\delta^2 + \epsilon^2) \), provided that \( \alpha - 1 \) is sufficiently small. Taking the limit as \( \alpha \to 1 \) first, then letting \( R \to +\infty \), and finally \( \delta \to 0 \), we obtain \eqref{eq:equivalence-of-EI}. Together with \cref{lem:f_alpha-C0-equal-1}, which shows that \( \bar\mu = 1 \). Therefore, by \cref{prop:equivalence-of-EI}, the proof of the energy identity part of \cref{thm:main} is complete.

\section{The no neck property}\label{sec:NN}
In this section, we aim to prove that
\[
  \lim_{\delta \to 0} \lim_{R \to +\infty} \lim_{\alpha \to 1} \operatorname{osc}_{B_\delta \setminus B_{r_\alpha R}} u_\alpha 
  \mathpunct{:}= \lim_{\delta \to 0} \lim_{R \to +\infty} \lim_{\alpha \to 1} \sup_{x, y \in B_\delta \setminus B_{r_\alpha R}} \lvert u_\alpha(x) - u_\alpha(y) \rvert = 0,
\]
which will complete the proof of the no-neck part of \cref{thm:main}. The argument follows a similar structure to the energy identity established in \cref{sec:EI}, with two essential ingredients. First, we rely on the uniform bound \( \lVert f_\alpha \rVert_{C^0(D)} \to 1 \) as \( \alpha \to 1 \), shown in \cref{lem:f_alpha-C0-equal-1}. Second, we refine the key estimates by upgrading from the weak norm \( L^{2,\infty} \) to the strong Lorentz norm \( L^{2,1} \). This refinement is enabled by the improved elliptic estimate in \cref{lem:div-elliptic-estimate}, together with the localized energy identity established in \cref{lem:EI-E-A-u-D}.

Recall from the construction of \( \tilde u_\alpha \) that \( \tilde u_\alpha = u_\alpha-\bar u_\alpha^1 \) on \( B_\delta \setminus B_{2r_\alpha R} \) and \( \tilde u_\alpha \) vanishes at infinity. Hence,
\begin{equation}\label{eq:NN-nabla-tilde-u-L21}
  \sup_{x, y \in B_\delta \setminus B_{2r_\alpha R}} \lvert u_\alpha(x) - u_\alpha(y) \rvert
  \leq \sup_{x, y \in \mathbb{R}^2} \lvert \tilde u_\alpha(x) - \tilde u_\alpha(y) \rvert
  \leq 2\lVert \tilde u_\alpha \rVert_{C^0}
  \leq C \lVert \nabla \tilde u_\alpha \rVert_{L^{2,1}},
\end{equation}
where we used Lorentz space estimates for functions vanishing at infinity (see \cref{lem:embedding-L21-C0}) in the last inequality.

To estimate \( \lVert \nabla \tilde u_\alpha \rVert_{L^{2,1}} \), we observe from \eqref{eq:Hodge-decomposition} that
\[
  \Delta Q_\alpha = -\div \left( (f_\alpha - 1) \nabla_{A_\alpha}^\perp \tilde u_\alpha \right).
\]
Since \( \tilde u_\alpha \) is supported in \( B_{2\delta} \) and \( Q_\alpha \) vanishes at infinity, \cref{lem:div-elliptic-estimate} implies
\[
  \lVert \nabla Q_\alpha \rVert_{L^{2,1}} \leq C \lVert (f_\alpha - 1) \nabla_{A_\alpha}^\perp \tilde u_\alpha \rVert_{L^{2,1}}
  = C \left\lVert \left(1 - \frac{1}{f_\alpha} \right) f_\alpha \nabla_{A_\alpha} \tilde u_\alpha \right\rVert_{L^{2,1}}.
\]
Thus, by \cref{lem:f_alpha-C0-equal-1}, for sufficiently small \( \alpha - 1 > 0 \),
\[
  \lVert \nabla Q_\alpha \rVert_{L^{2,1}} \leq \frac{1}{2} \lVert f_\alpha \nabla_{A_\alpha} \tilde u_\alpha \rVert_{L^{2,1}},
\]
which, together with \eqref{eq:Hodge-decomposition}, yields
\begin{equation}\label{eq:D-L21}
  \lVert f_\alpha \nabla_{A_\alpha} \tilde u_\alpha \rVert_{L^{2,1}} \leq 2 \lVert \nabla D_\alpha \rVert_{L^{2,1}}.
\end{equation}

We now proceed similarly to the proof of the energy identity (see \cref{sec:EI}). Applying the conservation law \eqref{eq:conservation-law-component}, we recall \eqref{eq:Laplace-D}:
\begin{align*}
  \operatorname{div}(f_\alpha \nabla_{A_\alpha} \tilde u_\alpha)
  &= \nabla^\perp G_{\alpha,1} \cdot \nabla_{A_\alpha}(\varphi_\delta(1 - \varphi_{r_\alpha R}) u_\alpha)
    + \nabla G_{\alpha,2} \cdot \nabla_{A_\alpha}(\varphi_\delta(1 - \varphi_{r_\alpha R}) u_\alpha) \\
  &\qquad - \varphi_\delta(1 - \varphi_{r_\alpha R}) \Phi_\alpha(A_\alpha, u_\alpha) 
   + \operatorname{div} \left( f_\alpha \left[ \nabla \varphi_\delta (u_\alpha - \bar{u}_\alpha^1)
    - \nabla \varphi_{r_\alpha R} (u_\alpha - \bar{u}_\alpha^2) \right] \right).
\end{align*}
Let \( \Phi_1 \) and \( \Phi_2 \) be the solutions to \eqref{eq:Phi_1} and \eqref{eq:Phi_2} with vanishing boundary conditions on \( D \), respectively. From \eqref{eq:Phi_1-on-neck} and \cref{lem:Wente-L21}, here the Jacobian structure in \eqref{eq:Phi_1-on-neck} once again plays  a crucial role, we obtain
\begin{equation}\label{eq:nabla-Phi_1-L21}
  \lVert \nabla \Phi_1 \rVert_{L^{2,1}} \leq C \lVert \nabla \tilde G_{\alpha,1} \rVert_{L^2}
  \lVert \nabla(\varphi_\delta(1 - \varphi_{r_\alpha R}) u_\alpha) \rVert_{L^2}.
\end{equation}
Using a Poincar\'e inequality and arguments as in the energy identity (see \eqref{eq:nabla-tilde-G-alpha-1-L2-infty-intermediate}), we estimate:
\[
  \lVert \nabla \tilde G_{\alpha,1} \rVert_{L^2}
  \leq C \lVert \nabla G_{\alpha,1} \rVert_{L^2(T_{8\delta,r_\alpha R/2})}
  \leq C \left( \lVert f_\alpha \nabla_{A_\alpha} u_\alpha \rVert_{L^2(T_{8\delta,r_\alpha R/2})} + \lVert \nabla G_{\alpha,2} \rVert_{L^2(T_{8\delta,r_\alpha R/2})} \right).
\]
From \eqref{eq:G-alpha-2}, the elliptic estimate yields:
\[
  \lVert \nabla G_{\alpha,2} \rVert_{L^4(T_{8\delta,r_\alpha R/2})}
  \leq C \lVert G_{\alpha,2} \rVert_{W^{2,2}(B_{8\delta})}
  \leq C \lVert a \rVert_{L^\infty} \lVert F_{A_\alpha} \rVert_{L^2} \leq C(\delta_1),
\]
and thus by H\"older's inequality,
\[
  \lVert \nabla G_{\alpha,2} \rVert_{L^2(T_{8\delta,r_\alpha R/2})}
  \leq C(\delta_1) \delta^{1/2} \leq C \epsilon.
\]
Combining with \eqref{eq:f-alpha-C0} and energy identity \eqref{eq:EI-in-proof},
\[
  \lVert \nabla \tilde G_{\alpha,1} \rVert_{L^2} 
  \leq C \lVert \nabla G_{\alpha,1} \rVert_{L^2(T_{8\delta,r_\alpha R/2})}
  \leq C \left( \lVert f_\alpha \rVert_{C^0} \lVert \nabla_{A_\alpha} u_\alpha \rVert_{L^2(T_{8\delta,r_\alpha R/2})} + C\epsilon \right) \leq C \epsilon.
\]
Plugging this into \eqref{eq:nabla-Phi_1-L21} gives
\begin{equation}\label{eq:L21-Phi_1}
  \lVert \nabla \Phi_1 \rVert_{L^{2,1}} \leq C \epsilon \left( \lVert \nabla u_\alpha \rVert_{L^2}
  + \lVert \nabla (\varphi_\delta(1 - \varphi_{r_\alpha R})) \rVert_{L^2} \right) \leq C \epsilon.
\end{equation}

For \( \lVert \nabla \Phi_2 \rVert_{L^{2,1}} \), by \eqref{eq:Phi_2} and Sobolev embedding \( W^{2,4/3} \hookrightarrow W^{1,4} \), \cref{lem:div-elliptic-estimate}, and \cref{lem:elliptic-estimate}, we obtain:
\begin{align*}
  \lVert \nabla \Phi_2 \rVert_{L^{2,1}} &\leq C \Bigl(
    \lVert \nabla G_{\alpha,1} \rVert_{L^{4/3}(T_{2\delta,r_\alpha R/2})} \lVert a \rVert_{L^\infty}
    + \lVert \nabla G_{\alpha,2} \cdot \nabla_{A_\alpha} (\varphi_\delta(1 - \varphi_{r_\alpha R}) u_\alpha) \rVert_{L^{4/3}(T_{2\delta,r_\alpha R/2})} \\
    &\qquad + \lVert \Phi_\alpha(A_\alpha, u_\alpha) \rVert_{L^{4/3}(T_{2\delta,r_\alpha R/2})}
    + \lVert f_\alpha \left( \nabla \varphi_\delta(u_\alpha - \bar u_\alpha^1)
    - \nabla \varphi_{r_\alpha R}(u_\alpha - \bar u_\alpha^2) \right) \rVert_{L^{2,1}} \Bigr).
\end{align*}

Each term on the right-hand side is bounded by \( C(\sqrt{\delta} + \epsilon) \), using estimates analogous to those in the proof of energy identity. In fact, clearly
\[
  \lVert \nabla G_{\alpha,1} \rVert_{L^{4/3}(T_{2\delta,r_\alpha R/2})}
  \leq C\lVert \nabla G_{\alpha,1} \rVert_{L^{2}(T_{8\delta,r_\alpha R/2})}
  \leq C\epsilon,
\]
Next, by the energy identity \eqref{eq:EI-in-proof},
\[
  \lVert \nabla G_{\alpha,2}  \nabla_{A_\alpha} \left( \varphi_\delta (1 - \varphi_{r_\alpha R}) u_\alpha \right) \rVert_{L^{4/3}}
  \leq C \lVert \nabla G_{\alpha,2} \rVert_{L^4(T_{2\delta,r_\alpha R/2})}
  \lVert 1 + \lvert \nabla_{A_\alpha} u_\alpha \rvert \rVert_{L^2(T_{2\delta,r_\alpha R/2})}
  \leq C (\delta + \epsilon),
\]
and
\[
  \lVert \Phi_\alpha(A_\alpha, u_\alpha) \rVert_{L^{4/3}(T_{2\delta,r_\alpha R/2})}
  \leq C \left( \lVert a \rVert_{L^\infty}\lVert f_\alpha \rVert_{C^0} \lVert \nabla_{A_\alpha} u_\alpha \rVert_{L^{4/3}(D_{2\delta})}
  + \lVert \mu(u_\alpha) \rVert_{L^{4/3}(D_{2\delta})} \right)
  \leq C \sqrt{\delta}.
\]
Finally, we estimate the contribution from the cut-off derivatives:
\begin{align*}
  &\lVert f_\alpha \left( \nabla \varphi_\delta (u_\alpha - \bar u_\alpha^1)
  - \nabla \varphi_{r_\alpha R} (u_\alpha - \bar u_\alpha^2) \right) \rVert_{L^{2,1}} \\
  &\qquad \leq C \left( \lVert (u_\alpha - \bar u_\alpha^1) \nabla \varphi_\delta \rVert_{L^{2,1}}
  + \lVert (u_\alpha - \bar u_\alpha^2) \nabla \varphi_{r_\alpha R} \rVert_{L^{2,1}} \right)
  \leq C \epsilon,
\end{align*}
where we have used \eqref{eq:mean-value-u1} and \eqref{eq:mean-value-u2}, together with the uniform bound \eqref{eq:f-alpha-C0} on \( f_\alpha \) and support properties of \( \varphi_t \).

Hence,
\begin{equation}\label{eq:L21-Phi_2}
  \lVert \nabla \Phi_2 \rVert_{L^{2,1}} \leq C \left( \sqrt{\delta} + \epsilon \right) \leq C \epsilon.
\end{equation}
Since \( D_\alpha = \Phi_1 + \Phi_2 \), combining \eqref{eq:L21-Phi_1} and \eqref{eq:L21-Phi_2} gives
\[
  \lVert \nabla D_\alpha \rVert_{L^{2,1}} \leq \lVert \nabla \Phi_1 \rVert_{L^{2,1}} + \lVert \nabla \Phi_2 \rVert_{L^{2,1}} \leq C \epsilon.
\]

Finally, by \eqref{eq:D-L21}, we conclude:
\[
  \lVert \nabla \tilde u_\alpha \rVert_{L^{2,1}} \leq \lVert f_\alpha \nabla_{A_\alpha} \tilde u_\alpha \rVert_{L^{2,1}}+C\lVert f_\alpha \rVert_{C^0}\lVert a_\alpha \rVert_{L^\infty(D_{2\delta})}\sqrt{\delta}
  \leq 2 \lVert \nabla D_\alpha \rVert_{L^{2,1}}+C\sqrt{\delta} \leq C \epsilon.
\]
Combining this with \eqref{eq:NN-nabla-tilde-u-L21} completes the proof of the no-neck property stated in \cref{thm:main}.

\bigskip
%\bibliography{YMH-EINN-Sphere}
% \bib, bibdiv, biblist are defined by the amsrefs package.
\begin{bibdiv}
\begin{biblist}

\bib{AiSongZhu2019boundary}{article}{
      author={Ai, Wanjun},
      author={Song, Chong},
      author={Zhu, Miaomiao},
       title={The boundary value problem for {Y}ang-{M}ills-{H}iggs fields},
        date={2019},
        ISSN={0944-2669},
     journal={Calc. Var. Partial Differential Equations},
      volume={58},
      number={4},
       pages={Paper No. 157, 37},
         url={https://doi.org/10.1007/s00526-019-1587-z},
      review={\MR{3989961}},
}

\bib{Bethuel1992Un}{article}{
      author={Bethuel, Fabrice},
       title={Un r\'{e}sultat de r\'{e}gularit\'{e} pour les solutions de l'\'{e}quation de surfaces \`a courbure moyenne prescrite},
        date={1992},
        ISSN={0764-4442},
     journal={C. R. Acad. Sci. Paris S\'{e}r. I Math.},
      volume={314},
      number={13},
       pages={1003\ndash 1007},
      review={\MR{1168525}},
}

\bib{BethuelBrezisHelein1994GinzburgLandau}{book}{
      author={Bethuel, Fabrice},
      author={Brezis, Ha\"{\i}m},
      author={H\'{e}lein, Fr\'{e}d\'{e}ric},
       title={Ginzburg-{L}andau vortices},
      series={Progress in Nonlinear Differential Equations and their Applications},
   publisher={Birkh\"{a}user Boston, Inc., Boston, MA},
        date={1994},
      volume={13},
        ISBN={0-8176-3723-0},
         url={https://doi.org/10.1007/978-1-4612-0287-5},
      review={\MR{1269538}},
}

\bib{ChenTian1999Compactification}{article}{
      author={Chen, Jingyi},
      author={Tian, Gang},
       title={Compactification of moduli space of harmonic mappings},
        date={1999},
        ISSN={0010-2571},
     journal={Comment. Math. Helv.},
      volume={74},
      number={2},
       pages={201\ndash 237},
         url={https://doi.org/10.1007/s000140050086},
      review={\MR{1691947}},
}

\bib{CieliebakGaioRieraSalamon2002Symplectic}{article}{
      author={Cieliebak, Kai},
      author={Gaio, A.~Rita},
      author={Mundet~i Riera, Ignasi},
      author={Salamon, Dietmar~A.},
       title={The symplectic vortex equations and invariants of {H}amiltonian group actions},
        date={2002},
        ISSN={1527-5256},
     journal={J. Symplectic Geom.},
      volume={1},
      number={3},
       pages={543\ndash 645},
         url={https://doi.org/10.4310/jsg.2001.v1.n3.a3},
      review={\MR{1959059}},
}

\bib{CoifmanLionsMeyerSemmes1993Compensated}{article}{
      author={Coifman, R.},
      author={Lions, P.-L.},
      author={Meyer, Y.},
      author={Semmes, S.},
       title={Compensated compactness and {H}ardy spaces},
        date={1993},
        ISSN={0021-7824},
     journal={J. Math. Pures Appl. (9)},
      volume={72},
      number={3},
       pages={247\ndash 286},
      review={\MR{1225511}},
}

\bib{deGennes1999superconductivity}{book}{
      author={de~Gennes, Pierre‑Gilles},
       title={Superconductivity of metals and alloys},
      series={Advanced Book Classics},
   publisher={Westview Press},
     address={Boulder, CO},
        date={1999},
        ISBN={978-0738201016},
}

\bib{DingTian1995Energy}{article}{
      author={Ding, Weiyue},
      author={Tian, Gang},
       title={Energy identity for a class of approximate harmonic maps from surfaces},
        date={1995},
        ISSN={1019-8385},
     journal={Comm. Anal. Geom.},
      volume={3},
      number={3-4},
       pages={543\ndash 554},
         url={https://doi.org/10.4310/CAG.1995.v3.n4.a1},
      review={\MR{1371209}},
}

\bib{Evans2010Partial}{book}{
      author={Evans, Lawrence~C.},
       title={Partial differential equations},
     edition={Second},
      series={Graduate Studies in Mathematics},
   publisher={American Mathematical Society, Providence, RI},
        date={2010},
      volume={19},
        ISBN={978-0-8218-4974-3},
         url={https://doi.org/10.1090/gsm/019},
      review={\MR{2597943}},
}

\bib{Helein2002Harmonic}{book}{
      author={H\'{e}lein, Fr\'{e}d\'{e}ric},
       title={Harmonic maps, conservation laws and moving frames},
     edition={Second},
      series={Cambridge Tracts in Mathematics},
   publisher={Cambridge University Press, Cambridge},
        date={2002},
      volume={150},
        ISBN={0-521-81160-0},
         url={https://doi.org/10.1017/CBO9780511543036},
        note={Translated from the 1996 French original, With a foreword by James Eells},
      review={\MR{1913803}},
}

\bib{JaffeTaubes1980Vortices}{book}{
      author={Jaffe, Arthur},
      author={Taubes, Clifford},
       title={Vortices and monopoles},
      series={Progress in Physics},
   publisher={Birkh\"{a}user, Boston, MA},
        date={1980},
      volume={2},
        ISBN={3-7643-3025-2},
        note={Structure of static gauge theories},
      review={\MR{614447}},
}

\bib{Jost1991Two}{book}{
      author={Jost, J\"{u}rgen},
       title={Two-dimensional geometric variational problems},
      series={Pure and Applied Mathematics (New York)},
   publisher={John Wiley \&amp; Sons, Ltd., Chichester},
        date={1991},
        ISBN={0-471-92839-9},
        note={A Wiley-Interscience Publication},
      review={\MR{1100926}},
}

\bib{JostLiuZhu2022Asymptotic}{article}{
      author={Jost, J\"{u}rgen},
      author={Liu, Lei},
      author={Zhu, Miaomiao},
       title={Asymptotic analysis and qualitative behavior at the free boundary for {S}acks-{U}hlenbeck {$\alpha$}-harmonic maps},
        date={2022},
        ISSN={0001-8708},
     journal={Adv. Math.},
      volume={396},
       pages={Paper No. 108105, 68},
         url={https://doi.org/10.1016/j.aim.2021.108105},
      review={\MR{4370467}},
}

\bib{Lamm2010Energy}{article}{
      author={Lamm, Tobias},
       title={Energy identity for approximations of harmonic maps from surfaces},
        date={2010},
        ISSN={0002-9947},
     journal={Trans. Amer. Math. Soc.},
      volume={362},
      number={8},
       pages={4077\ndash 4097},
         url={https://doi.org/10.1090/S0002-9947-10-04912-3},
      review={\MR{2608396}},
}

\bib{LiLiuZhuZhu2021Energy}{article}{
      author={Li, Jiayu},
      author={Liu, Lei},
      author={Zhu, Chaona},
      author={Zhu, Miaomiao},
       title={Energy identity and necklessness for {$\alpha $}-{D}irac-harmonic maps into a sphere},
        date={2021},
        ISSN={0944-2669},
     journal={Calc. Var. Partial Differential Equations},
      volume={60},
      number={4},
       pages={Paper No. 146, 19},
         url={https://doi.org/10.1007/s00526-021-02019-0},
      review={\MR{4281252}},
}

\bib{LiZhuZhu2023Qualitative}{article}{
      author={Li, Jiayu},
      author={Zhu, Chaona},
      author={Zhu, Miaomiao},
       title={The qualitative behavior for {$\alpha$}-harmonic maps from a surface with boundary into a sphere},
        date={2023},
        ISSN={0002-9947},
     journal={Trans. Amer. Math. Soc.},
      volume={376},
      number={1},
       pages={391\ndash 417},
         url={https://doi.org/10.1090/tran/8740},
      review={\MR{4510114}},
}

\bib{LiZhu2019Energy}{article}{
      author={Li, Jiayu},
      author={Zhu, Xiangrong},
       title={Energy identity and necklessness for a sequence of {S}acks-{U}hlenbeck maps to a sphere},
        date={2019},
        ISSN={0294-1449},
     journal={Ann. Inst. H. Poincar\'{e} C Anal. Non Lin\'{e}aire},
      volume={36},
      number={1},
       pages={103\ndash 118},
         url={https://doi.org/10.1016/j.anihpc.2018.04.002},
      review={\MR{3906867}},
}

\bib{LiLiuWang2017Blowup}{article}{
      author={Li, Yuxiang},
      author={Liu, Lei},
      author={Wang, Youde},
       title={Blowup behavior of harmonic maps with finite index},
        date={2017},
        ISSN={0944-2669},
     journal={Calc. Var. Partial Differential Equations},
      volume={56},
      number={5},
       pages={Paper No. 146, 16},
         url={https://doi.org/10.1007/s00526-017-1211-z},
      review={\MR{3708270}},
}

\bib{LiWang2010Weak}{article}{
      author={Li, Yuxiang},
      author={Wang, Youde},
       title={A weak energy identity and the length of necks for a sequence of {S}acks-{U}hlenbeck {$\alpha$}-harmonic maps},
        date={2010},
        ISSN={0001-8708},
     journal={Adv. Math.},
      volume={225},
      number={3},
       pages={1134\ndash 1184},
         url={https://doi.org/10.1016/j.aim.2010.03.020},
      review={\MR{2673727}},
}

\bib{LiWang2015counterexample}{article}{
      author={Li, Yuxiang},
      author={Wang, Youde},
       title={A counterexample to the energy identity for sequences of {$\alpha$}-harmonic maps},
        date={2015},
        ISSN={0030-8730},
     journal={Pacific J. Math.},
      volume={274},
      number={1},
       pages={107\ndash 123},
         url={https://doi.org/10.2140/pjm.2015.274.107},
      review={\MR{3320872}},
}

\bib{LinRiviere2002Energy}{article}{
      author={Lin, Fang-Hua},
      author={Rivi\`ere, Tristan},
       title={Energy quantization for harmonic maps},
        date={2002},
        ISSN={0012-7094},
     journal={Duke Math. J.},
      volume={111},
      number={1},
       pages={177\ndash 193},
         url={https://doi.org/10.1215/S0012-7094-02-11116-8},
      review={\MR{1876445}},
}

\bib{LinYang2003Gauged}{article}{
      author={Lin, Fanghua},
      author={Yang, Yisong},
       title={Gauged harmonic maps, {B}orn-{I}nfeld electromagnetism, and magnetic vortices},
        date={2003},
        ISSN={0010-3640},
     journal={Comm. Pure Appl. Math.},
      volume={56},
      number={11},
       pages={1631\ndash 1665},
         url={https://doi.org/10.1002/cpa.10106},
      review={\MR{1995872}},
}

\bib{LiuZhu2024Asymptotic}{article}{
      author={Liu, Lei},
      author={Zhu, Miaomiao},
       title={Asymptotic analysis for {S}acks-{U}hlenbeck {$\alpha$}-harmonic maps from degenerating {R}iemann surfaces},
        date={2024},
        ISSN={0065-9266},
     journal={Mem. Amer. Math. Soc.},
      volume={303},
      number={1525},
       pages={v+90},
         url={https://doi.org/10.1090/memo/1525},
      review={\MR{4837617}},
}

\bib{Moore2007Energy}{unpublished}{
      author={Moore, John~Douglas},
       title={Energy growth in minimal surface bubbles},
        date={2007},
        note={Preprint, available at \url{https://web.math.ucsb.edu/~moore/growthrev.pdf} (accessed June 26, 2025)},
}

\bib{Moore2017Introduction}{book}{
      author={Moore, John~Douglas},
       title={Introduction to global analysis},
      series={Graduate Studies in Mathematics},
   publisher={American Mathematical Society, Providence, RI},
        date={2017},
      volume={187},
        ISBN={978-1-4704-2950-8},
         url={https://doi.org/10.1090/gsm/187},
        note={Minimal surfaces in Riemannian manifolds},
      review={\MR{3729450}},
}

\bib{MooreSchlafly1980equivariant}{article}{
      author={Moore, John~Douglas},
      author={Schlafly, Roger},
       title={On equivariant isometric embeddings},
        date={1980},
        ISSN={0025-5874},
     journal={Math. Z.},
      volume={173},
      number={2},
       pages={119\ndash 133},
         url={https://doi.org/10.1007/BF01159954},
      review={\MR{583381}},
}

\bib{RieraTian2009Compactification}{article}{
      author={Mundet~i Riera, I.},
      author={Tian, G.},
       title={A compactification of the moduli space of twisted holomorphic maps},
        date={2009},
        ISSN={0001-8708},
     journal={Adv. Math.},
      volume={222},
      number={4},
       pages={1117\ndash 1196},
         url={https://doi.org/10.1016/j.aim.2009.05.019},
      review={\MR{2554933}},
}

\bib{Riera2003Hamiltonian}{article}{
      author={Mundet~i Riera, Ignasi},
       title={Hamiltonian {G}romov-{W}itten invariants},
        date={2003},
        ISSN={0040-9383},
     journal={Topology},
      volume={42},
      number={3},
       pages={525\ndash 553},
         url={https://doi.org/10.1016/S0040-9383(02)00023-X},
      review={\MR{1953239}},
}

\bib{Price1983Monotonicity}{article}{
      author={Price, Peter},
       title={A monotonicity formula for {Y}ang-{M}ills fields},
        date={1983},
        ISSN={0025-2611},
     journal={Manuscripta Math.},
      volume={43},
      number={2-3},
       pages={131\ndash 166},
         url={https://doi.org/10.1007/BF01165828},
      review={\MR{707042}},
}

\bib{SacksUhlenbeck1981existence}{article}{
      author={Sacks, J.},
      author={Uhlenbeck, K.},
       title={The existence of minimal immersions of {$2$}-spheres},
        date={1981},
        ISSN={0003-486X},
     journal={Ann. of Math. (2)},
      volume={113},
      number={1},
       pages={1\ndash 24},
         url={https://doi.org/10.2307/1971131},
      review={\MR{604040}},
}

\bib{Schwarz1995Hodge}{book}{
      author={Schwarz, G\"{u}nter},
       title={Hodge decomposition---a method for solving boundary value problems},
      series={Lecture Notes in Mathematics},
   publisher={Springer-Verlag, Berlin},
        date={1995},
      volume={1607},
        ISBN={3-540-60016-7},
         url={https://doi.org/10.1007/BFb0095978},
      review={\MR{1367287}},
}

\bib{Song2011Critical}{article}{
      author={Song, Chong},
       title={Critical points of {Y}ang-{M}ills-{H}iggs functional},
        date={2011},
        ISSN={0219-1997},
     journal={Commun. Contemp. Math.},
      volume={13},
      number={3},
       pages={463\ndash 486},
         url={https://doi.org/10.1142/S0219199711004403},
      review={\MR{2813498}},
}

\bib{Song2016Convergence}{article}{
      author={Song, Chong},
       title={Convergence of {Y}ang-{M}ills-{H}iggs fields},
        date={2016},
        ISSN={0025-5831},
     journal={Math. Ann.},
      volume={366},
      number={1-2},
       pages={167\ndash 217},
         url={https://doi.org/10.1007/s00208-015-1321-x},
      review={\MR{3552237}},
}

\bib{TianYang2002Compactification}{article}{
      author={Tian, Gang},
      author={Yang, Baozhong},
       title={Compactification of the moduli spaces of vortices and coupled vortices},
        date={2002},
        ISSN={0075-4102},
     journal={J. Reine Angew. Math.},
      volume={553},
       pages={17\ndash 41},
         url={https://doi.org/10.1515/crll.2002.097},
      review={\MR{1944806}},
}

\bib{Uhlenbeck1982Connections}{article}{
      author={Uhlenbeck, Karen~K.},
       title={Connections with {$L\sp{p}$} bounds on curvature},
        date={1982},
        ISSN={0010-3616},
     journal={Comm. Math. Phys.},
      volume={83},
      number={1},
       pages={31\ndash 42},
         url={http://projecteuclid.org/euclid.cmp/1103920743},
      review={\MR{648356}},
}

\bib{Wente1969Existence}{article}{
      author={Wente, Henry~C.},
       title={An existence theorem for surfaces of constant mean curvature},
        date={1969},
        ISSN={0022-247X},
     journal={J. Math. Anal. Appl.},
      volume={26},
       pages={318\ndash 344},
         url={https://doi.org/10.1016/0022-247X(69)90156-5},
      review={\MR{243467}},
}

\bib{Zhang2004Compactness}{article}{
      author={Zhang, Xi},
       title={Compactness theorems for coupled {Y}ang-{M}ills fields},
        date={2004},
        ISSN={0022-247X},
     journal={J. Math. Anal. Appl.},
      volume={298},
      number={1},
       pages={261\ndash 278},
         url={https://doi.org/10.1016/j.jmaa.2004.04.057},
      review={\MR{2086545}},
}

\end{biblist}
\end{bibdiv}
\bigskip
\end{document}